\documentclass[a4paper]{article}

\usepackage{latexsym,amsthm,amscd}
\usepackage[all]{xy}
\usepackage{rotating}
\usepackage[utf8]{inputenc}
\usepackage[T1]{fontenc}
\usepackage[english]{babel}
\usepackage{amsmath}
\usepackage{graphicx}
\usepackage{amssymb}
\usepackage{amsfonts}
\usepackage[svgnames]{xcolor}
\usepackage{geometry}
\usepackage{fancyhdr}
\usepackage{lastpage} 
\usepackage{array}
\usepackage{stmaryrd}
\usepackage{hyperref}
\usepackage{tikz}
\usepackage{bm}
\usepackage[french]{minitoc}
\usepackage{silence}
\usepackage{enumitem}
\usepackage{float}

\usetikzlibrary{patterns}
\usepgflibrary{decorations.pathmorphing}
\usetikzlibrary{decorations.pathmorphing}
\usetikzlibrary{shadows}
\usetikzlibrary{positioning}
\usetikzlibrary{arrows,automata}


\WarningFilter{minitoc(hints)}{W0023}
\WarningFilter{minitoc(hints)}{W0028}
\WarningFilter{minitoc(hints)}{W0030}

\WarningFilter{blindtext}{}


\setcounter{secnumdepth}{3}



\geometry{hmargin=3cm, vmargin= 3.5cm}

\pagestyle{fancy}
\lhead{{\bf {}}}
\chead{}
\rhead{}
\lfoot{}
\cfoot{\thepage} 
\rfoot{}


\newtheorem*{rep@theorem}{\rep@title}
\newcommand{\newreptheorem}[2]{%
\newenvironment{rep#1}[1]{%
 \def\rep@title{#2 \ref{##1}}%
 \begin{rep@theorem}}%
 {\end{rep@theorem}}}

\newtheorem{theorem}{Theorem}
\newtheorem{proposition}[theorem]{Proposition}
\newtheorem{definition}[theorem]{Definition}

\newtheorem{corollary}[theorem]{Corollary}
\newtheorem{lemma}[theorem]{Lemma}
\newtheorem{example}[theorem]{Example}

\newtheorem{remark}{Remark}

\newreptheorem{theorem}{Theorem}
\newreptheorem{lemma}{Lemma}


\title{On the computability properties of topological entropy: a general approach}

\author{Silvère Gangloff, Alonso Herrera, Cristobal Rojas and Mathieu Sablik}






\def\N{\mathbb N}

\def\Z{\mathbb Z}
\def\Q{\mathbb Q}

\renewcommand{\vec}[1]{\textbf{#1}}

\newcommand{\define}[1]{\textbf{#1}}

\newcommand{\cbra}[1]{\left\{ #1 \right\}}

\begin{document}

\maketitle

\begin{abstract}
The dynamics of symbolic systems, such as multidimensional subshifts of finite type or cellular automata, 
are known to be closely related to computability theory. In particular, the appropriate tools to describe and classify topological entropy for this kind of systems turned out to be of computational nature. Part of the great importance of these symbolic systems relies on the role they have played in understanding more general systems over non-symbolic spaces. The aim of this article is to investigate topological entropy from a computability point of view in this more general, not necessarily symbolic setting. In analogy to effective subshifts, we consider computable maps over effective compact sets in general metric spaces, and study the computability properties of their topological entropies. We show that even in this general setting, the entropy is always a $\Sigma_2$-computable number. We then study how various dynamical and analytical constrains affect this upper bound, and prove that it can be  lowered in different ways depending on the constraint considered. In particular, we obtain that all $\Sigma_2$-computable numbers can already be realized within the class of surjective computable maps over $\{0,1\}^{\N}$, but that this bound decreases to $\Pi_{1}$(or upper)-computable numbers when restricted to expansive maps.  On the other hand, if we change the geometry of the ambient space from the symbolic  $\{0,1\}^{\N}$ to the unit interval $[0,1]$, then we find a quite different situation --  we show that the possible entropies of computable systems over $[0,1]$ are exactly the $\Sigma_{1}$(or lower)-computable numbers and that this characterization switches down to precisely the computable numbers when we restrict the class of system to the quadratic family.  
\end{abstract}

\tableofcontents


\section{Introduction}

Introduced by Adler, Konheim and McAndrew in the 60's \cite{Adler-Konheim-McAndrew-1965}, topological entropy has become (one of) the most important topological invariants in dynamical systems theory. It expresses   the exponential growth rate of the number of distinguishable orbits the system can create under iteration. Topological entropy has many important applications, specially as a measure of the degree of disorder of a system. It is for instance widely used in the theoretical description of the transition to chaos. As pointed out by Milnor \cite{Mil02}, it is natural to wonder whether topological entropy can be \textit{effectively computed}: 

\bigskip
\noindent \textit{given an explicit (finitely described) dynamical system and given $\epsilon>0$, is it possible to compute the associated topological  entropy with a maximum error of $\epsilon$? }
\medskip

In the most general case the answer to this problem is known to be negative. For example, it was proven in \cite{Hurd-Kari-Culik-1992} that for the class of cellular automata, there is no algorithm to compute (or even approximate) the topological entropy, and that even the problem of determining whether a cellular automaton has zero entropy, is undecidable. Similar results have been obtained, for instance, for the class of iterated piecewise-affine maps~\cite{Koiran-2001} in compact spaces of dimension $d\geq 4$.  

On the other hand, it is quite clear that the entropy of a (finitely described) dynamical system is always in the arithmetical hierarchy of real numbers studied in~\cite{Zheng-Weihrauch-2001}, and this leads to the formulation of the following:

\bigskip
\noindent\textbf{Problem 1}:  \textit{classify classes of dynamical systems according to the exact position their entropies have in the arithmetical hierarchy. }
\medskip

Let us briefly recall this hierarchy. A real number $h$ is $\Sigma_1$ (or \textit{lower-computable}) if it is the supremum of a computable sequence of rationals. Simetrically, $h$ is $\Pi_1$ (or \textit{upper-computable}) if $-h$ is $\Sigma_1$. Higher positions in this hierarchy are defined inductively:  $\Sigma_n$ numbers are suprema of computable sequences of $\Pi_{n-1}$ numbers and $\Pi_{n}$ numbers are infima of computable sequences of $\Sigma_{n-1}$ numbers. The lowest position in this hierarchy is occupied by the \textit{computable} real numbers: those being simultaneously lower and upper computable.

The arithmetical hierarchy is thus a classification of real numbers according to their effective descriptive complexity, which is in turn related to their degrees of unsolvability \cite{unsovability}. Characterizing entropies within this hierarchy can thus be seen as shedding some light into the computational capabilities of the class of dynamical systems considered. A form of this question for the class of cellular automata and the class of piecewise-affine maps was first stated in \cite{Koiran-2001}.  

In a major breakthrough \cite{Hochman-2009}, a solution to Problem 1 was given for the class of cellular automata in dimension $d\geq 3$: their possible entropies are exactly the $\Sigma_2$-computable real numbers. It was later shown \cite{Guillon-Zinoviadis-2012} that the same characterization holds for cellular automata in dimension $d=2$, but that when $d=1$ the complexity of entropies for this class decreases to exactly the  $\Pi_1$-computable positive reals.  

In this article we propose to study Problem 1 in a more systematic way by considering general \textit{computable} dynamical systems as presented in~\cite{Galatolo-Hoyrup-Rojas-2011}.  A computable dynamical system can be thought of as one for which one has a computer program that can simulate the system in the sense that individual trajectories can be computed to any desired degree of accuracy. The associated computer program can then be thought of as a finite description of the system, and therefore the question of algorithmic computability of the associated entropy makes sense. In this setting, we solve Problem 1 for different classes of dynamical systems. Our main results are the following.

\medskip

\noindent\textbf{Theorem A}. \textit{The topological entropy of a general computable dynamical system is always a $\Sigma_2$-computable real number.  
}

\medskip

Thus, we have a definite upper bound applicable to all computable dynamical systems over general metric spaces. Note that in particular, Theorem A tells us that the class of 2D cellular automata is \emph{entropy-complete} in the sense that their entropies get as complex as they can be, exhibiting the full spectrum of all $\Sigma_{2}$-computable numbers. On the other hand, we also know that 1D cellular automata are not complete in this sense, and it is natural to ask whether the more general class of 1D computable dynamical systems can be complete. We answer this question in the affirmative, even within the class of systems that are constrained to be surjective.  

\medskip

\noindent\textbf{Theorem B}.  \textit{Every positive $\Sigma_{2}$-computable real number is the topological entropy of a surjective computable map over $\{0,1\}^{\N}$.} 
\medskip

We then analyse the effect of changing the geometry of the ambient space and study Problem 1 for computable systems over $[0,1]$. As shown by the next result, the situation in this space is very different: in particular, not even a single $\Sigma_{2}$-computable number can be realised within this class. 
 
 \medskip
 
 \noindent\textbf{Theorem C}. \textit{The possible entropies of computable systems over $[0,1]$ are exactly the $\Sigma_{1}$-computable numbers.}

\medskip
 
 Finally, we consider the seemingly very restricted class of quadratic maps. More precisely, we consider the so called logistic family given by $f_{r}(x) = rx(1-x)$, with $r\in [0,4]$. This is a very well studied family with an enormous amount of available theory. We apply some of the available tools to show that the possible entropies for this family are exactly the computable numbers. 

\medskip

 \noindent\textbf{Theorem D}. \textit{A real number $h\in[0,\log(2)]$ is computable if and only if it is the entropy of a computable logistic map}.  

\medskip

The present work can be seen as part of a recent research trend in which dynamical systems are studied from a computational complexity point of view \cite{BY, BBRY, BRS, Lorenz,  Kawa, DudYam2, Aubrun-Sablik-2010, Moore1, Kurka, burr_schmoll_wolf, SpectralGap}.  The general idea is to understand how the computability properties of the main invariants that describe a given system are related to its dynamical, geometrical and analytical properties. In particular, a major goal is to characterize the properties that make these invariants \emph{easier} to compute.  The techniques developed for these results usually involve delicate constructions allowing to see the dynamical system as a ``computing device'', which can then be ``programmed'' to exhibit some particular behavior. Our Theorems B and C will implement such constructions.

After some preliminaries on computable analysis, we introduce computable dynamical systems in Section \ref{sec.comp.general}. Section \ref{sec.growth.type.metric} is devoted to the proof of Theorem A. In Section \ref{section.symbolic.dynamics} we study Problem 1 for systems on Cantor sets, in particular proving Theorem B.  Finally, the proofs of Theorems C and D are presented in Section \ref{section.continuous.dynamics}.


\section{ \label{sec.comp.general} Preliminaries}

\subsection{A bit of computable analysis.}

In this section, we recall notions from
computable analysis on metric spaces. 
All the results presented here are well known. 
For a reference on computable analysis, 
see~\cite{Brattka}.  In the following we will make use of the word \textit{algorithm} to mean a computer program written in any standard programming language or, more formally, a Turing Machine. Algorithms are assumed to be only capable of manipulating integers. By identifying countable sets  with integers in a constructive way, we can let algorithms work on these countable sets as well.  For example, algorithms can manipulate rational numbers by identifying each $p/q$ with some integer $n$ in such a way that both $p$ and $q$ can be computed from $n$, and vice-versa. We fix such a numbering from now on.

\subsubsection{Computable metric spaces.}

\begin{definition}
A \textbf{computable metric space} is a triple $(X,d,\mathcal{S})$, where $(X,d)$ is a metric space 
and $\mathcal{S} = \{s_i : i \ge 0\}$ a countable dense subset of $X$, 
whose elements are called \textbf{ideal points}, 
such that there exists an algorithm 
which, upon input $(i,j,n) \in \N^{3}$, 
outputs  $r\in \Q$ such that 
$$|d(s_i,s_j)-r| \le 2^{-n}.$$
We say that the distances between ideal points 
are uniformly computable. 
\end{definition}

For $r>0$ a 
rational number and $x$ an element of $X$,
we denote $B(x,r) = \{ z \in X \ : \ d(z,x)<r\}$, 
the ball with center $x$ and radius $r$.
The balls centered on elements of 
$\mathcal{S}$ with rational radii are called \textbf{ideal balls}. A computable enumeration of the ideal balls $B_{n}=B(s^{(n)},r^{(n)})$ can be obtained by taking for instance a bi-computable bijection $\varphi:\N \rightarrow 
\N \times \Q$ and letting $s^{(n)} = s_{\varphi_1 (n)}$ and $r^{(n)} = \varphi_2 (n)$, where $\varphi(n) = (\varphi_1 (n),\varphi_2 (n))$.  We fix such a computable enumeration from now on. For any 
subset $I$ (finite or infinite) 
of $\N$, we denote $\mathcal{U}_{I}$
the collection of ideal balls $B_n$ with $n \in I$, and 
$U_{I}$ the union of these balls: 
\[U_{I} = \bigcup_{n \in I} B_n.\]

\begin{example}The following are two important examples. 
\begin{itemize}
\item For a finite alphabet  $\mathcal{A}$   
and $\#$ one of its elements, the Cantor space $\mathcal{A}^{\N}$ with its usual 
metric has a natural computable metric space structure where the ideal points can be taken to be $\mathcal{S} = \{w\#^{\infty}: 
w \in \mathcal{A}^{*}\}$. In this case, the ideal balls are 
the cylinders. In the same way, spaces like $\mathcal{A}^{\Z^{d}}$ with $ d \ge 1$ can also be endowed with a natural computable metric space structure. 
\item The compact interval $[0,1]$, with its usual metric 
and $\mathcal{S}=\Q\cap [0,1]$ is also  
a computable metric space. The ideal balls here are the 
open intervals with rational endpoints. 
\end{itemize}
\end{example}

\begin{definition}
Let $(X,d,\mathcal{S})$ be a computable metric space
such that $(X,d)$ is compact.
$X$ is said to be \textbf{recursively compact} 
if the inclusion $$X \subset U_I,$$
where $I$ is some finite subset of $\N$, is semi-decidable. 
This means that there is an algorithm which, given  $I$ as input, halts if and only if 
the inclusion above is verified. 
\end{definition}

\begin{example}
The Cantor space and the compact interval 
are easily seen to be recursively compact.
\end{example}

\subsubsection{Computable closed subsets}

\begin{definition}
Let $(X,d,\mathcal{S})$ be a computable metric space.
A closed subset $K \subset X$ 
is said to be \textbf{effective} when there exists 
an algorithm
such that $$X \backslash K = U_I,$$
where $I$ is the set of integers 
on which the algorithm halts.
\end{definition}

Effective closed sets are also known as closed co-r.e. sets or upper computable. 
We will also make use of the following fact.

\begin{proposition}
\label{prop.equivalence.comp.closed.subset}
Let $(X,d,\mathcal{S})$ be a recursively compact
computable metric space.
A closed subset $K \subset X$ is  effective if and 
only if the inclusion
$$K \subset U_I \qquad \text{ where } I \subset \N \text{ is finite }$$ 
is semi-decidable.
\end{proposition}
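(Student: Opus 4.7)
The plan is to establish the two implications separately, both leaning on the compactness of $K$ (which is automatic, since $K$ is closed inside the compact $X$).

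For the direction $(\Rightarrow)$, I would suppose $K$ is effective, so $X \setminus K = U_L$ for some r.e. set $L$. Given a finite $I \subset \N$, the key equivalence
\[
K \subset U_I \quad\Longleftrightarrow\quad X \subset U_{I \cup L}
\]
reduces the task to semi-deciding the right-hand side: enumerate $L$ to produce growing finite prefixes $L_0 \subset L_1 \subset \cdots$ and, in parallel, invoke the recursive compactness of $X$ on each $I \cup L_n$. Compactness of $X$ guarantees that the procedure halts exactly when $K \subset U_I$, since any open cover of $X$ by ideal balls admits a finite subcover.

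For the direction $(\Leftarrow)$, I would start by upgrading the hypothesis: $K \subset U_J$ is in fact semi-decidable for every r.e. $J$, since enumerating finite $J' \subset J$ and running the original semi-decision procedure in parallel succeeds precisely when a finite subcover of $K$ exists inside $U_J$, which is guaranteed by compactness of $K$. I would then build an r.e. set $M$ witnessing effectivity of $K$ by dovetailing over all pairs $(s_i, q) \in \mathcal{S} \times \Q_{>0}$ and semi-deciding, for each pair, whether $K \subset \{x : d(x, s_i) > q\}$. The open set on the right is an effectively enumerable union of ideal balls (the inclusion $B(s_j, \rho) \subset \{x : d(x, s_i) > q\}$ reduces to the r.e.\ condition $d(s_i, s_j) > q + \rho$), so the upgraded hypothesis applies. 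Whenever the check succeeds, I enroll the index of $B(s_i, q)$ into $M$. Correctness is routine: enrolled balls are disjoint from $K$, and conversely, if $x \notin K$ with $\delta = d(x, K) > 0$, picking $s_i$ within $\delta/3$ of $x$ and rational $q \in (d(s_i, x), 2\delta/3)$ certifies that $x \in B(s_i, q) \subset X \setminus K$ and that the pair $(s_i, q)$ eventually gets enrolled.

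The main subtlety I expect lies in the $(\Leftarrow)$ direction: one cannot hope to semi-decide $B_m \cap K = \emptyset$ directly, since this amounts to the non-strict condition $d(K, s_m) \geq r_m$, which is merely $\Pi_1$ in general. The workaround is to never test an ideal ball against its own radius, but instead to construct $U_M$ out of \emph{shrunken witnesses} $B(s_i, q)$ produced whenever a strict separation of $K$ from the closed ball $\overline{B(s_i, q)}$ can be certified, relying on the fact that every point outside $K$ admits such a witness by a standard compactness argument.
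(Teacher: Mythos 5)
Your argument is correct and complete in both directions; the paper itself states Proposition~\ref{prop.equivalence.comp.closed.subset} without proof, as a standard fact of computable analysis, so there is no in-text argument to diverge from. Your proof is essentially the canonical one -- reducing $K\subset U_I$ to $X\subset U_{I\cup L}$ for the forward direction, and using strictly separated ``shrunken'' witness balls (to avoid the merely $\Pi_1$ condition $B_m\cap K=\emptyset$) for the converse -- and the subtlety you flag is exactly the right one.
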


\begin{proposition}
Let $(X,d,\mathcal{S})$ 
be a recursively 
compact metric space. 
The inclusion $\overline{B_n} 
\subset B_m$ is semi-decidable.
\end{proposition}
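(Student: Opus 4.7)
\noindent\textit{Proof plan.}
The plan is to produce an algorithm that, on input $(n,m)$, halts exactly when $\overline{B_n}\subset B_m$. The main workhorse will be the formal triangle-inequality criterion
\[
d\bigl(s^{(n)},s^{(m)}\bigr) + r^{(n)} < r^{(m)},
\]
which I will show is sufficient for $\overline{B_n}\subset B_m$ and semi-decidable. For any $y\in\overline{B_n}$ there is a sequence $(y_k)\subset B_n$ with $y_k\to y$, so continuity of $d(\cdot,s^{(n)})$ yields $d(y,s^{(n)})\le r^{(n)}$, and then the triangle inequality gives $d(y,s^{(m)}) \le d(y,s^{(n)}) + d(s^{(n)},s^{(m)}) < r^{(m)}$, so $y\in B_m$. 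Semi-decidability follows from the definition of computable metric space: we enumerate rational approximations of $d(s^{(n)},s^{(m)})$ to precision $2^{-k}$ (uniformly in $(n,m,k)$), and halt as soon as the approximation window certifies the strict inequality against the rationals $r^{(n)},r^{(m)}$.

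To upgrade this into a complete semi-decision (so that the algorithm halts on \emph{every} pair for which $\overline{B_n}\subset B_m$ actually holds, not only on those witnessed by strict formal containment), I would invoke the recursive compactness of $X$. From $\overline{B_n}\subset B_m$, compactness of $\overline{B_n}$ and openness of $B_m$ give a positive gap $d(\overline{B_n},X\setminus B_m)>0$, and then density of $\mathcal{S}$ allows one to cover $\overline{B_n}$ by finitely many ideal balls $\{B_k\}_{k\in F}$ each formally contained in $B_m$, and to cover $X\setminus\overline{B_n}$ by finitely many ideal balls $\{B_\ell\}_{\ell\in F'}$ each formally disjoint from $B_n$ in the sense
\[
d\bigl(s^{(\ell)},s^{(n)}\bigr) > r^{(\ell)} + r^{(n)}.
\]
The union $F\cup F'$ then covers $X$. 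Conversely, any such pair $(F,F')$ forces $\overline{B_n}\subset B_m$: an $x\in\overline{B_n}$ lies in some $B_k$ with $k\in F\cup F'$; membership in some $B_\ell$ with $\ell\in F'$ is ruled out because $B_\ell$ would be an open neighbourhood of $x$ disjoint from $B_n$, contradicting $x\in\overline{B_n}$; hence $k\in F$ and $x\in B_m$.

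The algorithm then enumerates all finite pairs $(F,F')$ and checks the three semi-decidable conditions in parallel: (i) $X\subset\bigcup_{i\in F\cup F'}B_i$ is semi-decidable by recursive compactness of $X$, while (ii) and (iii) are semi-decidable from uniform computability of the distances between ideal points. It halts at the first valid certificate.

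I expect the delicate step to be establishing enough density of the formally disjoint balls in $X\setminus\overline{B_n}$: for $x\notin\overline{B_n}$ one needs an ideal ball around $x$ satisfying the strict separation $d(s^{(\ell)},s^{(n)}) > r^{(\ell)} + r^{(n)}$, which by the triangle inequality requires room to place a small ideal centre near $x$ with strictly $d(x,s^{(n)})>r^{(n)}$; this follows because $x$ has a neighbourhood missing $B_n$, hence $s^{(n)}\in\overline{B_n}$ lies at positive distance from that neighbourhood. Combined with recursive compactness this closes the converse and completes the semi-decision.
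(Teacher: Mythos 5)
Your route differs in form from the paper's: the paper simply observes that $\overline{B_n}$ is an effective closed set and invokes Proposition~\ref{prop.equivalence.comp.closed.subset}, whereas you inline the compactness argument, searching for a finite cover of $X$ by ideal balls split into a family $F$ formally contained in $B_m$ and a family $F'$ formally disjoint from $B_n$. The soundness direction (a valid certificate forces $\overline{B_n}\subset B_m$) and the semi-decidability of your three conditions are fine. The gap is in the completeness direction, precisely at the step you yourself flag as delicate. If $\overline{B_n}$ denotes the topological closure of the open ball, then a point $x\notin\overline{B_n}$ only satisfies $d(x,s^{(n)})\ge r^{(n)}$, not the strict inequality you assert; ``$x$ has a neighbourhood missing $B_n$'' does not push $x$ strictly outside the closed ball. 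When $d(x,s^{(n)})=r^{(n)}$ exactly, no ideal ball $B_\ell\ni x$ can satisfy $d(s^{(\ell)},s^{(n)})>r^{(\ell)}+r^{(n)}$, since the triangle inequality gives $d(s^{(\ell)},s^{(n)})\le d(s^{(\ell)},x)+d(x,s^{(n)})<r^{(\ell)}+r^{(n)}$. Concretely, take $X=\{0,1,2\}\subset\R$ with $\mathcal{S}=X$, $B_n=B(0,1)=\{0\}$ and $B_m=B(0,1/2)=\{0\}$: here $\overline{B_n}=\{0\}\subset B_m$, but the point $1$ lies in no ideal ball that is formally inside $B_m$ and in no ideal ball that is formally disjoint from $B_n$, so no certificate $(F,F')$ exists and your algorithm never halts on this positive instance.

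The repair is to run your argument for the closed ball $\{z: d(z,s^{(n)})\le r^{(n)}\}$ rather than for the topological closure of the open ball: every point of its complement $\{z:d(z,s^{(n)})>r^{(n)}\}$ does admit a formally disjoint ideal ball, and the rest of your proof then goes through verbatim. This is also, implicitly, what the paper's one-line proof does: the set whose complement is an effective union of ideal balls (via the semi-decidable condition $d(s,s^{(n)})>r+r^{(n)}$) is the closed ball, not the closure. Since the open ball is contained in the closed ball, this weaker but genuinely semi-decidable inclusion is all that is needed in the paper's later applications.
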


\begin{proof}
Note that $\overline{B_n}$ is effective and apply Proposition~\ref{prop.equivalence.comp.closed.subset}.
\end{proof}

\subsubsection{Computable functions}

\begin{definition}
Let $(X,d)$ and $(X',d')$ be computable metric spaces. Let us denote $(B'_m)_m$ 
an enumeration of ideal balls of $X'$.
A function $f : X \rightarrow X'$ is 
\textbf{computable} if there exists an algorithm 
which, given as input some integer $m$, 
enumerates a set $I_m$ 
such that $$f^{-1} (B'_m) = U_{I_m}.$$
\end{definition}

It follows that computable functions are continuous. It is perhaps more intuitively familiar, and provably equivalent, to think of a computable function as one for which there is an algorithm which, provided with arbitrarily good approximations of $x$, outputs arbitrarily good approximations of $f(x)$.  In symbolic spaces, this can easily be made precise:

\begin{example}
In Cantor space $\mathcal{A}^{\N}$, a function 
$f : \mathcal{A}^{\N} \rightarrow \mathcal{A}^{\N}$ is computable if and only if there exists some non decreasing 
computable function $\varphi : \N \rightarrow \N$ together with an algorithm which, provided with the $\varphi(n)$ first symbols of the sequence $x$, computes the $n$th first symbols of the sequence $f(x)$. 
\end{example}

For $X$ a compact computable
metric space, we consider the product space $X\times X$ with the 
distance $d^2$ defined for all $(x,y), 
(x',y') \in X 
\times X$ by 
\[d^2 ((x,y),(x',y')) = \max (d(x,y),d(x',y')).\]
The space $(X ^2 , d^2 , \mathcal{S}^2)$ 
is a compact computable metric space.
The ideal balls of this space are the 
products of ideal balls of $X$.

\subsection{Computable dynamical systems and topological entropy}

Throughout this section $(X,d)$ will denote a compact metric space. We will assume a computable structure on $X$ whenever necessary.  

\begin{definition}\label{computable-dynamics}
A \textbf{dynamical system} on $X$ is a pair $(K,f)$, where $f : X \rightarrow X$ is
a continuous function and $K$ 
is a compact subset of $X$ such that $f(K)\subset K$.  A dynamical system is \textbf{computable} whenever $f$ is computable and $K$ is effective. 
\end{definition}

We very briefly recall the definition of topological entropy and state some of its elementary 
properties. We refer the reader to \cite{Adler-Konheim-McAndrew-1965} for more details. 
Let $(X,d)$ be a metric space and $K$ be a compact subset of $X$. A \textbf{cover} of $K$ is a 
collection $\mathcal{U}$ of open subsets of $X$ such  that 
\[K \subset \bigcup_{U \in \mathcal{U}} U.\] 

A \define{sub-cover} of a
cover $\mathcal{U}$ is a subset of 
this set which is also a cover. Let $\mathcal{U}$ and $\mathcal{U}'$ be two 
collections of open subsets of $X$. 
We say that $\mathcal{U}$ is 
\define{thinner} than $\mathcal{U}'$ when for all 
$U \in \mathcal{U}$, there is some $V \in \mathcal{U}'$ 
such that $U \subset V$. We denote this relation $\mathcal{U}'\prec \mathcal{U}$.
For a pair of open covers $\mathcal{U}$ and 
$\mathcal{U}'$, their \define{join}  $\mathcal{U}\vee\mathcal{U'}$ is defined by 
\[\mathcal{U}\vee\mathcal{U}'=\{U\cap U' , U \in \mathcal{U}, \ U' \in \mathcal{U}'\}.\]

We remark that for a continuous function $X \rightarrow X$,
$K$ a compact subset of $X$ and an 
open cover $\mathcal{U}$ of $K$, the set 
\[f^{-1} (\mathcal{U}) : = \{ f^{-1} (U) \ : \ U \in \mathcal{U}\}\] is an open cover.

\begin{definition}
The \textbf{entropy} of a 
dynamical system $(K,f)$ of $X$
\define{relative to an open cover} 
$\mathcal{U}$ is defined by
\[h(K,f,\mathcal{U})=\inf_n\frac{\log_2 
\left(N_n\left(K,f,\mathcal{U}\right)\right)}{n}\]
where $N_n\left(K,f,\mathcal{U}\right)$ is 
the minimal cardinality 
of a sub-cover of \[\bigvee_{k \le n-1} f^{-k} (\mathcal{U}).\]
\end{definition}

\begin{remark}
It is a well known fact that 
this infimum 
is also a limit. This fact relies 
on the sub-additivity of 
the sequence $\left(N_n\left(K,f,
\mathcal{U}\right)\right)_{n\in\N}$. 
\end{remark}

\begin{lemma} 
If $\mathcal{U}$ and $\mathcal{U}'$ are two open covers 
such that $\mathcal{U} \prec \mathcal{U}'$, then 
$h(K,f,\mathcal{U}) \ge h(K,f,\mathcal{U}')$.
\end{lemma}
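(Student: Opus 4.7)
The plan is to reduce the inequality between entropies to an inequality between the minimal sub-cover cardinalities $N_n(K,f,\mathcal{U})$ and $N_n(K,f,\mathcal{U}')$ for each fixed $n$; dividing by $n$ and passing to the infimum then yields the comparison of entropies. The cardinality inequality will in turn be obtained by propagating the hypothesis $\mathcal{U} \prec \mathcal{U}'$ through the two operations used in the definition of $N_n$, namely the preimage by $f$ and the finite join.

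First, I would verify that the relation $\prec$ is preserved under each of these operations. The preimage case is immediate from the fact that $f^{-1}$ preserves inclusions, so applying it elementwise to the defining condition of $\prec$ shows that $f^{-k}\mathcal{U}$ and $f^{-k}\mathcal{U}'$ are in the same $\prec$-relation for every $k$. For the join, the elementary observation is that $A \subset A'$ and $B \subset B'$ imply $A \cap B \subset A' \cap B'$; applied elementwise to collections, this shows that the joins of two $\prec$-related pairs are again $\prec$-related. Iterating this over $k = 0, \ldots, n-1$ transfers the hypothesis to a $\prec$-relation between $\bigvee_{k \le n-1} f^{-k}\mathcal{U}$ and $\bigvee_{k \le n-1} f^{-k}\mathcal{U}'$.

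Finally, a $\prec$-relation between two open covers of $K$ translates directly into an inequality of their minimal sub-cover cardinalities: starting from a minimal sub-cover of the finer collection, the assignment to each of its members of a containing element from the coarser collection produces a sub-cover of the latter of at most the same cardinality. Applied to the iterated joins above, this yields the required inequality on $N_n$, and hence on the entropies after passing to the infimum in $n$. The argument is entirely routine; the only point requiring attention is keeping track of the direction of $\prec$ consistently through each of the three reductions.
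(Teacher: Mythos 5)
The paper states this lemma without proof, presenting it as one of the elementary properties of topological entropy recalled from the literature, so there is no in-paper argument to compare against; your three-step reduction (stability of $\prec$ under $f^{-k}$, stability under finite joins, and the passage from a $\prec$-relation between covers to an inequality between minimal sub-cover cardinalities) is exactly the standard proof, and each individual step is sound. The one point you explicitly deferred --- ``keeping track of the direction of $\prec$'' --- is, however, precisely where the difficulty sits, and you never resolve it. With the paper's convention ($\mathcal{U}\prec\mathcal{U}'$ means $\mathcal{U}'$ is the thinner cover, i.e.\ every element of $\mathcal{U}'$ lies inside some element of $\mathcal{U}$), your final step takes a minimal sub-cover of the finer collection $\bigvee_{k\le n-1}f^{-k}(\mathcal{U}')$ and produces from it a sub-cover of the coarser collection $\bigvee_{k\le n-1}f^{-k}(\mathcal{U})$ of no greater cardinality; this gives $N_n(K,f,\mathcal{U})\le N_n(K,f,\mathcal{U}')$ for every $n$, hence $h(K,f,\mathcal{U})\le h(K,f,\mathcal{U}')$ --- the \emph{reverse} of the displayed inequality. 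The fault here lies with the lemma as printed rather than with your mechanism: the way the lemma is later invoked in the proof of Theorem 1 (``$\mathcal{U}_I$ \dots is therefore thinner than $\mathcal{U}$, thus $h(K,f,\mathcal{U})\le h(K,f,\mathcal{U}_I)$'') uses exactly the inequality your argument delivers, so the printed $\ge$ (equivalently, the orientation of $\prec$ in the hypothesis) is a typo. To make your proof complete you must commit to a direction, carry it through all three reductions, and state explicitly that what is being proved is that the thinner cover has the larger relative entropy; as written, your proposal establishes the correct and intended fact but not the statement as literally printed.
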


\begin{definition}
The \define{topological entropy} of a dynamical 
system $(K,f)$ 
is defined as
\[h(K,f)=\sup\left\{h(K,f,\mathcal{U}) \ 
| \ \mathcal{U} \textrm{ 
finite open cover of }K\right\}.\]
\end{definition}

When the set $K$ is clear from the context, we will simply write $h(f)$. We recall that two dynamical systems $(K,f)$ and $(K',g)$ are said to be conjugated whenever there exists a homeomorphism $\phi:K\to K'$ such that $g\circ \phi = \phi \circ f $ over $K$.  It is well known that conjugated systems have the same topological entropy. 

\begin{lemma}
Let $(K,f)$ be a dynamical 
system in $X$. Then, for all $n \ge 1$, the equality $h(f^n)=nh(f)$ holds.
\end{lemma}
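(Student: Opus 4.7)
The plan is to prove the identity at the level of a single cover and then take suprema. The key object is, for a finite open cover $\mathcal{U}$ of $K$ and an integer $n \ge 1$, the refined cover
\[
\mathcal{U}^{(n)} \;=\; \bigvee_{k \le n-1} f^{-k}(\mathcal{U}).
\]
My first step is the combinatorial identity
\[
\bigvee_{k \le m-1} (f^n)^{-k}\bigl(\mathcal{U}^{(n)}\bigr)
\;=\; \bigvee_{k \le mn-1} f^{-k}(\mathcal{U}),
\]
which follows directly from the definitions together with the relation $(f^n)^{-k} = f^{-nk}$. From this identity one immediately reads off $N_m(K,f^n,\mathcal{U}^{(n)}) = N_{mn}(K,f,\mathcal{U})$, because both sides count the minimal cardinality of a subcover of the same collection.

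From there, taking the limit in the definition of entropy relative to a cover yields
\[
h(K,f^n,\mathcal{U}^{(n)})
\;=\; \lim_{m\to\infty}\frac{\log_2 N_{mn}(K,f,\mathcal{U})}{m}
\;=\; n\,\lim_{m\to\infty}\frac{\log_2 N_{mn}(K,f,\mathcal{U})}{mn}
\;=\; n\,h(K,f,\mathcal{U}).
\]
Taking the supremum over all finite open covers $\mathcal{U}$ of $K$ on the right, and noting that $\mathcal{U}^{(n)}$ is itself a finite open cover of $K$ so appears on the left side of $h(f^n)$, gives the inequality $h(f^n) \ge n\,h(f)$.

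For the reverse inequality I want to bound $h(K,f^n,\mathcal{V})$ for an arbitrary finite open cover $\mathcal{V}$ of $K$. The observation is that $\mathcal{V}^{(n)}$ refines $\mathcal{V}$: each element of $\mathcal{V}^{(n)}$ is an intersection whose first factor lies in $\mathcal{V}$, so $\mathcal{V} \prec \mathcal{V}^{(n)}$ in the notation of the excerpt. The monotonicity lemma then gives $h(K,f^n,\mathcal{V}) \le h(K,f^n,\mathcal{V}^{(n)})$, and combining with the first part we obtain
\[
h(K,f^n,\mathcal{V}) \;\le\; h(K,f^n,\mathcal{V}^{(n)}) \;=\; n\,h(K,f,\mathcal{V}) \;\le\; n\,h(f).
\]
Taking the supremum over $\mathcal{V}$ concludes $h(f^n) \le n\,h(f)$.

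The only step that requires genuine care is the combinatorial identity for the iterated joins, since it is where the dynamics of $f^n$ and $f$ are matched up; everything else is formal manipulation using the definition of entropy relative to a cover and the monotonicity lemma stated just above. No computability is used — this is a purely topological statement valid for any continuous $f$ on a compact set.
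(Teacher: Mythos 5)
The paper states this lemma without proof, so there is nothing to compare against; your argument is the standard one and it is correct. The join identity $\bigvee_{k \le m-1} (f^n)^{-k}\bigl(\mathcal{U}^{(n)}\bigr) = \bigvee_{k \le mn-1} f^{-k}(\mathcal{U})$, the resulting equality $N_m(K,f^n,\mathcal{U}^{(n)}) = N_{mn}(K,f,\mathcal{U})$, and the passage to the limit along the subsequence $mn$ (legitimate because the infimum in the definition is a limit, as the paper's remark on subadditivity guarantees) all check out, as does the refinement argument for the reverse inequality. One caution: the monotonicity lemma as literally printed in the paper ($\mathcal{U} \prec \mathcal{U}'$ implies $h(K,f,\mathcal{U}) \ge h(K,f,\mathcal{U}')$) has its inequality reversed relative to the paper's own definition of ``thinner'' and to how the lemma is actually used in the proof of Theorem \ref{theorem.obstruction}; the direction you use --- a thinner (refining) cover has larger entropy, so $\mathcal{V} \prec \mathcal{V}^{(n)}$ gives $h(K,f^n,\mathcal{V}) \le h(K,f^n,\mathcal{V}^{(n)})$ --- is the mathematically correct one.
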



\section{\label{sec.growth.type.metric}General obstructions and entropy-complete systems.}


Let us recall that a real number $x$ is $\Sigma_{2}$-computable if there is an algorithm that computes a double sequence $(q_{ij})$ of rationals satisfying $$x=\sup_{i}\inf_{j} q_{ij}.$$
The aim of this section is to prove 
the following theorem: 

\begin{theorem}\label{theorem.obstruction}
Let $(X,d,\mathcal{S})$ be a recursively compact metric 
space and $(K,f)$ be a computable 
dynamical system in $X$. Then the topological entropy $h(K,f)$ is a $\Sigma_2$-computable number.
\end{theorem}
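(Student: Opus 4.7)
The plan is to produce an explicit $\Sigma_2$-representation $h(K,f) = \sup_i \inf_j q_{ij}$ with $(q_{ij})$ a computable double sequence of rationals. First I would reduce the outer supremum in the definition of $h(K,f)$ to finite open covers of $K$ consisting of ideal balls. The lemma stating that $h(K,f,\mathcal{U}) \le h(K,f,\mathcal{U}')$ whenever $\mathcal{U}'$ is thinner than $\mathcal{U}$, combined with compactness of $K$ and density of the ideal points, ensures that any finite open cover of $K$ admits a finer cover by ideal balls, so the restriction does not change the supremum. Let $(\mathcal{U}_i)_{i \ge 0}$ be the natural computable enumeration of finite collections of ideal balls; by Proposition~\ref{prop.equivalence.comp.closed.subset}, since $K$ is effective in a recursively compact space, the predicate ``$\mathcal{U}_i$ is a cover of $K$'' is semi-decidable uniformly in $i$.

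Next I would show that, uniformly in $i$, $h(K,f,\mathcal{U}_i)$ is a $\Pi_1$-computable real whenever $\mathcal{U}_i$ covers $K$. Since $f$ is computable, each preimage $f^{-\ell}(B)$ of an ideal ball $B$ is a c.e.\ union of ideal balls, and a finite intersection of such sets remains a c.e.\ union of ideal balls (using the uniformly semi-decidable inclusion $\overline{B_m} \subset B_p$). Thus each element of the join $\bigvee_{k \le n-1} f^{-k}(\mathcal{U}_i)$ is presented effectively as a c.e.\ open set. A mild extension of Proposition~\ref{prop.equivalence.comp.closed.subset} -- replacing finite $I$ by a c.e.\ index set, which is legitimate because $K$ is compact so any c.e.\ open cover admits a finite sub-cover -- gives that the predicate ``a given finite union of c.e.\ opens covers $K$'' is semi-decidable. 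Consequently, for each pair $(n,m)$, the predicate ``$N_n(K,f,\mathcal{U}_i) \le m$'' is semi-decidable by enumerating size-$m$ sub-collections of the join. This makes $N_n(K,f,\mathcal{U}_i)$ upper-semi-computable, and therefore $h(K,f,\mathcal{U}_i) = \inf_n \log_2 N_n(K,f,\mathcal{U}_i)/n$ is upper-semi-computable, yielding a uniformly computable sequence of decreasing rational upper bounds $(a^{(i)}_j)_j$ with limit $h(K,f,\mathcal{U}_i)$.

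Finally I would assemble the $\Sigma_2$-form. Dovetailing, I set $q_{ij} = 0$ until stage $j$ certifies that $\mathcal{U}_i$ covers $K$, and thereafter set $q_{ij} = a^{(i)}_j$. Because $h(K,f,\mathcal{U}_i) \ge 0$, we obtain $\inf_j q_{ij} = h(K,f,\mathcal{U}_i)$ whenever $\mathcal{U}_i$ is a cover and $\inf_j q_{ij} = 0$ otherwise; taking the supremum over $i$ then recovers $h(K,f)$ by the first step. The main technical obstacle I foresee is the second step: keeping the c.e.-presentation bookkeeping consistent across iterated preimages, joins and sub-cover enumerations, and in particular verifying the extension of Proposition~\ref{prop.equivalence.comp.closed.subset} from finite to c.e.\ index sets. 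Once this is settled, $\Sigma_2$-computability of $h(K,f)$ follows immediately from the identity $h(K,f) = \sup_i \inf_j q_{ij}$.
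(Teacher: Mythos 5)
Your overall strategy is the same as the paper's: restrict the supremum to finite covers by ideal balls, show that each $N_n(K,f,\mathcal{U}_I)$ is upper semi-computable uniformly in $(I,n)$ (so that each $h(K,f,\mathcal{U}_I)$ is uniformly $\Pi_1$), and take the supremum over $I$ to land in $\Sigma_2$. Your two substantive steps are sound. In particular, the extension of Proposition~\ref{prop.equivalence.comp.closed.subset} from finite to c.e.\ index sets is legitimate for exactly the reason you give (dovetail the enumeration of the c.e.\ union with the semi-decision procedure on its finite stages; compactness of $K$ guarantees termination when the inclusion holds), and finite intersections of c.e.\ open sets are uniformly c.e.\ open via the semi-decidable inclusions $\overline{B_m}\subset B_p$. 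The paper avoids naming this extension by instead certifying that a candidate sub-collection of the join covers $K$ through an auxiliary finite ideal-ball cover $\mathcal{U}_J$ of $K$ that is thinner than the candidate, but this is the same computation in different bookkeeping.

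There is, however, a concrete error in your final assembly. You pad with $q_{ij}=0$ until coverage of $K$ by $\mathcal{U}_i$ is certified and claim that $\inf_j q_{ij}=h(K,f,\mathcal{U}_i)$ for cover indices ``because $h(K,f,\mathcal{U}_i)\ge 0$.'' The inequality cuts the other way: since $h(K,f,\mathcal{U}_i)\ge 0$ and at least one $q_{ij}$ equals $0$, you get $\inf_j q_{ij}=\min\bigl(0,\,h(K,f,\mathcal{U}_i)\bigr)=0$ for every $i$, so your double sequence represents $0$, not $h(K,f)$. Padding an infimum with a lower bound of the target destroys it (it is padding a supremum that $0$ would leave unharmed). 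The fix is standard: since ``$\mathcal{U}_i$ covers $K$'' is semi-decidable and the set of such $i$ is nonempty, computably enumerate the certified cover indices as $\iota(0),\iota(1),\dots$ and set $q_{ij}=a^{(\iota(i))}_j$, initializing the decreasing upper approximations at a value known to dominate $h(K,f,\mathcal{U}_{\iota(i)})$, e.g.\ $\lceil\log_2|\mathcal{U}_{\iota(i)}|\rceil$. Then $\sup_i\inf_j q_{ij}=\sup_i h(K,f,\mathcal{U}_{\iota(i)})=h(K,f)$, and the rest of your argument goes through.
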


The proof will follow from the following intermediate statement: 

\begin{proposition}
\label{prop.computing.number.covers}
Let $(X,d,\mathcal{S})$ be a recursively compact metric 
space and $(K,f)$  a computable dynamical system 
in $X$.  Then there is an algorithm which, given a finite $I\subset \N$ and $n\in \N$, outputs a sequence of rational numbers $h_{i}$ such that  
\[\left(N_n\left(K,f,
\mathcal{U}_I\right)\right)_{I,n}= \inf_{i}h_{i}. \]

\end{proposition}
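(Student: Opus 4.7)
The plan is to show that $N_n(K,f,\mathcal{U}_I)$ is upper-computable uniformly in $(I,n)$. Since this quantity is a positive integer bounded above by $|I|^n$, it suffices to produce a non-increasing sequence of integer upper bounds converging to it; the infimum is then attained.

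The first step is to represent each atom of the join $\bigvee_{k\leq n-1} f^{-k}(\mathcal{U}_I)$ as an effectively open set. Because $f$ is computable, the iterates $f^k$ are uniformly computable, and for each ideal ball $B_m$ the preimage $f^{-k}(B_m)$ is a union $U_{A_{k,m}}$ of ideal balls with $A_{k,m}\subset\N$ enumerable uniformly in $(k,m)$. For a tuple $\vec{i}=(i_0,\ldots,i_{n-1})\in I^n$, the atom $\bigcap_{k\leq n-1}f^{-k}(B_{i_k})$ is then equal to the union of all ideal balls $B_\ell$ for which there exist $m_k\in A_{k,i_k}$ with $\overline{B_\ell}\subset\bigcap_k B_{m_k}$; this last condition is semi-decidable by the proposition on semi-decidability of $\overline{B_n}\subset B_m$ (which uses recursive compactness), so each atom is an r.e. open set $V_{\vec{i}}=U_{E_{\vec{i}}}$ with $E_{\vec{i}}$ uniformly enumerable in $\vec{i}$. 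Taking finite unions over any $J\subseteq I^n$ then yields r.e. open sets $V_J=U_{E_J}$, uniformly in $J$.

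The second step is to semi-decide the sub-cover condition $K\subset V_J$. Since $K$ is effectively closed and $(X,d,\mathcal{S})$ is recursively compact, Proposition~\ref{prop.equivalence.comp.closed.subset} yields semi-decidability of $K\subset U_F$ for \emph{finite} $F\subset\N$; by compactness of $K$, enumerating finite subsets $F$ of $E_J$ and testing each gives a semi-decision procedure for $K\subset V_J$. The algorithm can then dovetail over all $J\subseteq I^n$ (there are at most $2^{|I|^n}$ of them), maintaining the current smallest $|J|$ for which the inclusion has been verified and outputting that value each time it decreases. The resulting sequence is non-increasing, and every output is an actual sub-cover cardinality, hence at least $N_n(K,f,\mathcal{U}_I)$; conversely, any optimal $J^{*}$ is eventually certified because $K\subset V_{J^{*}}$ really holds, so the infimum equals $N_n(K,f,\mathcal{U}_I)$.

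The main obstacle is ensuring uniformity in $(I,n,J)$: namely, that a single algorithm takes $(I,n,J)$ as input and enumerates the representation $E_J$, simultaneously for all relevant $J$, so that the dovetailed cover checks really produce the claimed sequence $(h_i)$. Once this uniform effective presentation of the atoms and their unions is established, all remaining ingredients (semi-decidability of $K\subset U_F$, compactness of $K$) are directly provided by the hypotheses and the already recorded propositions.
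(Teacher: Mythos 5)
Your proposal is correct and follows essentially the same route as the paper: both certify candidate sub-covers of the join by exhibiting finitely many ideal balls whose closures are verified (via recursive compactness) to lie inside finite-stage approximations of the atoms $\bigcap_k f^{-k}(B_{i_k})$, and which are verified to cover the effective compact set $K$, then take the running minimum of the certified cardinalities. The only differences are organizational (you package the atoms as uniformly r.e.\ open sets and then semi-decide $K\subset V_J$, where the paper inlines this into a single enumeration over auxiliary covers $\mathcal{U}_J$ and subsets $L$ of atoms), plus the trivial points of emitting the default value $|I|^n$ at every stage and handling the degenerate case where $\mathcal{U}_I$ fails to cover $K$.
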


\begin{proof} Let $(K,f)$ be a computable dynamical system in $X$. A consequence 
of the computability of $f$ is that there 
exists some algorithm $\mathcal{A}$ 
which on input $k,m$ enumerates a set $I_{(k,m)}$
such that $f^{-k} (B_m) = U_{I_{(k,m)}}$.
Let us consider the algorithm which given as input $(I,n,i)$, where $I$ is a finite set and $n,i$ are integers, performs the following steps:  

\begin{enumerate}
\item initialization of a variable $c$ 
with the value $c = |I|^n$.
\item test if $\mathcal{U}_I$
is a cover of $K$ (using the recursive compacity 
of $X$): if not, then set $c=0$.
\item for all sets $J \subset \llbracket 0,i \rrbracket$ and for all 
$L$ subset of 
$\llbracket 1, |I|\rrbracket ^n$, it 
does the following operations: 
\begin{enumerate}
\item test if $\mathcal{U}_J$
is a cover of $K$ (using the recursive compactness 
of $X$).
\item if true, for all $k \in J$, test if 
there exists some element $(l_1, \dots, l_{n})$ 
of $L$ such that
\[\overline{B_k} \subset 
\bigcap_{i \le n-1} U_{\llbracket 0,i \rrbracket 
\cap I_{(i,m[l_i])}},\]
where $m[l_i]$ is the $l_i$th 
element of $I$. The set $U_{\llbracket 0,i \rrbracket 
\cap I_{(i,m[l_i])}}$ can be considered as 
an approximation of $f^{-i} (B_{m[l_i]})$. 
If the test is true for all $k$, 
this implies that $\mathcal{U}_J$ 
is a thinner cover than 
\[\left\{ \bigcap_{i \le n-1} 
f^{-i} (B_{m[l_i]}) : (l_1,...,l_n) \in L\right\} \subset \bigvee_{k \le n-1} f^{-k} (\mathcal{U}_I),\]
and in particular that this set is a subcover 
of $\bigvee_{k \le n-1} f^{-k} (\mathcal{U}_I)$.
In this case, attribute the value $\min(|L|,c)$
to $c$.
\end{enumerate}   
\item output the value of $c$.          
\end{enumerate}

Denote $\xi (I,n,i)$ the output of this 
algorithm.
We have, by definition, the equality 
\[\left(N_n\left(K,f,
\mathcal{U}_I\right)\right)_{I,n} 
= \inf_{i} \xi (I,n,i).\]

\end{proof}

\begin{proof}[Proof of Theorem~\ref{theorem.obstruction}:]
Let $(K,f)$ be a dynamical system in $X$. 
The supremum in the definition of 
the topological entropy can be taken 
over finite open covers of $K$ with 
ideal balls. Indeed, for any cover $\mathcal{U}$
of $K$, any $U \in \mathcal{U}$ 
and any $x \in U$, 
there exists an ideal ball 
$B_{n(x)} \subset U$ that contains $x$. 
The cover $\{B_{n(x)} , x \in K\}$ 
clearly admits as a sub-cover some $\mathcal{U}_I$ with finite $I\subset \N$, which is therefore thinner than 
$\mathcal{U}$. Thus, 
\[h(K,f,\mathcal{U})\le h(K,f,\mathcal{U}_I).\] 
As a consequence, 
\[h(K,f) = \sup_{I} h(K,f,\mathcal{U}_I),\]
where the supremum is taken now over all the finite 
set of integers. 
The result now follows from Proposition~\ref{prop.computing.number.covers}, 
and the fact that the logarithm 
is a computable function. 
\end{proof}

\begin{remark}We note that the given proof of Theorem \ref{theorem.obstruction}  \emph{relativizes} to $(K,f)$ in the sense that, when $(K,f)$ is not computable, we still have an algorithm that can correctly $\Sigma_{2}$-compute the topological entropy if provided with arbitrarily good  descriptions of $K$ as an effective set and of $f$ as a computable function.\end{remark}

Theorem \ref{theorem.obstruction} gives us a definite upper bound on how computationally hard the entropy of a system can be.  This motivates the following definition:

\begin{definition} A class of computable dynamical systems is said to be \emph{entropy-complete} if all non-negative $\Sigma_{2}$-computable reals can be realized as the entropy of a system in the class. 
\end{definition}

As mentioned in the introduction, it is known that the class of Cellular Automata in dimension $d=2$ is entropy-complete, but not in dimension $d=1$.   It turns out that classes of systems admitting generating covers cannot be entropy-complete either.   

\begin{definition}
Let $(X,d)$ be a compact metric space,
$f : X \rightarrow X$ a continuous function, 
and $\mathcal{U}$ a finite open cover of 
$X$. We say that $\mathcal{U}$ is a generating 
cover for $X$ when for all $\mathcal{U}'$ finite 
open cover of $X$, there exists some 
$n$ such that the cover 
\[\bigvee_{k \le n-1} f^{-k} (\mathcal{U})\]
is thinner than $\mathcal{U}'$.
\end{definition}

\begin{proposition}
Let $(X,d,\mathcal{S})$ be a recursively 
compact metric space, and $(K,f)$ a 
computable dynamical system in $X$. If 
$(K,f)$ has a generating cover, then its entropy 
is a $\Pi_1$-computable number. 
Its upper (resp. lower) 
entropy dimension is a $\Sigma_2$ (resp. 
$\Pi_2$) computable 
number.
\end{proposition}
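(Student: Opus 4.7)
Plan: The key observation is that if $\mathcal{U}$ is a generating cover then $h(K,f)=h(K,f,\mathcal{U})$. For any finite open cover $\mathcal{U}'$ of $K$, the generating hypothesis produces $n$ such that $\mathcal{V}:=\bigvee_{k\le n-1}f^{-k}(\mathcal{U})$ is thinner than $\mathcal{U}'$. Using the monotonicity of $h(K,f,\cdot)$ under refinement together with the elementary identity $N_m(K,f,\mathcal{V})=N_{m+n-1}(K,f,\mathcal{U})$ --- which flattens the iterated join --- one obtains $h(K,f,\mathcal{V})=h(K,f,\mathcal{U})\ge h(K,f,\mathcal{U}')$. Taking the supremum over $\mathcal{U}'$ yields $h(K,f)=h(K,f,\mathcal{U})$.

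Next I pass to a generating cover made of ideal balls. For each $x\in K$, pick an ideal ball containing $x$ and included in some member of $\mathcal{U}$; by compactness of $K$, extract a finite subcover $\mathcal{U}_{I_0}$. Being a refinement of $\mathcal{U}$, and since refinement commutes with $f$-joins and composes transitively, $\mathcal{U}_{I_0}$ is itself a generating cover. Proposition~\ref{prop.computing.number.covers} applied with $I=I_0$ shows that $N_n(K,f,\mathcal{U}_{I_0})$ is $\Pi_1$-computable uniformly in $n$; consequently $\tfrac{1}{n}\log_2 N_n(K,f,\mathcal{U}_{I_0})$ is uniformly $\Pi_1$-computable, and taking infimum over $n$ preserves $\Pi_1$-computability. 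Hence $h(K,f)=h(K,f,\mathcal{U}_{I_0})$ is $\Pi_1$-computable.

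For the upper and lower entropy dimensions, the same reduction applies: the bound $N_n(K,f,\mathcal{U}')\le N_{n+n_0-1}(K,f,\mathcal{U}_{I_0})$ (with $n_0=|I_0|$) together with $\log_2(n+c)/\log_2 n\to 1$ shows that both dimensions of $(K,f)$ are already attained at $\mathcal{U}_{I_0}$, and so reduce to computing the $\liminf_n$ and $\limsup_n$ of the uniformly $\Pi_1$-computable sequence $\log_2\log_2 N_n(K,f,\mathcal{U}_{I_0})/\log_2 n$. The $\liminf$ equals $\sup_N\inf_{n\ge N}$; the inner infimum of a uniformly $\Pi_1$-computable sequence is again $\Pi_1$-computable, so the $\liminf$ is $\Sigma_2$-computable. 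The dual expression $\inf_N\sup_{n\ge N}$ for the $\limsup$ yields $\Pi_2$-computability after exploiting the monotonicity in $N$ of the inner supremum, which matches the classes in the statement.

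The main subtlety is non-uniformity: no algorithm extracts $I_0$ from a presentation of $(K,f)$. Only the existence of such an $I_0$ is used, which suffices to place $h(K,f)$ and the two dimensions in the arithmetical hierarchy of reals as claimed, but precludes any uniform quantitative version of the proposition.
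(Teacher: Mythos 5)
For the $\Pi_1$-computability of the entropy, your argument is correct and follows the same route as the paper's one-line proof: reduce to $h(K,f)=h(K,f,\mathcal{U})$ for a generating cover and invoke Proposition~\ref{prop.computing.number.covers}. You in fact supply a detail the paper leaves implicit and that is genuinely needed, namely that Proposition~\ref{prop.computing.number.covers} only applies to covers of the form $\mathcal{U}_I$ with $I\subset\N$ finite, so one must first replace the abstract generating cover by a refinement made of ideal balls and check that this refinement is still generating. Your flattening identity $N_m(K,f,\mathcal{V})=N_{m+n-1}(K,f,\mathcal{U})$ and your closing remark about the non-uniformity in $I_0$ are both correct.

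The part on entropy dimensions has a gap. The paper never defines these quantities (and its proof does not address them), so you had to supply a definition; granting your choice of $\limsup_n \log_2\log_2 N_n/\log_2 n$ and $\liminf_n \log_2\log_2 N_n/\log_2 n$, two problems remain. First, the complexity count for the $\limsup$ does not go through as written: the inner quantity $\sup_{n\ge N}a_n$ is a supremum of uniformly $\Pi_1$-computable reals, hence a priori only $\Sigma_2$-computable rather than $\Sigma_1$-computable, so $\inf_N\sup_{n\ge N}a_n$ lands a priori in $\Pi_3$, not $\Pi_2$. The monotonicity of $N\mapsto\sup_{n\ge N}a_n$ does not make these suprema lower-computable, so the appeal to it does not close this gap; you would need either a different normal form for the $\limsup$ of an upper-computable sequence or additional structure of the sequence $(N_n)_n$. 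Second, your conclusion attaches $\Sigma_2$ to the $\liminf$ and $\Pi_2$ to the $\limsup$, whereas the statement attaches $\Sigma_2$ to the \emph{upper} dimension and $\Pi_2$ to the \emph{lower} one; under the usual convention that the upper dimension is the $\limsup$, these assignments are swapped, so either you are implicitly using a non-standard convention or the derivation does not yield the classes claimed. A minor further slip: the shift $n_0$ in your bound $N_n(K,f,\mathcal{U}')\le N_{n+n_0-1}(K,f,\mathcal{U}_{I_0})$ is the iteration depth provided by the generating property for $\mathcal{U}'$, not the cardinality $|I_0|$.
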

\begin{proof}
Note that if $\mathcal{U}$ is a generating cover, then 
 \[h(K,f) =  h(K,f,\mathcal{U}),\]
 and the results follows from Proposition~\ref{prop.computing.number.covers}.
\end{proof}

An interesting application of the previous proposition is the following.  Recall that a dynamical 
system is called \emph{expansive} when there exists $\alpha>0$ such that 
for any ordered pair of points $(x,y) \in X^2$, 
there exists some $n$ such that 
\[d(f^n(x), f^n(y)) \ge \alpha.\] 

It is a well known fact that expansive dynamical systems admit generating covers \cite{Kurka}, which leads us to the following:

\begin{corollary}\label{expansive} If a computable system $(K,f)$ is expansive, then its topological entropy is a $\Pi_1$-computable number. 
\end{corollary}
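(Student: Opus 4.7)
The plan is to reduce immediately to the preceding proposition by producing a generating cover of the form $\mathcal{U}_I$ for $(K,f)$. Let $\alpha>0$ be an expansivity constant. Since $K$ is compact and the ideal balls form a basis of the topology of $X$, one can fix a finite $I\subset \N$ for which $\mathcal{U}_I$ covers $K$ and every one of its members has diameter less than $\alpha$, for instance by extracting a finite subcover of the family of ideal balls of radius $\alpha/4$ centred at points of $\mathcal{S}$.

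The main step is the classical lemma that, under expansivity, the diameters (inside $K$) of the elements of $\mathcal{V}_n := \bigvee_{k \le n-1}f^{-k}(\mathcal{U}_I)$ tend uniformly to $0$ as $n\to\infty$. I would argue by contradiction: if some $\varepsilon>0$ were achieved arbitrarily late, one could select, for each $n$, points $y_n\ne z_n$ in $K$ with $d(y_n,z_n)\ge\varepsilon$ lying in a common element of $\mathcal{V}_n$; this forces $d(f^k y_n, f^k z_n)<\alpha$ for every $k\le n-1$, since each element of $\mathcal{V}_n$ is contained in some intersection $\bigcap_{k\le n-1} f^{-k}(U_{i_k})$ with $U_{i_k}\in \mathcal{U}_I$ of diameter $<\alpha$. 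Extracting a convergent subsequence of $(y_n,z_n)$ in the compact set $K\times K$ produces a limit pair $(y,z)$, with $y\ne z$, whose forward orbit stays within distance $\alpha$, contradicting expansivity. Once diameters shrink uniformly, the standard Lebesgue number argument shows that $\mathcal{V}_n$ eventually refines any other finite open cover $\mathcal{U}'$ of $K$, establishing that $\mathcal{U}_I$ is a generating cover.

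Applying the preceding proposition then yields that $h(K,f)=h(K,f,\mathcal{U}_I)$ is a $\Pi_1$-computable number. It is worth stressing that we do not need to compute $\alpha$ or to produce $I$ algorithmically: the mere existence of a generating cover of the form $\mathcal{U}_I$ suffices, since Proposition~\ref{prop.computing.number.covers} provides a $\Pi_1$-computable approximation of $h(K,f,\mathcal{U}_I)$ uniformly in $I$. The main (and essentially only) obstacle in the plan is the uniform-diameter lemma for expansive maps, which is a standard compactness argument; the rest of the corollary is purely formal once this ingredient and the preceding proposition are in place.
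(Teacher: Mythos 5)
Your proposal is correct and follows essentially the same route as the paper: the paper simply cites the well-known fact that expansive systems admit generating covers and then invokes the preceding proposition, exactly as you do. The only difference is that you spell out the standard compactness proof of that classical fact (and take care to build the generating cover from ideal balls so that Proposition~\ref{prop.computing.number.covers} applies), which the paper leaves to a reference.
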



\section{Computable systems on Cantor sets}\label{section.symbolic.dynamics}

In this section we 
consider dynamical systems over Cantor sets, that is, over the metric space $\mathcal{A}^{\N}$  for some finite alphabet $\mathcal{A}$. The purpose is to show that computable systems over these spaces are \emph{entropy-complete} in the sense that their entropies  exhibit the full spectrum of all $\Sigma_{2}$-computable numbers, even when restricting the systems to be surjective.  
We start in 
Section~\ref{sec.characterization.entropy.cantor.set} by first providing a construction that yields the desired result for general computable systems.  In Section~\ref{sec.surjectivity} we 
ameliorate the result by means of a different construction that yields the result for surjective systems which, moreover, act on the canonical Cantor space $\{0,1\}^{\N}$.  Although the second result implies the first one, the ideas involved are different and, we believe, of independent interest. 

\subsection{\label{sec.characterization.entropy.cantor.set}
A characterization for general systems}

In the following, a \emph{pattern} on a finite set $\mathcal{A}$ is an element of $\mathcal{A}^{\mathbb{U}}$, with $\mathbb{U} \subset \N \times \Z$ a finite set. We say that a pattern $p \in \mathcal{A}^{\mathbb{U}}$ appears 
in $x \in \mathcal{A}^{\N \times \Z}$ when 
there exists some $(i,j) \in \N \times \Z$ 
such that $x_{|(i,j)+\mathbb{U}}=p$. 
A compact subset $K$ of $\mathcal{A}^{\N \times \Z}$ is effective when 
there is an algorithm which 
enumerates a set of patterns 
such that $K$ is the set of configurations 
in which none of these patterns appear. \bigskip

Let us denote by $\sigma^{\vec{e}^2}:\{0,1\}^{\mathbb{N} \times \mathbb{Z}} \rightarrow \{0,1\}^{\mathbb{N} \times \mathbb{Z}}$ the function given by 
$$
\sigma^{\vec{e}^2} (x) = (x_{i,j+1})  \qquad \text{ where } \quad x = (x_{i,j}) \in \{0,1\}^{\mathbb{N} \times \mathbb{Z}}.
$$ 

That is, the superscript $\vec{e}^2$ corresponds to the 
restriction to the vertical direction of the 
two-dimensional shift action. For all $m \ge 0$, 
we denote $\Delta_m = \llbracket 0,m \rrbracket \times \llbracket -m,m \rrbracket$, where $\llbracket i,j \rrbracket$ denotes the set of integers $\{i,i+1,\dots,j\}$.
$\Delta_{-1}$ will denote the empty set. See Figure~\ref{fig.delta} 
for an illustration. 

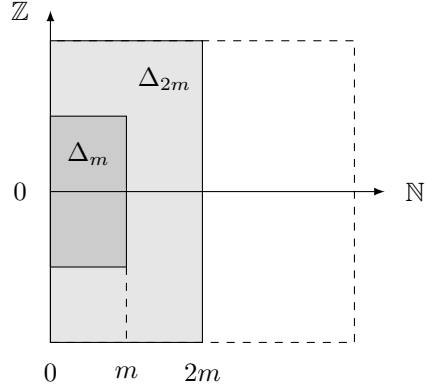
\begin{figure}[ht]
\[\begin{tikzpicture}[scale=0.2]
\fill[gray!20] (0,-10) rectangle (10,10);
\draw (0,-10) rectangle (10,10);
\fill[gray!40] (0,-5) rectangle (5,5);
\draw (0,-5) rectangle (5,5);
\draw[-latex] (0,-10) -- (0,12); 
\draw[-latex] (0,0) -- (22,0); 
\node at (-2,12) {$\Z$};
\node at (24,0) {$\N$};
\node at (-2,0) {$0$};
\node at (0,-12) {$0$};
\draw[dashed] (0,-10) -- (20,-10) -- (20,10) -- (0,10);
\draw[dashed] (5,-5) -- (5,-10); 
\node at (5,-12) {$m$};
\node at (10,-12) {$2m$};
\node at (2.5,2.5) {$\Delta_m$};
\node at (7.5,7.5) {$\Delta_{2m}$};
\end{tikzpicture}\]
\caption{\label{fig.delta} Illustration of the definition of 
the sets $\Delta_m$ and $\Delta_{2m}$ for some $m \ge 0$.}
\end{figure}

For all $l \ge 0$, 
we denote $\mathcal{U}_l$ the 
open cover of $\{0,1,\#\}^{\N \times \Z}$ 
whose elements are the cylinders associated to $\Delta_l$.
The sequence $(\mathcal{U}_l)$ is a \textit{generating 
sequence of open covers}, meaning that 
for any finite open cover $\mathcal{U}$ of 
$\{0,1,\#\}^{\N \times \Z}$, there exists 
some $l$ such that $\mathcal{U}_l$ is thinner 
than $\mathcal{U}$. Our first result, stated as Proposition~\ref{prop.cantor.real}, will make use of the 
following well known theorem:

\begin{theorem}[\cite{Hochman-2009}]
\label{thm.realization.eds}
A non-negative real number is
entropy of a dynamical 
system $(K,\sigma^{\vec{e}^2})$, 
where $K$ is an effective closed 
subset of $\{0,1\}^{\N \times \Z}$, if 
and only if it is $\Sigma_2$-computable.
\end{theorem}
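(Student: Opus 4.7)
The ``if'' implication follows immediately from our Theorem~\ref{theorem.obstruction} applied to the computable dynamical system $(K,\sigma^{\vec{e}^2})$ on the recursively compact metric space $\{0,1\}^{\mathbb{N}\times\mathbb{Z}}$: the shift $\sigma^{\vec{e}^2}$ is computable and $K$ is effective by hypothesis, so $h(K,\sigma^{\vec{e}^2})$ is $\Sigma_2$-computable.

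For the converse, given a non-negative $\Sigma_2$-computable $h$, I would write $h=\sup_i h_i$ with $h_i := \inf_j q_{i,j}$ for some computable double sequence of non-negative rationals $(q_{i,j})$; by replacing $h_i$ with $\max_{k\le i} h_k$ (which is still uniformly $\Pi_1$-computable, as a finite maximum of $\Pi_1$ numbers) one may freely assume the sequence $(h_i)$ is non-decreasing. Recall from the text that $(\mathcal{U}_l)_l$ is a generating sequence of open covers, each $\mathcal{U}_{l+1}$ being thinner than $\mathcal{U}_l$, so by the lemma comparing entropy with respect to refined covers, the sequence $l \mapsto h(K,\sigma^{\vec{e}^2},\mathcal{U}_l)$ is non-decreasing and one has $h(K,\sigma^{\vec{e}^2})=\sup_l h(K,\sigma^{\vec{e}^2},\mathcal{U}_l)$. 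It therefore suffices to construct an effective closed set $K$ with $h(K,\sigma^{\vec{e}^2},\mathcal{U}_l)=h_l$ for each $l\ge 0$.

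The construction would proceed hierarchically. For each $l$, I would build an effective one-dimensional subshift $Y_l$ over the finite alphabet $\Sigma_l := \{0,1\}^{\Delta_l}$ having topological entropy exactly $h_l$; this is possible because $h_l$ is $\Pi_1$-computable, and any non-negative $\Pi_1$-computable number is realized as the entropy of some effective one-dimensional subshift (a classical fact about complexity-enforcing subshifts). The key is to arrange the $Y_l$ coherently: the natural projection $\pi_l : \Sigma_{l+1}\to \Sigma_l$ obtained by restricting a $\Delta_{l+1}$-pattern to $\Delta_l$ should send $Y_{l+1}$ onto $Y_l$, with each $Y_l$-admissible sequence admitting some $Y_{l+1}$-admissible lift. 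Concretely, $Y_{l+1}$ is designed as an ``entropy amplification'' of $Y_l$, adding approximately $h_{l+1}-h_l$ bits of controlled free entropy per vertical step via a supplementary effective constraint. One then defines $K$ as the set of $x\in \{0,1\}^{\mathbb{N}\times\mathbb{Z}}$ such that for every $l\ge 0$, the vertical sequence of $\Delta_l$-patterns $(x|_{(0,k)+\Delta_l})_{k\in\mathbb{Z}}$ lies in $Y_l$; since each $Y_l$ is effective, so is $K$.

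The main obstacle is the simultaneous coherence of the hierarchy: one must ensure that the projection of $K$ onto $\Delta_l$-columns equals \emph{all} of $Y_l$ rather than merely a strict subshift of it, so that no entropy is lost from the target value $h_l$ due to constraints imposed by the deeper levels $Y_j$ with $j>l$. This is precisely where the self-similar/Turing-simulating machinery of Hochman~\cite{Hochman-2009} enters: one encodes the successive computations approximating the $h_l$ from above inside nested computing zones whose scales grow with $l$, and carefully routes enough independent information across scales that the projection-surjectivity is preserved while the entropy at scale $l$ is pinned to $h_l$.
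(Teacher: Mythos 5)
First, note that the paper does not prove this statement at all: Theorem~\ref{thm.realization.eds} is imported verbatim from \cite{Hochman-2009} and used as a black box in the proof of Proposition~\ref{prop.cantor.real}. So the relevant question is whether your attempt actually constitutes a proof. Your ``only if'' direction is fine: $\{0,1\}^{\N\times\Z}$ is recursively compact, $\sigma^{\vec{e}^2}$ is computable and $K$ is effective, so Theorem~\ref{theorem.obstruction} applies (and this creates no circularity, since that theorem is proved independently). The reduction of the converse to realizing a non-decreasing uniformly $\Pi_1$ sequence $(h_l)$ with $h(K,\sigma^{\vec{e}^2},\mathcal{U}_l)=h_l$ is also a legitimate (if stronger than necessary) target.

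The gap is in the realization itself. The one concrete mechanism you propose --- building $Y_{l+1}$ from $Y_l$ by ``adding approximately $h_{l+1}-h_l$ bits of controlled free entropy per vertical step via a supplementary effective constraint'' --- does not go through as stated: $h_{l+1}-h_l$ is a difference of $\Pi_1$-computable numbers, which is in general only $\Delta_2$ and \emph{not} $\Pi_1$-computable, so there is no effective subshift (or effective constraint of the kind you invoke) whose entropy is that increment. This is exactly why the naive ``independent columns, each contributing one increment'' idea fails and why the theorem is hard. Beyond that, the three requirements you correctly identify --- projection compatibility $\pi_l(Y_{l+1})=Y_l$, surjectivity of the projection of $K$ onto each $Y_l$ (no entropy loss from deeper levels), and pinning the scale-$l$ entropy to exactly $h_l$ --- are the entire technical content of Hochman's construction, and your text resolves them only by appealing to ``the self-similar/Turing-simulating machinery of Hochman.'' As written, the attempt is therefore an architectural outline that defers the essential difficulty to the very reference being proved, rather than a proof; if the intent is to use the result as the paper does, the honest move is to cite it, and if the intent is to prove it, the nested-computation-zone construction must actually be carried out and the increment issue addressed (e.g., by re-encoding $h$ so that the entropy contributed at each scale is controlled from above uniformly, rather than by realizing the increments $h_{l+1}-h_l$ directly).
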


We now present the announced construction. The idea is to complete the dynamical 
system $(K,\sigma^{\vec{e}^2})$, where $K \subset \{0,1\}^{\N \times \Z}$ is effective, into 
a computable dynamical system $(\{0,1,\#\}^{\N \times \Z},f)$. The function $f$ behaves like 
$\sigma^{\vec{e}^2}$, except when it 
detects a pattern which is forbidden in $K$ (an ``error''). 
In this case, it introduces a special symbol $\#$. 
This symbol is propagated by the dynamics to the whole 
half plane. In other words, the dynamics is such that the set $K$ acts as a ``repeller'' in the sense that everything that is not in $K$ ends up converging to the sequence $\#\#\#\#\#\dots$. Thus, the only part of the dynamics supporting entropy is the set $K$ itself on which $f$ acts just like $\sigma^{\vec{e}^2}$. In particular, they must have the same entropy. We then simply recode the obtained map into a computable dynamical system $(g,\{0,1,\#\}^{\mathbb{N}})$.

\begin{proposition} \label{prop.cantor.real}
A non-negative real number 
is the entropy of a computable dynamical system $(\mathcal{A}^{\mathbb{N}},f)$ if and only if it is $\Sigma_2$-computable.
\end{proposition}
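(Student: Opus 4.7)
The plan is to establish both directions. The ``only if'' direction is immediate from Theorem~\ref{theorem.obstruction}, since $\mathcal{A}^{\mathbb{N}}$ with its standard metric and a natural choice of ideal points is a recursively compact computable metric space.

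For the ``if'' direction, given a non-negative $\Sigma_2$-computable real $h$, I would first apply Theorem~\ref{thm.realization.eds} to obtain an effective $\sigma^{\vec{e}^2}$-invariant closed set $K \subseteq \{0,1\}^{\mathbb{N}\times\mathbb{Z}}$ with $h(K, \sigma^{\vec{e}^2}) = h$, and fix a recursive enumeration $(p_n)_{n\ge 1}$ of forbidden patterns defining $K$. The core construction is a computable continuous map $f : \{0,1,\#\}^{\mathbb{N}\times\mathbb{Z}} \to \{0,1,\#\}^{\mathbb{N}\times\mathbb{Z}}$ specified by a cell-local rule of the following shape: set $f(x)_{i,j} = \#$ whenever $x$ already contains a $\#$ in a small neighborhood of $(i,j+1)$, or whenever some forbidden pattern $p_n$ with $n \le \varphi(i,j)$ appears inside a window $W_{i,j}$ around $(i,j+1)$; otherwise set $f(x)_{i,j} = x_{i,j+1}$. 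Choosing the index bound $\varphi(i,j)$ and the window size both to grow computably with $|i|+|j|$ keeps the rule locally finite, so that $f$ is well-defined, continuous, and computable, while guaranteeing that every forbidden pattern in $x$ is eventually picked up by sufficiently many cells.

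Next I would verify the two crucial properties of $f$. \emph{First}, on $K$ (viewed inside $\{0,1,\#\}^{\mathbb{N}\times\mathbb{Z}}$ via the natural inclusion) the map $f$ coincides with $\sigma^{\vec{e}^2}$, since configurations of $K$ contain neither $\#$ nor any forbidden pattern; hence $(K, \sigma^{\vec{e}^2})$ embeds as a subsystem and $h(\{0,1,\#\}^{\mathbb{N}\times\mathbb{Z}}, f) \ge h$. \emph{Second}, every configuration outside $K$ contains a $\#$ or a forbidden pattern, which under iteration causes $\#$ to spread throughout the half plane via the local rule, so $f^k(x) \to \#^{\mathbb{N}\times\mathbb{Z}}$ with convergence uniform on every compact subset disjoint from $K$. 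A cover-refinement argument then yields the reverse inequality $h(\{0,1,\#\}^{\mathbb{N}\times\mathbb{Z}}, f) \le h$, because orbits outside any fixed neighborhood of $K$ enter a prescribed neighborhood of the fixed point in a uniformly bounded number of steps and so contribute only sub-exponentially many distinguishable trajectories. Finally, a computable homeomorphism of Cantor spaces transfers $(\{0,1,\#\}^{\mathbb{N}\times\mathbb{Z}}, f)$ to a computable dynamical system on $\mathcal{A}^{\mathbb{N}}$ with the same entropy.

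The main obstacle I anticipate is the calibration of $\varphi$ and $W_{i,j}$: they must grow quickly enough for every forbidden pattern to be eventually caught somewhere and for $\#$ to propagate freely into every finite region, yet they must remain locally bounded so that $f$ is computable and so that no spurious $\#$ is ever introduced into a configuration of $K$. A secondary delicate point is the cover-refinement bound for the entropy off $K$, which requires exploiting compactness on each $X \setminus U_\epsilon(K)$ and carefully tracking how fast transient orbits are absorbed into a fixed neighborhood of $\#^{\mathbb{N}\times\mathbb{Z}}$.
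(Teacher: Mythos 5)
Your overall route coincides with the paper's: the obstruction direction from Theorem~\ref{theorem.obstruction}, and the realization direction via Theorem~\ref{thm.realization.eds}, an error-detecting completion of $(K,\sigma^{\vec{e}^2})$ on the alphabet $\{0,1,\#\}$ in which $\#$ propagates and absorbs everything outside $K$, and a final computable recoding of $\{0,1,\#\}^{\N \times \Z}$ onto a one-sided Cantor space. The construction of $f$, the inclusion of $(K,\sigma^{\vec{e}^2})$ as a subsystem giving $h(f)\ge h$, and the recoding step all match the paper and are sound.

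The gap is in your justification of the reverse inequality $h(\{0,1,\#\}^{\N\times\Z},f)\le h$. Your argument controls only the orbits that leave a fixed neighborhood of $K$: those are indeed absorbed by the all-$\#$ fixed point in a uniformly bounded number of steps and contribute sub-exponentially. It says nothing about orbit segments that remain in a small neighborhood of $K$ for a long time \emph{without being in $K$}. Concretely, a configuration avoiding $\#$ and the first $t$ forbidden patterns on a large window survives detection for many iterates, so the number of length-$n$ orbit segments seen through a fixed cover is governed not by $K$ but by the approximating subshifts $K_t$ (forbidding only $p_1,\dots,p_t$) with $t=t(n)\to\infty$; a priori this only bounds $h(f,\mathcal{U}_l)$ by something like $\limsup_n \frac{1}{n}\log\bigl(\text{number of patterns of height }n\text{ admissible for }K_{t(n)}\bigr)$, and identifying that with $h(K,\sigma^{\vec{e}^2},\mathcal{U}_l)$ requires the monotonicity in $t$ of these counts and an interchange of infima — precisely the computation with the quantities $M_{l,n,t}$ in steps 3 and 4 of the paper's proof. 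This is not a formal quibble: for general systems the exponential growth rate of orbit segments staying $\epsilon$-close to an invariant set need not converge, as $\epsilon\to 0$, to the entropy of that set (positive topological tail entropy), so the passage from ``neighborhoods of $K$'' to ``$K$'' genuinely uses the structure of the construction. A soft way to close the gap would be to observe that every point outside $K\cup\{\text{all-}\#\}$ is wandering (its orbit reaches the fixed point, and reaching a $\#$ at the origin is a locally constant event) and to invoke the fact that topological entropy is carried by the non-wandering set; but that is not the argument you wrote, and as stated your cover-refinement step does not go through.
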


\begin{proof}The fact that the entropy of 
a computable dynamical system $(\mathcal{A}^{\mathbb{N}},f)$ 
is $\Sigma_2$-computable 
derives from Theorem~\ref{theorem.obstruction}. We prove the other direction of the equivalence. 
Let $h$ be a non-negative 
$\Sigma_2$-computable real number 
and $(K,\sigma^{\textbf{e}^2})$ an effective 
dynamical system whose entropy is $h$. 
Since $K$ is an effective subset 
of $\{0,1\}^{\N \times \Z}$, there exists 
an algorithm $\mathcal{A}$ 
which enumerates a set of 
patterns $\mathcal{F}$ such that $K$ 
is defined as the set of elements of 
$\{0,1\}^{\N \times \Z}$ in which no pattern 
of $\mathcal{F}$ appear. We denote $\mathcal{F}=\{p_1, p_2, ... \}$, where the subscript $i$ in 
the notation $p_i$ corresponds to the 
order of enumeration by $\mathcal{A}$.

\begin{enumerate}
\item \textbf{Completing $(K,\sigma^{\textbf{e}^2})$ using error detection:}

Let us construct a function $f : 
\{0,1,\#\}^{\N \times \Z} \rightarrow \{0,1,\#\}^{\N \times \Z}$ whose restriction on $K$ is $\sigma^{\textbf{e}^2}$. Let us fix some $x \in \{0,1,\#\}^{\N \times \Z}$. We define $f(x)$ 
by defining successively its restrictions on all 
the sets $\Delta_{m+1} \setminus 
\Delta_m$, $m \ge -1$, as follows: 

\begin{enumerate}
\item If one of the following conditions 
is verified, then for all $\textbf{v} 
\in \Delta_{m+1} \backslash \Delta_m$, 
$f(x)_{\textbf{v}} = \#$:
\begin{itemize}
\item $ \exists \textbf{u} \in \Delta_{2(m+1)}$ such that $x_{\textbf{u}}=\#$ 
\item the restriction of 
$x$ to some subset of $\Delta_{2(m+1)}$ is 
in $\{p_1,...,p_{m+1}\}$.
\end{itemize} 
\item else, for all $\vec{v}$ in $\Delta_{m+1} \backslash \Delta_m$, 
$f(x)_{\vec{v}} = x_{\vec{v}+\vec{e}^2}$.
\end{enumerate}

The symbol $\#$ is used as an \textit{error symbol}. Indeed, 
when a forbidden pattern is detected, the function writes 
a symbol $\#$ which then propagates on $\N \times 
\Z$: when $x$ contains some symbol $\#$ on some $\Delta_{m}$, 
then on every position outside of $\Delta_{\lceil m/2 \rceil}$, the symbol of $f(x)$ on this position
is $\#$. See an illustration on Figure~\ref{fig.diez}.

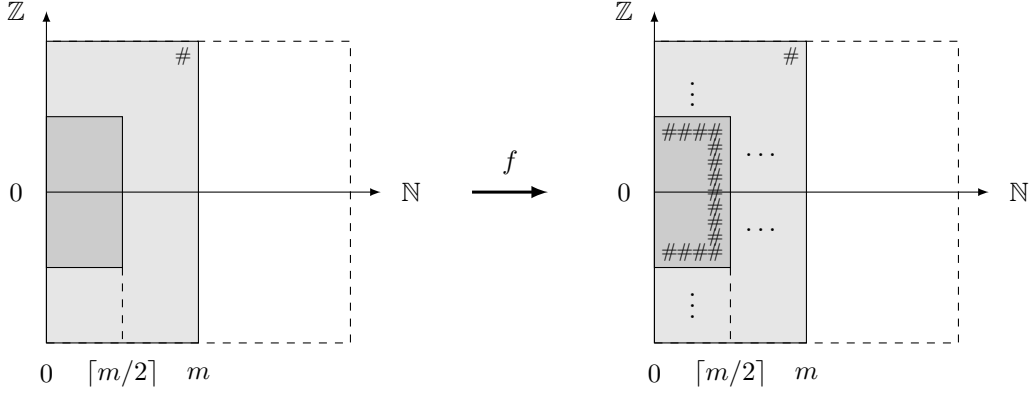
\begin{figure}[ht]
\[\begin{tikzpicture}[scale=0.2]
\begin{scope}
\fill[gray!20] (0,-10) rectangle (10,10);
\draw (0,-10) rectangle (10,10);
\fill[gray!40] (0,-5) rectangle (5,5);
\draw (0,-5) rectangle (5,5);
\draw[-latex] (0,-10) -- (0,12); 
\draw[-latex] (0,0) -- (22,0); 
\node at (-2,12) {$\Z$};
\node at (24,0) {$\N$};
\node at (-2,0) {$0$};
\node at (0,-12) {$0$};
\draw[dashed] (0,-10) -- (20,-10) -- (20,10) -- (0,10);
\draw[dashed] (5,-5) -- (5,-10); 
\node at (5,-12) {$\lceil m/2 \rceil$};
\node at (10,-12) {$m$};
\node[scale=0.75] at (9,9) {$\#$}; 
\draw[-latex,line width = 0.4mm] (28,0) -- (33,0);
\node at (30.5,2) {$f$};
\end{scope}

\begin{scope}[xshift=40cm]
\fill[gray!20] (0,-10) rectangle (10,10);
\draw (0,-10) rectangle (10,10);
\fill[gray!40] (0,-5) rectangle (5,5);
\draw (0,-5) rectangle (5,5);
\draw[-latex] (0,-10) -- (0,12); 
\draw[-latex] (0,0) -- (22,0); 
\node at (-2,12) {$\Z$};
\node at (24,0) {$\N$};
\node at (-2,0) {$0$};
\node at (0,-12) {$0$};
\draw[dashed] (0,-10) -- (20,-10) -- (20,10) -- (0,10);
\draw[dashed] (5,-5) -- (5,-10); 
\node at (5,-12) {$\lceil m/2 \rceil$};
\node at (10,-12) {$m$};
\node[scale=0.75] at (9,9) {$\#$}; 
\node[scale=0.75] at (4,4) {$\#$};
\node[scale=0.75] at (3,4) {$\#$}; 
\node[scale=0.75] at (2,4) {$\#$};
\node[scale=0.75] at (1,4) {$\#$}; 
\node[scale=0.75] at (4,3) {$\#$};  
\node[scale=0.75] at (4,2) {$\#$};  
\node[scale=0.75] at (4,1) {$\#$};  
\node[scale=0.75] at (4,0) {$\#$};  
\node[scale=0.75] at (4,-1) {$\#$};  
\node[scale=0.75] at (4,-2) {$\#$};  
\node[scale=0.75] at (4,-3) {$\#$};  
\node[scale=0.75] at (4,-4) {$\#$};  
\node[scale=0.75] at (3,-4) {$\#$}; 
\node[scale=0.75] at (2,-4) {$\#$};
\node[scale=0.75] at (1,-4) {$\#$};
\node at (2.5,7) {$\vdots$};
\node at (2.5,-7) {$\vdots$};
\node at (7,2.5) {$\hdots$};
\node at (7,-2.5) {$\hdots$};
\end{scope}
\end{tikzpicture}\]
\caption{\label{fig.diez} Illustration of the propagation of the 
error symbol $\#$ under the action of $f$.}
\end{figure}

\item \textbf{The entropy of $(\{0,1,\#\}^{\N \times \Z},f)$ is greater 
than the entropy of $(K,\sigma^{\textbf{e}^2})$:}

Since $K$ is stable under the action of $f$, 
which acts as $\sigma^{\textbf{e}^2}$ on 
this subset, the entropy of $(\{0,1,\#\}^{\N \times \Z},f)$ is greater than the entropy of the dynamical system 
$(K,\sigma^{\vec{e}^2})$, which is $h$.

\item \textbf{Upper bound on the minimal cardinality of a sub-cover of $\bigvee_{k \le n} f^{-k} (\mathcal{U}_l)$:}

This number is equal to the number of possible 
finite sequences 
$(x_{|\Delta_l},..., f^n(x)_{|\Delta_l})$, where 
$x \in \{0,1,\#\}^{\N \times \Z}$. In order 
to count these sequences, we distinguish two 
possibilities, as follows. For all $l,n$, 
we denote $M_{l,n,t}$ the number 
of patterns on $\llbracket 0, l \rrbracket \times \llbracket -l-n,l\rrbracket$ on alphabet $\{0,1\}$ that do not contain any pattern in $\{p_1,...,p_{t}\}$.

\begin{itemize}
\item \textbf{The pattern $f^n(x)_{|\Delta_l}$ 
does not contain any symbol $\#$:} this 
implies that $f^{n-1}(x)_{|\Delta_{2l}}$ 
does not contain any $\#$ or a pattern 
in $\{p_1,...,p_l\}$. Repeating this argument $n-1$ times, 
we get that $x_{|\Delta_{2^n l}}$ does not 
contain the symbol $\#$ or any 
pattern in $\{p_1,..., p_{2^{n-1} l}\}$. Moreover, since $f^{n-1}(x)_{|\Delta_{2l}}$ 
does not contain any $\#$ or a pattern 
in $\{p_1,...,p_l\}$, we have $f^{n}(x)_{\Delta_l} = f^{n-1} (x)_{\Delta_l - \vec{e}^2}$. Repeating this, 
we get 
$x_{|\Delta_l - n \vec{e}^2}= f^n (x)_{|\Delta_l}$. As a consequence, the number of sequences 
$(x_{|\Delta_l},..., f^n(x)_{|\Delta_l})$ 
that stand under this case is smaller than 
$M_{l,n,2^{n-1}l}$.

\item \textbf{The pattern $f^n(x)_{|\Delta_l}$ 
does contain a symbol $\#$:} this 
implies that $f^{n-1} (x)_{|\Delta_{2l}}$ 
contains else the symbol $\#$ or one 
of the patterns in $\{p_1,...,p_{l}\}$. In any 
of these cases, $f^{n-2} (x)_{|\Delta_{4l}}$ contains else $\#$ or a pattern in $\{p_1,...,p_{2l}\}$ and no $\#$. In the second case, 
this comes from the fact that $f^{n-2} (x)_{|\Delta_{2l}-\vec{e}^2} = f^{n-1}(x)_{|\Delta_{2l}}$. Repeating this argument, 
we obtain that $x_{|\Delta_{2^n l}}$ else 
contains $\#$ or a pattern in $\{p_1,...,p_{2^{n-1} l}\}$ and no $\#$.
Let us consider $m_0$ the maximal integer $m \ge -1$ such that $x_{|\Delta_m}$ does not contain $\#$ or a pattern 
in $\{p_1,...,p_{\lceil \frac{m}{2}\rceil}\}$.
If $m \ge 0$, $f(x)_{|\Delta_{\lceil \frac{m_0}{2}\rceil}}$ does not contain any forbidden 
pattern in $\{p_1,...,p_{\lceil \frac{m}{2}\rceil}\}$ or any symbol $\#$, and for all $\vec{u}
\notin \Delta_{\lceil \frac{m_0}{2}\rceil}$, 
$f(x)_{\vec{u}} = \#$.
We distinguish two possibilities: 
\begin{enumerate}
\item If $m_0 \le l$, then the sequence 
$(x_{|\Delta_l},..., f^n(x)_{|\Delta_l})$ 
is determined by $x_{|\Delta_l}$. 
The number of sequences in this case 
is bounded by $M_{l,0,0}$.
\item Else, $m_0 > l$, and we denote $(l_k)_{k \in \llbracket 0,n \rrbracket }$ the sequence 
such that $l_0 = m_0$ and for all $k \in \llbracket 0,n-1 \rrbracket$, $l_{k+1}= \lceil 
\frac{l_k}{2}
\rceil$. It is direct that for all $k \in \llbracket 1,n \rrbracket$, 
$f^k (x)_{|\Delta_{l_k}}$ does not contain 
any pattern in $\{p_1,...,p_{l_k}\}$ or 
any symbol $\#$ and for all $\textbf{u} \notin 
\Delta_{l_k}$, $f^k (x)_{\textbf{u}}=\#$.

We denote also $k_0$ the minimal $k \in \llbracket 1,n-1\rrbracket$ such that $l_k \le l$ 
(it exists since $f^n(x)_{|\Delta_l})$ contains 
the symbol $\#$).
The sequence $(x_{|\Delta_l},..., f^n(x)_{|\Delta_l})$ is determined by 
the restriction of $x$ to $\llbracket 0, l 
\rrbracket \times \llbracket -l-k_0,l\rrbracket$.
This pattern can be extended into a
pattern on $\Delta_{m_0}$ which 
does not contain a pattern in $\{p_1,..., p_{\lceil \frac{m_0}{2} \rceil}\}$. 
Moreover, since $l_{k_0} \ge m_0 / 2^{k_0}$ 
and $l_{k_0} \le l$, $m_0$ is bounded from 
above by $ 2^{k_0} \cdot l$. In particular 
the pattern $x_{|\llbracket 0, l 
\rrbracket \times \llbracket -l-k_0,l\rrbracket}$ can be extended into a pattern 
on $\Delta_{m_0}$ that does not 
contain a pattern in $\{p_1,..., p_{2^{k_0-1}\cdot l}\}$. In particular, $x_{|\llbracket 0, l 
\rrbracket \times \llbracket -l-k_0,l\rrbracket}$
does not contain any of these patterns. 
As a consequence, the number of sequences 
that stand in this case is bounded 
by $\sum_{k=1}^{n-1} M_{l,k,2^{k-1}l}$. \end{enumerate}
\end{itemize}

From this we deduce that
\begin{equation}\label{eq.1}
N_n (\{0,1,\#\}^{\N \times \Z},f,\mathcal{U}_l)\le M_{l,0,0} + \sum_{k=1}^n M_{l,k,2^{k-1}l}.
\end{equation}

\item \textbf{The entropy of $(\{0,1,\#\}^{\N \times \Z},f)$ is smaller
than the entropy of $(K,\sigma^{\textbf{e}^2})$:}

By equation (\ref{eq.1}) above,  we have that 
\[\inf_n \frac{\log_2 (N_n (f,\{0,1\}^{\N \times \Z},\mathcal{U}_l))}{n} = \lim_n \frac{\log_2 (N_n (f,\{0,1\}^{\N \times \Z},\mathcal{U}_l))}{n}\]
is smaller than:
\begin{align*}\liminf_n \frac{1}{n} \log \left(M_{l,0,0} + \sum_{k=1}^n M_{l,k,2^{k-1} l}\right) & = \liminf_n \frac{1}{n} \log \left(\sum_{k=k_1}^n M_{l,k,2^{k-1} l}\right)\\
& \le  \liminf_n \frac{1}{n} \log \left(\sum_{k=k_1}^n M_{l,k,2^{k_1-1} l}\right)\\
& \le \liminf_n \frac{1}{n} \log \left((n-k_1+1) \cdot M_{l,n,2^{k_1-1} l}\right)\\
& = \liminf_n \frac{1}{n} \log \left( M_{l,n,2^{k_1-1} l}\right)\\
& = \inf_n  \frac{1}{n} \log \left( M_{l,n,2^{k_1-1} l}\right)
\end{align*}
where $k_1 \ge 1$ is an arbitrary integer.
In the second inequality, we used the 
fact that the map $k \mapsto M_{l,k,2^{k_1-1} l}$ 
is non-decreasing. For the last 
equality, we used the fact that the sequence 
$(\log(M_{l,n,2^{k_1-1}l})/n)_n$ converges 
towards the entropy of $(K_{k_1},\sigma^{\vec{e}^2})$, where $K_{k_1}$ is the set of elements 
of $\{0,1\}^{\N \times \Z}$ where 
no pattern of $\{p_1,...,p_{k_1}\}$ appears, 
and that the entropy is also equal to the 
infimum of this sequence. We deduce 
that 

\[h(\{0,1,\#\}^{\N \times \Z},f,\mathcal{U}_l) \le \inf_n \frac{1}{n} \log (M_{l,n,2^{k_1-1}l}).\]
Since this inequality is verified for arbitrary 
$k_1 \ge 1$, this implies that 
\begin{align*}h(\{0,1,\#\}^{\N \times \Z},f,\mathcal{U}_l) & \le \inf_{k_1} \inf_n \frac{1}{n} \log (M_{l,n,2^{k_1-1}l})\\
& \le \inf_n \inf_{k_1} \frac{1}{n} \log (M_{l,n,2^{k_1-1}l}) \\
& = \inf_n \frac{1}{n} \log \left( N_{n+2l} (K,\sigma^{\vec{e}^2},\mathcal{U}_l)\right)\\
& = \lim_n \frac{1}{n} \log \left( N_{n+2l} (K,\sigma^{\vec{e}^2},\mathcal{U}_l)\right)\\
& = \lim_n \frac{n+2l}{n} \frac{1}{n+2l} \log \left( N_{n+2l} (K,\sigma^{\vec{e}^2},\mathcal{U}_l)\right)\\
\end{align*}

This yields that 
\[h(\{0,1,\#\}^{\N \times \Z},f,\mathcal{U}_l) \le h(K,\sigma^{\vec{e}^2},\mathcal{U}_l).\]

By taking the supremum over $l$ of these inequalities, since $(\mathcal{U}_l)$ is 
a generating sequence of open covers,
\[h(\{0,1,\#\}^{\N \times \Z},f) \le h(K,\sigma^{\vec{e}^2}).\]

\item \textbf{Recoding the system $(\{0,1,\#\}^{\N \times \Z},f)$ into a system $(\{0,1,\#\}^{\N},g)$:}

Let us consider $\phi$ a computable invertible 
map $\N \times \Z \rightarrow \N$. For instance, 
one can construct $\phi$ such that for all $n,k \ge 0$, $\phi (n,k) = 2^{2n}-1+2^{2n+1} 
k$ and  $\phi (n,-k-1) = 2^{2n+1}-1+2^{2n+1} k$.
Consider also the function 
$\psi : \{0,1,\#\} ^{\N \times \Z} \rightarrow 
\{0,1,\#\}^{\N}$ defined for all $u$, $n \ge 0$, and $k \in \Z$, by $(\psi(u))_{n,k} = u_{\phi(n,k)}$. $\psi$ and its inverse are then clearly computable.  Since $f$ is also computable, it follows that the function $g= \psi \circ f \circ \psi^{-1} $ 
is computable. Since $f$ and $g$ are conjugated, 
$h(\{0,1,\#\}^{\N},g) = h(\{0,1,\#\}^{\N \times \Z},f)$. But this entropy is equal to 
$h(K,\sigma^{\vec{e}^2})=h$ by the second 
and fourth points above. 
\end{enumerate}
\end{proof}

\subsection{\label{sec.surjectivity}
Surjective maps over $\{0,1\}^{N}$ are entropy-complete}

The functions constructed in the proof of 
Proposition~\ref{prop.cantor.real} 
are clearly not onto. In this Section, after some preliminaries on one-dimensional subshifts and related notions, we 
prove that the statement of Proposition~\ref{prop.cantor.real} stands under the additional 
constraint of surjectivity. Moreover, the constructed systems act on the canonical Cantor space $\{0,1\}^{\N}$.


Let us denote $\sigma : \{0,1\}^{\mathbb{N}} 
\rightarrow  \{0,1\}^{\mathbb{N}} $ 
the \textbf{shift} map, defined such that for all $x \in \{0,1\}^{\mathbb{N}}$, and $j \in \mathbb{N}$,
$(\sigma (x))_j= x_{j+1}$.
We say that a word $w\in\{0,1\}^{n}$ 
\textbf{appears} in a sequence $x \in \{0,1\}^{\mathbb{N}}$ 
when there exists some $k \in \mathbb{N}$ 
such that $x_{|\llbracket k , k+n-1\rrbracket} = w$. Note that over $X= \{0,1\}^{\mathbb{N}}$, 
the sets we call $\mathcal{U}_I$ simply correspond to the cylinders
$\{x \in \{0,1\}^{\N}: x_{|\llbracket k,k+n-1 \llbracket}=w\}$, 
where $k,n$ are integers and $w$ is a word of length $n$.

\begin{definition}
A \textbf{subshift} of the cantor 
set $\{0,1\}^{\mathbb{N}} $ is 
a closed subset $Z \subset \{0,1\}^{\mathbb{N}} $ 
such that $\sigma(Z) \subset Z$.
\end{definition}

In the literature, a subshift $Z$ of $\{0,1\}^{\mathbb{N}}$ 
is called \emph{effective} when there is an algorithm 
that enumerates a set $\mathcal{F}$  of words 
such that the configurations of $Z$ 
are defined by forbidding these words to 
appear on them: 

\[Z = \left\{x \in \{0,1\}^{\mathbb{N}}: \forall k,n \ge 0, x_{|\llbracket k , k+n-1\rrbracket} \notin \mathcal{F}\right\}.\]

Thus, the systems $(Z,\sigma_{|Z})$ where $Z$ is 
an effective subshift, are computable dynamical systems according to Definition \ref{computable-dynamics}. We call entropy of the subshift $Z$ the 
entropy of the dynamical system $(Z,\sigma_{|Z})$.
This entropy is known to be equal to 
\[h(Z) = \lim_n \frac{\log_2 (N_n (Z))}{n},\]
where $N_n (Z)=|\mathcal{L}_n (Z)|$, 
and $\mathcal{L}_n (Z)$ 
is the set of words of length $n$ that appear in a configuration of $Z$.
Let us denote $\mathcal{L} (Z) = 
\bigcup_n \mathcal{L}_n (Z)$. This set 
is called the \textbf{language} of $Z$.
In particular, when $h(Z)>0$, $Z$ cannot be a finite set (since if it 
was finite, $(N_n(Z))_n$ would be a bounded sequence). 

\begin{remark}Since the shift map is expansive, we obtain in virtue of Corollary \ref{expansive} that the entropies of effective subshifts are $\Pi_{1}$-computable numbers.
\end{remark}

\begin{definition}
A subshift $Z$ of $\{0,1\}^{\mathbb{N}}$ 
is said to be \textbf{decidable} when there 
exists an algorithm that taking as input 
a finite word on $\{0,1\}$ decides if it 
is in $\mathcal{L}(Z)$ or not.
\end{definition}

In particular, if a subshift $Z$  
is decidable, it is effective.

\begin{definition}
A subshift $Z$ of $\{0,1\}^{\mathbb{N}}$ 
is said to be \textbf{mixing} when
for any pair $(w,w')$ of words with respective lengths $m,n$ that appear in configurations of $Z$, there exists a configuration $z \in Z$ and two integers 
$k,k'$ such that $\llbracket k, k+m-1\rrbracket$ and $\llbracket k', k'+n-1\rrbracket$ are disjoint and  such that 
$z_{|\llbracket k, k+m-1\rrbracket} = w$ and $z_{|\llbracket k', k'+n-1\rrbracket} = w'$.
\end{definition}

\begin{lemma}
\label{lemma.mixing.decidable}
Let $Z$ be a mixing 
subshift of $\{0,1\}^{\mathbb{N}}$. 
Then $\sigma_{|Z}$ is surjective.
\end{lemma}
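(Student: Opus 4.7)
My plan is to show surjectivity via a compactness argument that extracts a preimage of an arbitrary $z\in Z$ from well-chosen configurations provided by the mixing property. The key observation is that mixing, applied to a word with itself, guarantees that any prefix of $z$ appears at a strictly positive position in some configuration of $Z$.

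First I would fix $z \in Z$ and, for each $n \ge 1$, set $w_n = z_{|\llbracket 0, n-1 \rrbracket}$. Since $z \in Z$, we have $w_n \in \mathcal{L}(Z)$. Applying the mixing property to the pair $(w_n, w_n)$, I obtain a configuration $y^{(n)} \in Z$ and two integers $k_n < k_n'$ such that $\llbracket k_n, k_n + n - 1 \rrbracket$ and $\llbracket k_n', k_n' + n - 1 \rrbracket$ are disjoint and $y^{(n)}$ carries $w_n$ on both intervals. Disjointness forces $k_n' \ge k_n + n \ge n \ge 1$, so there is an occurrence of $w_n$ inside $y^{(n)}$ at a position at least $1$.

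Next I would define $\tilde y^{(n)} = \sigma^{k_n' - 1}(y^{(n)}) \in Z$ (using $\sigma(Z) \subset Z$). By construction, $\tilde y^{(n)}_{|\llbracket 1, n \rrbracket} = w_n = z_{|\llbracket 0, n-1 \rrbracket}$, while $\tilde y^{(n)}_0 \in \{0,1\}$ is some symbol $a_n$. Since $Z$ is a closed (hence compact) subset of $\{0,1\}^{\mathbb{N}}$, the sequence $(\tilde y^{(n)})_n$ admits a convergent subsequence $\tilde y^{(n_j)} \to y^*$ with $y^* \in Z$.

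Finally I would verify $\sigma(y^*) = z$. For any fixed $m$, convergence in the product topology gives $\tilde y^{(n_j)}_{|\llbracket 0, m \rrbracket} = y^*_{|\llbracket 0, m \rrbracket}$ for $j$ large enough, and for $n_j \ge m$ we have $\tilde y^{(n_j)}_{|\llbracket 1, m \rrbracket} = z_{|\llbracket 0, m-1 \rrbracket}$; hence $y^*_{|\llbracket 1, m \rrbracket} = z_{|\llbracket 0, m-1 \rrbracket}$ for every $m$, which means $\sigma(y^*) = z$. As $z$ was arbitrary, $\sigma_{|Z}$ is surjective. I do not expect any serious obstacle here; the only mildly delicate point is the trick of applying mixing to the pair $(w_n, w_n)$ rather than coupling $w_n$ with a fixed single letter, which is what guarantees an occurrence of $w_n$ at a position $\ge 1$ without any extra assumption on the alphabet.
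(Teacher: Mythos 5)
Your proof is correct, and it takes a genuinely (if mildly) different route from the paper's. You fix a target $z\in Z$ and directly manufacture a preimage: for each $n$ you use mixing of the prefix $w_n$ with itself to force an occurrence of $w_n$ at a coordinate $\ge 1$, shift it to coordinate $1$, and then pass to a limit point via the compactness of $Z$. The paper instead shows that $\sigma(Z)$ meets every nonempty cylinder of $Z$, i.e.\ that the image is dense, which suffices because $\sigma(Z)$ is the continuous image of a compact set and hence closed (a step the paper leaves implicit). Both arguments use the mixing hypothesis for exactly one purpose --- placing a word of $\mathcal{L}(Z)$ at a strictly positive coordinate --- but your self-pairing trick $(w_n,w_n)$ achieves this using only the literal content of the paper's definition of mixing (two disjoint occurrences, in unspecified order), whereas the paper's proof pairs $u$ with an auxiliary word $v$ and tacitly assumes it may choose which word sits to the left. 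Your version is slightly longer but more self-contained, needing only sequential compactness of $Z$ rather than the closedness of the image.
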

  
  \begin{proof}
  Indeed, it is sufficient to see that 
  any cylinder of $Z$, which 
  is the non empty intersection of $Z$ with 
  a cylinder of $\{0,1\}^{\mathbb{N}}$, 
  contains an image of $\sigma_{|Z}$. 
  Let us consider such a cylinder and 
  denote $u$ be a length $n$ word on $\{0,1\}$
  such that this cylinder is 
  $\{ x \in Z : x_{|\llbracket 0, n-1 \rrbracket} = u\}$. Also consider another length $m$ word $v$ 
  in the language of $Z$. Since $Z$ is mixing, there exists a configuration $z$ of $Z$ 
  and $k > m$ such that 
  $z_{|k+\llbracket 0,n-1\rrbracket} = u$ 
  and $z_{\llbracket 0,m-1\rrbracket}=v$. 
  Since $Z$ is a subshift, the configuration 
  $\sigma^{k} (x)$ is in the cylinder 
  $\{ x \in Z : x_{|\llbracket 0, n \rrbracket} = u\}$, and is the image of $\sigma^{k-1} (x)$.
  \end{proof}

We will need the following theorem. It realizes $\Pi_{1}$-computable numbers within the class of mixing decidable subshifts.   
  \begin{theorem}
\label{theorem.hellouin}
There exists an algorithm which, given as input
the code $n$ of an algorithm which enumerates 
a non-increasing sequence of non-negative rational numbers $(r_k)$, computes the code of 
an algorithm which decides the language of 
a decidable and mixing subshift $Z$ of $\{0,1\}^{\mathbb{N}}$ whose entropy is $\inf_k r_k$.
\end{theorem}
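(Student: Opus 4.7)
My strategy is to construct the subshift $Z$ stage by stage, with stage $k$ using only $r_0, \dots, r_k$, in such a way that mixing is enforced by a fixed synchronising block $w_\star$ (say $w_\star = 10^N 1$ for a large $N$), decidability follows because each stage is a finite combinatorial construction, and entropy is tuned via the number of admissible data blocks at each length scale. Without loss of generality I can assume $r_k \leq 1$.

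Concretely, I would choose a fast-growing computable scale sequence $(n_k)$ with $|w_\star|/n_k \to 0$, set target counts $a_k = \lfloor 2^{(n_k + |w_\star|) r_k} \rfloor$, and at stage $k$ select a set $B_k \subseteq \{0,1\}^{n_k}$ of $a_k$ words that avoid $w_\star$ as a factor. Define $Z_k$ to be the subshift of all configurations of the form $\dots b_{-1} w_\star b_0 w_\star b_1 \dots$ with $b_i \in B_k$. Each $Z_k$ is decidable, mixing (the separator $w_\star$ bridges any two data blocks), and has entropy exactly $\log_2 a_k/(n_k + |w_\star|) = r_k + o(1)$. To force $Z_{k+1} \subseteq Z_k$, I would choose the words of $B_{k+1}$ to be concatenations of $B_k$-blocks separated by copies of $w_\star$, trimmed to length $n_{k+1}$; a short count shows that the pool of such concatenations is large enough to contain $a_{k+1}$ items, because $r_{k+1} \leq r_k$ and $|w_\star|/n_k$ is negligible. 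Finally set $Z = \bigcap_k Z_k$.

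Decidability of $\mathcal{L}(Z)$ is then uniform in the input code: given $u$, locate the largest $k$ with $n_k \leq |u|$, compute $B_0, \dots, B_k$ from the first $k+1$ values of the enumerator, and check whether $u$ is a factor of some configuration of $Z_k$. Mixing of $Z$ is inherited from the mixing of each $Z_k$ via $w_\star$, which remains in $\mathcal{L}(Z)$ and still serves as a synchronising word. The main technical difficulty is showing that $h(Z)$ equals $\inf_k r_k$ rather than something strictly smaller: the upper bound $h(Z) \leq h(Z_k) \to \inf_k r_k$ is immediate from nesting, but the lower bound requires that enough entropy-witnessing blocks survive into the intersection. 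The compatibility condition imposed when building $B_{k+1}$ ensures precisely this, since every block of $B_k$ still appears in some configuration of $Z_\ell$ for all $\ell \geq k$ and hence in $Z$. Counting the concatenations of $B_k$-blocks separated by $w_\star$ then gives at least $a_k^{\lfloor n/(n_k + |w_\star|)\rfloor}$ factors of length $n$ in $\mathcal{L}(Z)$, which yields $h(Z) \geq r_k$ for every $k$ and therefore $h(Z) \geq \inf_k r_k$, completing the construction.
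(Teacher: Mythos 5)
The paper itself does not prove this statement---it defers entirely to the construction in the cited reference---so you are supplying an argument the paper omits; your hierarchical block-with-separator scheme is indeed the standard route for such realizations. However, two steps fail as written. First, the entropy lower bound: you claim that all $a_k^{\lfloor n/(n_k+|w_\star|)\rfloor}$ concatenations of $B_k$-blocks yield factors of $\mathcal{L}(Z)$, hence $h(Z)\ge r_k$ for \emph{every} $k$. Combined with your upper bound $h(Z)\le \inf_k r_k$, this would force $r_0=\inf_k r_k$, i.e.\ a constant sequence---so the count must be wrong. It is: only the $a_{k+1}$ concatenations selected into $B_{k+1}$ survive to stage $k+1$, and almost all of the concatenations you count are discarded there. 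What you can and should prove is the much weaker bound $N_{n_k}(Z)\ge a_k$ (valid once every $B_k$-block is placed inside some $B_{k+1}$-block), which, since $h(Z)$ is a genuine limit, already gives $h(Z)\ge \lim_k \frac{\log_2 a_k}{n_k}\ge \inf_j r_j$.

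Second, and more seriously, decidability and mixing of $Z=\bigcap_k Z_k$ do not follow from the corresponding properties of the $Z_k$, and these are the actual content of the theorem. Your decision procedure tests $u\in\mathcal{L}(Z_k)$ for the largest $k$ with $n_k\le |u|$, but in general $\mathcal{L}(Z)$ is a \emph{proper} subset of $\mathcal{L}(Z_k)$ even on words of length $\le n_k$: a short word may occur in $Z_k$ only across an adjacency $b\,w_\star\,b'$ of $B_k$-blocks that is never realised inside any $B_{k+1}$-block, and then it vanishes at the next stage. Without further constraints $Z$ is only co-c.e.\ (effective), not decidable. Likewise, a decreasing intersection of mixing subshifts need not be mixing: the configurations of $Z_k$ witnessing the co-occurrence of $u$ and $v$ may place them at distances growing with $k$, so no single $z\in Z$ need contain both. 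Both problems are repaired by the same device, which your ``compatibility condition'' only partially captures: choose $B_{k+1}$ so that $\mathcal{L}(Z_{k+1})$ and $\mathcal{L}(Z_k)$ agree on words of length $\le n_k$ and so that every \emph{pair} of $B_k$-blocks co-occurs inside a single $B_{k+1}$-block (e.g.\ reserve one block of $B_{k+1}$ concatenating all of $B_k$ twice over), and then re-verify that these forced blocks fit within the budget $a_{k+1}$. There remain smaller, fixable defects: trimming concatenations to length $n_{k+1}$ destroys the inclusion $Z_{k+1}\subseteq Z_k$ (take $n_{k+1}\equiv -|w_\star| \pmod{n_k+|w_\star|}$ instead); blocks merely avoiding $w_\star=10^N1$ can still create spurious occurrences of $w_\star$ across a block/separator boundary, spoiling unique parsing; and with $N$ fixed the separator overhead and the avoidance constraint cap the realisable entropy strictly below $1$, so parameters $r_k$ near $1$ need separate treatment.
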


\begin{proof}
This follows from the proof of Theorem 3.7 in~\cite{hellouin}.
\end{proof}


We now show how to ``encode'' decidable mixing subshifts onto computable maps of the Cantor set.  
Let us fix some encoding of programs by integers.
For all integer $n$, if it is associated to 
a program that decides the language of a mixing 
subshift, this subshift is denoted $Z_n$.
Such an integer, when $h(Z_n) >0$ and $Z_n$ 
is mixing, is called \textbf{admissible}.

\begin{lemma}\label{recoding}
There exists a sequence of bi-computable homeomorphisms $\kappa_n : Z_n \rightarrow \{0,1\}^{\N}$, $n$ admissible. This sequence is \textit{uniformly computable} in the sense that there is an algorithm which, provided with an admissible $n$, computes $\kappa_{n}$ and its inverse. 
\end{lemma}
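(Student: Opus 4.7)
The plan is to effectivize, uniformly in admissible $n$, the classical theorem that any nonempty, compact, totally disconnected metric space without isolated points is homeomorphic to the Cantor space $\{0,1\}^{\N}$. Each $Z_n$ is automatically compact and totally disconnected as a subshift of $\{0,1\}^{\N}$, so the geometric input to verify is the absence of isolated points. The homeomorphism will then be read off from the branching structure of the language tree of $Z_n$, using only the decider of $\mathcal{L}(Z_n)$ encoded by $n$, which yields uniformity.

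First I would check that admissibility forces $Z_n$ to have no isolated points. Mixing readily implies topological transitivity: for any basic opens $[w]_{Z_n}, [w']_{Z_n}$, applying mixing to $(w,w')$ yields $z \in Z_n$ in which $w$ and $w'$ appear at disjoint positions, producing some $k \ge 0$ with $\sigma^k([w]_{Z_n}) \cap [w']_{Z_n} \ne \emptyset$; hence there is $y \in Z_n$ with dense forward orbit. Suppose $x \in Z_n$ were isolated, with $\{x\} = [u]_{Z_n}$ for $u = x_{|\llbracket 0, k-1 \rrbracket}$. Applying mixing to $(u,u)$ gives $z \in Z_n$ containing $u$ at two disjoint positions $i<j$; both shifts $\sigma^i(z), \sigma^j(z)$ lie in $[u]_{Z_n} = \{x\}$, forcing $x$ to be periodic with finite orbit $O(x)$. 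Density of the orbit of $y$ gives $\sigma^{k_0}(y) = x$ for some $k_0$, so $\{\sigma^k(y)\}_{k \ge k_0} \subseteq O(x)$ is finite and the whole dense orbit is finite, making $Z_n$ finite with $h(Z_n) = 0$, a contradiction.

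Next I would construct $\kappa_n$ via the pruned language tree. In $\mathcal{T}(Z_n)$, each node $u \in \mathcal{L}(Z_n)$ has one or two children according to which of $u0, u1$ lie in $\mathcal{L}(Z_n)$, decidable from $n$; call $u$ a \emph{branching point} when it has two children. By the previous step, every branch contains infinitely many branching points. For $x \in Z_n$, enumerate the successive branching points $u_0(x) \prec u_1(x) \prec \dots$ along its path and set $\kappa_n(x)_i := x_{|u_i(x)|} \in \{0,1\}$. Injectivity holds because two distinct $x, x'$ first differ at some position $k$ where $x_{|\llbracket 0, k-1 \rrbracket}$ is a branching point of both paths with the same index. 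Surjectivity is obtained by descending the pruned tree following $y \in \{0,1\}^{\N}$: from $\epsilon$, traverse the unique-child chain until a branching point is reached, then go to the child dictated by the next bit of $y$, and iterate; each search for the next branching point terminates by the no-isolated-points property, so the resulting infinite path yields a unique $x \in Z_n$ with $\kappa_n(x) = y$. Bi-continuity is immediate, since each finite prefix of $x$ (resp. $y$) is determined by finitely many branching decisions of $y$ (resp. $x$).

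Computability and uniformity then follow: given a prefix of $x$ long enough to contain $u_i(x)$, the decider for $\mathcal{L}(Z_n)$ allows the first $i+1$ bits of $\kappa_n(x)$ to be output; symmetrically, a prefix of $y$ is converted into a prefix of $\kappa_n^{-1}(y)$ by iterated next-branching-point searches. All algorithms access $n$ only via its associated decider, so the construction is uniform in $n$. The main technical point I expect to negotiate is ensuring termination of the unbounded search for the next branching point when computing $\kappa_n^{-1}$; this is guaranteed precisely by the no-isolated-points verification of Step 1, after which the rest is a routine effectivization of the classical characterization of the Cantor space.
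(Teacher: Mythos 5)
Your construction is essentially the paper's: the map you define by reading off the letters of $x$ at the successive branching nodes of the language tree is exactly the paper's $\Psi(n,\cdot)$ (which outputs $z_k$ precisely when flipping the last letter of $z_{|\llbracket 0,k\rrbracket}$ stays in $\mathcal{L}(Z_n)$), and your injectivity, surjectivity and computability arguments --- including resolving termination of the next-branching-point search via the fact that every word in $\mathcal{L}(Z_n)$ extends to a branching node --- match the paper's. The only divergence is your Baire-category detour (periodic isolated point plus dense orbit) to rule out isolated points, which leans on a directed transitivity that the paper's symmetric co-occurrence definition of mixing only loosely supplies; the paper instead gets the same fact directly by joining $w$ with each of the two letters via mixing, which is the more robust route, but this does not change the substance of the proof.
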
 

\begin{proof} We note that when $Z$ is decidable, a function $\kappa: Z \rightarrow \{0,1\}^{\mathbb{N}}$ 
is computable if and only if there exists a non decreasing computable function $\phi : \mathbb{N} \rightarrow \mathbb{N}$ together with an algorithm which, provided with the $\phi(k)$ first symbols of a sequence $z$ in $Z$, computes the $k$ first symbols of the sequence $\kappa(z)$. We thus prove the lemma by showing the following: i) there exists an algorithm 
which given as input an admissible integer $n$ 
and a word in $\mathcal{L} (Z_n)$, outputs 
a word $\Psi(n,w)$ in $\{0,1\}^{*}$ satisfying the properties below, and ii) there exists a second algorithm which given as input an 
admissible integer $n$ and some other integer $k$,
outputs an integer $\phi(n,k)$ such that: 

\begin{itemize}
\item For all admissible $n$, $z \in Z_n$ and 
$k \ge 0$, $(\Psi(n,z_{|\llbracket 0,k\rrbracket}))$ is a growing sequence 
of words; 
\item denoting $\Psi(n,z)$ 
the limit sequence, for all $z \in Z_n$, 
$\kappa_n (z) = \Psi(n,z)$.
\item For all $z \in Z_n$ and $k$, the length of $\Psi(n,z_{|\llbracket 0, \phi(n,k)\rrbracket})$ 
is $\ge k$.
\end{itemize}

The details are as follows. 

\begin{enumerate}
\item \textbf{Definition of $\Psi$:}
Let us consider some admissible integer $n$, 
and $w \in \mathcal{L} (Z_n)$.
The word $\Psi(n,w)$ is the result of the 
following algorithm: 
\begin{enumerate}
\item Initialize an empty writting tape,
\item For all $k \in \llbracket 0, m-1\rrbracket$, 
where $m$ 
is the length of $w$ (step $k$), 
consider the word 
$w_{\llbracket 0,k \rrbracket}$ and the word 
 obtain from this one by flipping the last letter: 
 
 \begin{itemize}
 \item 

 If this second word is in $\mathcal{L}(Z_n)$ (use 
 the algorithm deciding the language of $Z_n$ for 
 this purpose), write the value of $w_k$ at the 
 end of the word on the writting tape. 
 \item 
Else, don't write anything on the tape.
 
 \end{itemize}
 \end{enumerate}
 
 \item \textbf{For all $n$ admissible and $z \in Z_n$, the limit sequence $\Psi(n,z)$ is well defined:}
 
 Indeed, by definition of the algorithm presented 
 in the first point, when some word $w \in \mathcal{L}_n (Z)$ 
 is prefix of another one $w' \in \mathcal{L}_{n+k} (Z)$, 
 the sequence of steps executed 
 by the algorithm defining $\Psi(n,w)$ is 
 prefix of the sequence 
 of steps executed by the algorithm defining 
 $\Psi(n+k,w')$. The complementary 
 steps only (potentially) add symbols.
 In other words, for all $z \in Z_n$, 
 $(\Psi(n,z_{\llbracket 0, k \rrbracket}))_k$ 
 is a growing sequence of words. The limit 
 $\Psi(n,z)$ is a priori finite, but is proved 
 to be infinite below.
 
 \item \textbf{The $\kappa_{n}$ are injective:}

Let us consider two different 
elements of $Z_n$, denoted $z$ and $z'$ and 
denote by $k_0$ the smallest integer $k$ such that 
$z_k \neq z'_k$. 
This means that any symbol written by the 
algorithm defining $\kappa_n$ before step $k_0$ is the same for $z$ and $z'$. 
However, when considering the integer $k_0$, 
since $z'_{|\llbracket 0,k_0\rrbracket}$ 
is obtained by flipping the last letter of 
$z_{|\llbracket 0,k_0\rrbracket}$ and these two 
words are in the language of $Z_n$,
a letter is written on the tape during this step, 
which is different for $z$ and $z'$. 
As a consequence, $z$ and $z'$ have a different 
image through $\kappa_n$.
 
 \item \textbf{The $\kappa_{n}$ are onto:}
 
 In order to prove that for all $z \in Z_n$, 
 $\kappa_n (z)$ is an infinite sequence, it is sufficient to see 
 that any word written on the 
 tape during the execution of the algorithm 
 defining $\kappa_n (z)$ is extended in the next 
 steps. Assuming this, $\kappa_n$ has 
 images in $\{0,1\}^{\N}$ and since any extension by a symbol means that the extension by the other symbol is possible,  $\kappa_n : Z_n \rightarrow \{0,1\}^{\N}$ is surjective.
 In order to prove this, it is enough to 
see that for any word $w$ in 
 the language of $Z_n$, there exists some $k$ such 
 that $w$ can be extended into two different length $k$ words in $\mathcal{L}(Z_n)$. Let 
 us fix such a word $w$.
 Since $h(Z_n)>0$, the subshift $Z_n$ is an infinite set, and thus the two letters $0,1$ are in $\mathcal{L}(Z_n)$. Since $Z_n$ is mixing, there 
 exists some $k \in \mathbb{Z}$ and two configurations $z^0$ and $z^1$ in $Z_n$ 
 such that the restriction of $z_0$ and $z_1$ 
 on $\llbracket 0, |w| \rrbracket $ is $w$, 
 and $z^0_k = 0$ and $z^1_k = 1$. The 
 restrictions of $z^0$ and $z^1$ on 
 $\llbracket 0, k \rrbracket$ are thus different.
 
 \item \textbf{Uniform computability 
 of the minimal mixing function of $Z_n$, 
 $n$ admissible:}
 
 There is an algorithm which given an admissible integer $n$ and an integer $k$ 
 outputs an integer $\psi(n,k)$
 such that for all $w,w' \in \mathcal{L}_k (Z_n)$, 
 there exists a configuration $z \in Z_n$ 
 such that $z_{|\llbracket 0,k-1 \rrbracket} = w$ 
 and $z_{|\llbracket k+\psi(n,k),\psi(n,k)+2k-1 \rrbracket} = w$. Indeed, such 
 an algorithm checks if this property is true 
 for successive integers, using the 
 algorithm deciding the language of $Z_n$, 
 stops when the property is true, and outputs 
 the current integer.

 \item \textbf{Proof of the third point and computability of $\kappa_{n}$:}
 For all $n$ admissible, let us denote $F: k \mapsto k + \psi(n,k)$, 
 and $\phi : (n,k) \mapsto F^k (1)$. For all $w \in \mathcal{L}_k (Z_n)$ and $a \in \{0,1\}$, there exists a configuration 
 $z \in Z_n$ such that $z_{|\llbracket 0, k-1 \rrbracket } = w$ and $z_{k+\psi(n,k)}=a$.
 Hence for any sequence $z \in Z_n$, once the algorithm 
 defining $\kappa_n (z)$ scanned the $F^k(1)$th first symbols of the sequence, it has determined 
 $k$ symbols of its image $f(x)$, which finishes the proof. 

\end{enumerate}
\end{proof}


We are now ready to state and prove the main result of the section.  

\begin{theorem}
\label{theorem.realization.surjective}
A non-negative real number 
is entropy of an effective dynamical 
system $(\{0,1\}^{\N},f)$ with $f$ surjective 
if and only if it is $\Sigma_2$-computable.
\end{theorem}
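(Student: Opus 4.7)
The $\Sigma_2$-computability of $h(f)$ for any computable dynamical system on $\{0,1\}^\N$ follows immediately from Theorem~\ref{theorem.obstruction}. For the converse, the plan is to refine the construction of Proposition~\ref{prop.cantor.real} in two stages: first realize a $\Pi_1$-approximating sequence of entropies by uniformly computable surjective self-maps of $\{0,1\}^\N$ coming from mixing decidable subshifts via Lemma~\ref{recoding}, then glue these together along a clopen decomposition of $\{0,1\}^\N$ that shrinks to a single fixed point.

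Write $h = \sup_i a_i$ where $(a_i)$ is a uniformly $\Pi_1$-computable sequence of positive reals (if $h > 0$, replacing $a_i$ by $\max(a_i, 2^{-i})$ ensures positivity without changing the supremum; the case $h = 0$ is handled by taking $f = \id$). From monotone rational upper approximations to $a_i$ I would computably choose an integer $N_i$ with $a_i / N_i \in (0, \log 2]$, then apply Theorem~\ref{theorem.hellouin} to obtain a uniform family of decidable mixing subshifts $Z_i \subset \{0,1\}^\N$ with $h(Z_i) = a_i / N_i$. Setting $g_i = \kappa_i \circ \sigma^{N_i} \circ \kappa_i^{-1}$ with $\kappa_i$ from Lemma~\ref{recoding}, Lemma~\ref{lemma.mixing.decidable} combined with taking the $N_i$-th power makes each $g_i : \{0,1\}^\N \to \{0,1\}^\N$ a surjective computable map with $h(g_i) = N_i \cdot h(Z_i) = a_i$, and the family $(g_i)$ is uniformly computable.

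The map $f$ is then obtained by assembly. Consider the clopen partition $\{0,1\}^\N = \bigsqcup_{i \ge 0} [1^i 0] \sqcup \{1^\infty\}$, identify each cylinder $[1^i 0]$ with $\{0,1\}^\N$ via $1^i 0 w \leftrightarrow w$, and define $f$ to act as $g_i$ on $[1^i 0]$ under this identification while fixing $1^\infty$. Continuity at $1^\infty$ holds because $f([1^i 0]) \subseteq [1^i 0]$ and these cylinders shrink to $\{1^\infty\}$; surjectivity of $f$ follows from surjectivity of each $g_i$; computability is handled by an algorithm that reads the prefix of $x$ until either a $0$ is found at some position $i < n$ (upon which it invokes the uniform algorithm for $g_i$) or $n$ leading $1$'s are read (in which case it outputs $1^n$, consistent with $f$ preserving $[1^n]$).

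The step that requires the most care is the entropy identity $h(f) = \sup_i h(g_i)$. The inequality $\ge$ is immediate from invariance of each $[1^i 0]$. For $\le$, any finite open cover $\mathcal{U}$ of $\{0,1\}^\N$ has some member $U_0 \ni 1^\infty$, and compactness forces all but finitely many cylinders $[1^i 0]$ to lie inside $U_0$; let $I_0$ denote the exceptional indices. Orbits contained in an absorbing cylinder $[1^i 0] \subseteq U_0$ are covered by the single set $\bigcap_{k<n} f^{-k}(U_0)$, yielding $N_n(\{0,1\}^\N, f, \mathcal{U}) \le 1 + \sum_{i \in I_0} N_n([1^i 0], f, \mathcal{U}|_{[1^i 0]})$. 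Dividing by $n$, passing to the limit, and then taking a supremum over $\mathcal{U}$ gives $h(f) \le \sup_i h(g_i) = h$, closing the argument.
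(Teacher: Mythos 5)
Your proof is correct and follows essentially the same route as the paper's: realize a monotone $\Pi_1$-approximating sequence for $h$ by decidable mixing subshifts via Theorem~\ref{theorem.hellouin}, transport them to surjective computable self-maps of $\{0,1\}^{\N}$ using Lemma~\ref{recoding} and Lemma~\ref{lemma.mixing.decidable}, and glue along a shrinking clopen partition ($[1^i0]$ in your version, $[0^k1]$ in the paper's). Your use of the powers $\sigma^{N_i}$ to bring each target entropy below $\log 2$ before invoking Theorem~\ref{theorem.hellouin} is a small but welcome extra precaution that the paper's proof passes over silently.
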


\noindent\textit{Outline of proof.} The main idea is this: given $0<h = \sup_n \ h_n$ with $h_{n}$ a computable sequence of $\Pi_{1}$-computable numbers, one can  produce, uniformly in $n$, a mixing decidable subshift whose entropy is $h_{n}$. By applying Lemma \ref{recoding}, we can 
construct a system $f_n$ of the Cantor set with entropy $h_{n}$. Since the subshift is mixing, the obtained function is surjective. One can then construct a surjective function of the Cantor set whose entropy is $h$ 
by decomposing the Cantor set into 
a countable union of smaller and smaller copies of it, 
and defining the global dynamics $f$ to be like $f_n$ on the $n$th copy.  Now to the details. 

\begin{proof}
The case $h=0$ is ruled out by seeing 
this number as entropy of the identity function, 
which is onto.
Let $h > 0$ be a $\Sigma_2$-computable number, 
and denote $(r_{m,n})_{m,n}$ a computable double sequence 
of rational numbers such that 
$h = \sup_n \inf_m r_{m,n}$.
For all $n$, we denote $h_n = \inf_m r_{m,n}$.
Without loss of generality, one 
can assume that $(h_n)_n$ is 
non-decreasing by replacing $(r_{m,n})_{m,n}$
$r'_{m,n} = \max_{k \le n} r_{m,k}$, which is 
a computable sequence, and 
$(h'_n)_n$ is increasing, where 
$h'_n = \inf_m r'_{m,n}$, since 
$h'_n = \max_{k \le n} h_k$. 
For the same reason, $h = \sup_n h'_n$.

Let us denote $n_0$ some integer 
such that for all $n \ge n_0$, $h_n >0$.

\begin{enumerate} 

\item \textbf{Realization of the numbers 
$h_n$:}

One can, given an integer $n \ge n_0$, compute 
the code $\tau(n)$ of an algorithm 
which decides the language of a decidable and mixing subshift $Z_{\tau(n)}$ 
of $\{0,1\}^{\mathbb{N}}$ whose entropy is $h_n$: 
this is a consequence of Theorem~\ref{theorem.hellouin}. 
Let us denote $f_n : \{0,1\}^{\mathbb{N}} \rightarrow \{0,1\}^{\mathbb{N}}$ the map
\[f_n \equiv \kappa_{\tau(n)} \circ \sigma \circ \kappa_{\tau(n)}^{-1}.\]

\item \textbf{Surjectivity of
$f_n$:}

For all $n$, since $Z_{\tau(n)}$ is mixing and decidable, the map $\sigma : Z_{\tau(n)} \rightarrow Z_{\tau(n)}$ is surjective (Lemma~\ref{lemma.mixing.decidable}). 
As a consequence, $f_n$ is surjective.

\item \textbf{Realization of $h$:}

Let us define $f : \{0,1\}^{\mathbb{N}} \rightarrow \{0,1\}^{\mathbb{N}}$ by $f(0^k 1 x) = 0^k 1 f_k(x)$ 
for all $k \in \N$. 
We have \[h(f)=\sup_n h_n = h.\] Indeed, 
for all $l$, 
the entropy of $f$ relatively to the cover 
$\mathcal{U}_l$ is the supremum 
of the numbers $h_k$, $k \le l$, 
where $\mathcal{U}_l$ is the open cover 
of all the cylinders corresponding to length 
$l$ words on $\{0,1\}$.

Since the topological entropy of $f$ is the 
supremum over $l$ 
of the entropies of $f$ relatively to 
$\mathcal{U}_l$, this entropy is 
$\sup_l \sup_{k \le l} h_k = \sup_l h_l$.

\item \textbf{Surjectivity and computability 
of $f$:}
Since for all $n$, $f_n$ is surjective, $f$ 
is also surjective. Moreover, $f$ is computable: 
indeed, in order to compute the $n$ first bits 
of $f(x)$ given $x$, one first localise 
the first $1$ in the sequence $(x_0 , ... , x_{n-1})$. If there is no $1$, the $n$ first 
bits of $f(x)$ are all $0$. Else, let us denote 
$k$ the first such that $x_k = 1$. 
One has to compute the $\phi(n,n-k)$ 
first bits of $(x_{k+1+j})_{j \ge 0}$ in order 
to compute the $n-k$ first bits of $f(x)$ that 
are not determined. Since 
$n \mapsto \max_{k \le n} \phi(k,n-k)$. 
is computable, this
yields the computability of $f$.
\end{enumerate}
\end{proof}

\section{Computable systems over the unit interval}
\label{section.continuous.dynamics}

In this section we present the proof of Theorems C and D. We start by characterizing the possible entropies of the whole class of computable maps over $[0,1]$.

\subsection{\label{sec.characterization.interval}The class of all computable maps}

Our goal here is to prove the following. 

\begin{theorem} 
\label{th.characterization.interval}
The numbers that are the entropy of a computable dynamical system 
$([0,1],f)$ are exactly the $\Sigma_1$-computable non-negative numbers.
\end{theorem}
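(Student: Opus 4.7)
\emph{Upper bound.} The plan is to use Misiurewicz's horseshoe characterization of entropy for continuous interval maps. Say that a continuous $g:[0,1]\to [0,1]$ admits an \emph{$s$-horseshoe} if there are pairwise disjoint closed intervals $J_1,\dots,J_s\subset [0,1]$ with $g(J_i)\supset J_1\cup\cdots\cup J_s$ for all $i$, and let $s(g)$ denote the largest such $s$. Misiurewicz's theorem yields $h(f)=\lim_n \tfrac{1}{n}\log s(f^n)$; since any $s$-horseshoe of $f^n$ semi-conjugates $f^n$ to the full $s$-shift, one has $\tfrac{1}{n}\log s(f^n)\le h(f)$ for every $n$, so the limit equals the supremum: $h(f)=\sup_n \tfrac{1}{n}\log s(f^n)$. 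I would then show, for computable $f$, that
\[\Omega(f):=\{(n,s)\in\N^2 : f^n\text{ has an }s\text{-horseshoe}\}\]
is recursively enumerable by giving a semi-decidable intermediate-value witness: pairwise disjoint closed rational intervals $[a_1,b_1],\dots,[a_s,b_s]$ together with rational points $u_{i,j},v_{i,j}\in(a_i,b_i)$ satisfying the strict inequalities $f^n(u_{i,j})<a_j$ and $f^n(v_{i,j})>b_j$ for all $i,j$. Strict inequalities between computable reals are semi-decidable, and every genuine horseshoe can be slightly shrunk to verify such strict conditions, so $\Omega(f)$ is r.e.\ and $h(f)=\sup\{\tfrac{1}{n}\log s:(n,s)\in\Omega(f)\}$ is $\Sigma_1$-computable.

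\emph{Realizability.} Write $h=\sup_n q_n$ for a computable non-decreasing sequence of non-negative rationals with $q_0=0$. Decompose $[0,1]=\{0\}\cup\bigcup_{n\ge 0}I_n$, where $I_n=[2^{-(n+1)},2^{-n}]$. On each $I_n$, place a uniformly computable piecewise affine self-map $f_n:I_n\to I_n$ fixing both endpoints and with $h(f_n)=q_n$; such maps exist for every non-negative rational (for instance, rescale a full tent map with $k_n$ branches so that an $m_n$-th iterate realizes $q_n=\tfrac{\log_2 k_n}{m_n}$, or use a suitable continuous variant of a $\beta$-transformation). Set $f(0)=0$ and $f|_{I_n}=f_n$. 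Continuity at $0$ follows from $f(I_n)\subset [0,2^{-n}]$, and computability is uniform because from a rational approximation of $x$ one can locate the cell $I_n$ containing $x$ (the shared dyadic endpoints are fixed points of both adjacent cells). As each $I_n$ is $f$-invariant and diameters shrink to $0$, the standard additivity of entropy along a countable invariant decomposition gives $h(f)=\sup_n h(f|_{I_n})=\sup_n q_n=h$.

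\emph{Main obstacle.} The conceptually delicate part is the upper bound: the generic $\Sigma_2$ formula of Theorem~\ref{theorem.obstruction} does not collapse for any abstract reason, and one genuinely needs the one-dimensional horseshoe description of entropy in order to drop the quantifier complexity to $\Sigma_1$. Once this characterization is granted, the semi-decidability step via IVT-witnesses is routine computable analysis; in the realizability direction the only care required is in the uniformity of the family $(f_n)$ and the standard decomposition of entropy along the shrinking invariant cells.
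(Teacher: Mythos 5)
Your proposal is correct and follows essentially the same route as the paper: the upper bound via Misiurewicz's horseshoe theorem together with a semi-decidable witness for the covering relations (the paper semi-decides containment of the effective compact union $\bigcup_i \overline{I_i}$ in the computable image interval $f^n(\overline{I_i})$, while you use rational IVT test points --- both work), and realizability via a countable decomposition of $[0,1]$ into shrinking invariant intervals carrying uniformly computable constant-slope piecewise affine maps whose entropies increase to $h$. One small caveat: ``rescaling a full tent map so that an $m_n$-th iterate realizes $q_n$'' does not literally parse (interval maps have no canonical iterative roots), but your alternative --- a continuous constant-slope map with slope $\pm 2^{q_n}$, whose entropy is $q_n$ by the variation/lap-number argument --- is exactly the paper's building block and closes that step.
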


The proof is split into Propositions \ref{prop.obstruction.int} and \ref{prop.realisation.int} below. We will start by proving the computational \emph{obstruction}, i.e., that no computable system over $[0,1]$ can have topological entropy whose arithmetical complexity is beyond $\Sigma_{1}$.  For this, we need first to briefly recall some properties and results about topological entropy that will be used in the proof of Proposition  \ref{prop.obstruction.int}, which will mostly rely on the notion of \emph{horseshoe} ( see Definition \ref{def.horseshoe}). 



\begin{definition} \label{def.monotonic}
We say that a continuous map $f : 
[0,1] \rightarrow [0,1]$ 
is piecewise monotone when there exists $k$ and some numbers 
$x_1=0 < ... < x_k=1$ such that on each $[x_i , x_{i+1}]$ 
the restriction of $f$ on this interval is monotone.
When $f$ is strictly monotone on each interval 
$[x_i , x_{i+1}]$,we say that 
$f$ is strictly piecewise monotone.
\end{definition}

\begin{lemma} \label{lem.monotonic.iterate}
Let $f : [0,1] \rightarrow [0,1]$ be a 
continuous strictly piecewise 
monotone function. For all $n$, $f^n$ is 
strictly piecewise monotone.
\end{lemma}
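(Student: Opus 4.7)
The plan is to proceed by induction on $n$. The base case $n=1$ is immediate from the hypothesis. For the inductive step, assuming that $f^n$ is strictly piecewise monotone with breakpoints $y_1 = 0 < y_2 < \dots < y_m = 1$, I would show that $f^{n+1} = f \circ f^n$ admits a finite refinement of this partition on which it is strictly monotone piece by piece.

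The key observation is that on each piece $[y_i, y_{i+1}]$, the function $f^n$ is a continuous strictly monotone bijection onto the image interval $J_i = f^n([y_i, y_{i+1}])$. On $J_i$, the function $f$ is itself strictly piecewise monotone: its breakpoints are those $x_j$ (from the given partition $0 = x_1 < \dots < x_k = 1$ for $f$) that lie in the interior of $J_i$. Pulling these breakpoints back through the bijection $f^n\big|_{[y_i, y_{i+1}]}$ produces finitely many points $z_{i,1} < \dots < z_{i,r_i}$ inside $[y_i, y_{i+1}]$, which subdivide this piece into subintervals. On each such subinterval, $f^n$ is strictly monotone by the inductive hypothesis, and maps into a single monotonicity piece of $f$, so $f^{n+1}$ is the composition of two strictly monotone continuous maps, which is itself strictly monotone and continuous.

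Refining the partition $(y_i)_i$ by all the pullback points $(z_{i,s})_{i,s}$ gives a finite partition of $[0,1]$ on which $f^{n+1}$ is strictly piecewise monotone, completing the induction.

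I do not expect any serious obstacle here: the argument is standard and the only delicate point is to remember to use \emph{strict} monotonicity of $f^n$ on each piece (this is what guarantees bijectivity onto $J_i$ and hence that the pullback points are well defined and finite in number) and to note that the composition of two strictly monotone continuous maps is again strictly monotone. Endpoint issues (breakpoints of $f$ lying exactly at endpoints of $J_i$) cause no trouble because such points can simply be absorbed into the existing breakpoints of the refined partition.
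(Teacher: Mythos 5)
Your proof is correct, and in fact it is more careful than the one in the paper. Both arguments proceed by induction and reduce to showing that a composition of strictly piecewise monotone maps is strictly piecewise monotone, but the refinement step differs. The paper partitions $[0,1]$ by taking the \emph{union} of the breakpoint sets of the two maps being composed and asserts that on each resulting subinterval both maps are strictly monotone, hence so is the composition. As written this does not suffice: for $f\circ g$ to be monotone on an interval $I$ one needs $f$ to be monotone on $g(I)$, not on $I$ itself, so the breakpoints of the outer map must be \emph{pulled back} through the inner map rather than merely adjoined (the composition of the tent map with itself already shows that the union of breakpoints misses the new turning points at $1/4$ and $3/4$). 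Your proposal does exactly this pullback, using strict monotonicity of $f^n$ on each piece to guarantee that the preimages of the breakpoints of $f$ form a finite set, and your remark about endpoint cases is the right level of care. So you take the same inductive route, but your version of the refinement is the one that actually closes the argument.
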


\begin{proof}
We use a recurrence argument. Since this property is true for $n=1$, 
this is sufficient to prove that if 
$f$ and $g$ are two strictly piecewise 
monotone functions, then $f \circ g$ is
 also strictly piecewise monotone.
Let $f,g$ be two such functions.
There exists $k$ and some numbers 
$x_1=0 < ... < x_k=1$ such that on each $[x_i , x_{i+1}]$ 
the restriction of $f$ on this interval is 
increasing or decreasing, and 
there exists $k'$ and some numbers 
$x'_1=0 < ... < x'_{k'}=1$ such that on each 
$[x'_i , x'_{i+1}]$ 
the restriction of $g$ on this interval 
is strictly monotone. 
Let us denote $(x'')_{i \le k''}$ 
the finite increasing 
sequence such that 
\[\{x''_i\} = \{x_i : i \le k\} \bigcup 
\{x'_i : i \le k'\}.\]
On each of these intervals, both $f$ and 
$g$ are strictly monotone, and as a consequence, 
$f \circ g$ is also strictly monotone.
\end{proof}

The following notion is a powerful tool to analyse dynamical systems on the interval, and in particular the topological entropy of these systems. 

\begin{definition} \label{def.horseshoe} 
Let $f$ be a continuous map of the interval $[0,1]$. A $(p,n)$ horseshoe of $f$ 
is a finite collection of compact disjoints intervals $J_1 , J_2 , ... , J_p$ such that for all $i$, $f^n (J_i)$ contains 
a neighbourhood of $\cup_j J_j$. The horseshoe is 
called monotone when on every $J_j$, $f^n$ is strictly monotone.
\end{definition}

The proof of the following theorem can be found in~\cite{Ruette15}. This theorem will be used for the proof of the obstruction.

\begin{theorem}[\cite{MisiurewiczMonotone}] 
\label{th.misiurewicz1}
Let $f$ be a continuous map of the interval $[0,1]$. Then \[h (f) = 
\sup_{(p,n) \in \Delta} \frac{\log_2 (p)}{n},\]
where $(p,n) \in \Delta \Leftrightarrow f$ 
admits a $(p,n)$-horseshoe. When 
$f$ is a piecewise monotone map, 
\[h (f) = 
\sup_{(p,n) \in \Delta'} \frac{\log_2 (p)}{n},\]
where $(p,n) \in \Delta' \Leftrightarrow f$ 
admits a $(p,q)$ monotone horseshoe.
The number $\log_2 (p)/n$ is called the entropy 
the $(p,n)$-horseshoes.
\end{theorem}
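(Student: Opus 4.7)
The plan is to prove the two inequalities $\ge$ and $\le$ separately.

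For $\ge$, I will argue that a single $(p,n)$-horseshoe forces the entropy of $f^n$ to be at least $\log_2 p$. Given disjoint compact intervals $J_1,\ldots,J_p$ with $f^n(J_i)$ containing a neighbourhood of $\bigcup_j J_j$, consider the compact $f^n$-invariant set
\[K \;=\; \bigcap_{k\ge 0} f^{-nk}\!\Bigl(\bigcup_{i=1}^p J_i\Bigr).\]
A standard nested compact set argument, using the inclusion $f^n(J_i)\supset \bigcup_j J_j$, shows that for every $\omega\in\{1,\ldots,p\}^{\N}$ there exists $x\in K$ with $f^{nk}(x)\in J_{\omega_k}$ for every $k\ge 0$. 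The resulting itinerary map $\pi\colon K\to\{1,\ldots,p\}^{\N}$ is a continuous surjection satisfying $\pi\circ f^n = \sigma\circ\pi$, exhibiting the full $p$-shift as a topological factor of $(K,f^n|_K)$. Since entropy does not increase under factors, $h(f^n|_K)\ge \log_2 p$, so $h(f)\ge \log_2(p)/n$. Taking the supremum over $(p,n)\in\Delta$ yields the lower bound, and the same argument applied to a monotone horseshoe yields the analogous statement for $\Delta'$.

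For $\le$, the substantive content of the theorem, I would first handle the piecewise monotone case by invoking the Misiurewicz--Szlenk identity $h(f)=\lim_n \frac{1}{n}\log_2 \ell(f^n)$, where $\ell(g)$ denotes the number of monotone branches of $g$ (well defined for $f^n$ by Lemma~\ref{lem.monotonic.iterate}). From the $\ell(f^n)$ monotone branches of $f^n$, a pigeonhole argument on their images selects a sub-family of $p$ intervals whose $f^n$-images all contain a common interval $L$, producing a monotone $(p,n)$-horseshoe with $\log_2(p)/n$ arbitrarily close to $h(f)$ for $n$ large. For general continuous $f$, I would approximate $f$ uniformly by piecewise monotone maps $f_k$. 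Since the horseshoe condition is expressed by finitely many strict inequalities at finitely many points, a horseshoe of $f_k$ with small $\|f-f_k\|_\infty$ transfers to a horseshoe of $f$ of the same combinatorial type, and one combines this with an estimate of the form $h(f)\le \liminf_k h(f_k)$ obtained by choosing the approximants to be generators.

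The main obstacle will be the general continuous case of $\le$: topological entropy is only upper semi-continuous (not continuous) in the $C^0$-topology, so the approximants $f_k$ must be designed carefully, for instance as Markov approximants adapted to an $(n,\varepsilon)$-separated structure of $f$, so that $h(f_k)$ stays close to $h(f)$ in the limit. The piecewise monotone case itself rests on the non-trivial Misiurewicz--Szlenk lap-growth identity, which is at the heart of Misiurewicz's original argument.
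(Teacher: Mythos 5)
First, note that the paper does not prove Theorem~\ref{th.misiurewicz1} at all: it is quoted from \cite{MisiurewiczMonotone}, with the proof explicitly deferred to \cite{Ruette15}. So you are attempting to reprove a deep imported result rather than matching an argument in the text. Your lower bound is fine and standard: the itinerary semiconjugacy from the maximal invariant set of a $(p,n)$-horseshoe onto the full $p$-shift does give $h(f)\ge \log_2(p)/n$ (the disjointness of the $J_i$ makes the itinerary well defined, and surjectivity follows from the nested-interval argument). The problem is the upper bound, which is exactly the substantive content of Misiurewicz's theorem, and your sketch has two genuine gaps there.

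In the piecewise monotone case, the pigeonhole step does not produce a horseshoe in the sense of Definition~\ref{def.horseshoe}. Selecting $p$ laps $K_1,\dots,K_p$ of $f^n$ whose images all contain a common interval $L$ only gives $f^n(K_i)\supset L$; for a horseshoe you need $f^n(K_i)$ to contain a neighbourhood of $\bigcup_j K_j$, which requires the chosen intervals to sit \emph{inside} $L$ (or inside each other's images), and the laps with large image have no reason to be located in $L$. Arranging this self-covering configuration is precisely where the combinatorial work of Misiurewicz's proof lies (one has to pass to higher iterates and analyse the $f$-covering graph of the laps); it cannot be dispatched by pigeonhole on images alone. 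For the general continuous case, the reduction via $C^0$ approximation is both misstated and circular: entropy of interval maps is \emph{lower} semi-continuous in the $C^0$ topology (not upper, as you write), and that lower semi-continuity is itself classically \emph{deduced from} the horseshoe theorem, since horseshoes persist under small perturbations. An estimate of the form $h(f)\le\liminf_k h(f_k)$ for piecewise monotone approximants $f_k$ is therefore not available as an independent input, and without it the passage from the piecewise monotone statement to the general one collapses. As written, the proposal establishes only the easy inequality $h(f)\ge\sup_{(p,n)\in\Delta}\log_2(p)/n$.
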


\begin{definition}
Let $f : [0,1] \rightarrow [0,1]$ be a continuous map. 
Its \define{variation} 
is the following number 
\[V(f)=\sup \sum_{k=1}^n |f(x_k) - f(x_{k+1})|,\]
where the finite sequence $(x_k)_{k}$ is such that 
$x_k = 0 < ... < x_{n+1} = 1$.
\end{definition}

The following lemma is stated as 
Corollary 15.2.14 in~\cite{Katok}: 

\begin{lemma}[\cite{Katok}] 
\label{lem.variation}
Let $f : [0,1] \rightarrow [0,1]$ be a 
continuous map. Then we have the following 
equality: 
\[h(f)=\lim_n \frac{\log_2 (V(f^n))}{n}.\]
\end{lemma}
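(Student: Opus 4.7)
The plan is to prove the two inequalities in turn, using Theorem \ref{th.misiurewicz1} (the horseshoe characterisation of entropy) as the bridge between variation and entropy. Existence of the limit on the right-hand side then follows a posteriori from the two matching bounds.

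For the lower bound $h(f) \leq \liminf_n \frac{\log_2 V(f^n)}{n}$, I would start from a $(p,m)$-horseshoe $J_1,\ldots,J_p$ of $f$ and set $\delta = \mathrm{diam}(\bigcup_j J_j) > 0$. Iterating the horseshoe property (within each $J_i$, select $p$ sub-intervals mapped by $f^m$ onto $J_1, \ldots, J_p$, and apply recursively) produces for every $k \geq 1$ a $(p^k, km)$-horseshoe whose $p^k$ disjoint sub-intervals each force $f^{km}$ to oscillate by at least $\delta$. Summing over these disjoint sub-intervals yields $V(f^{km}) \geq \delta \cdot p^k$; taking $\log_2$, dividing by $km$, letting $k \to \infty$, and finally taking the supremum over $(p,m)$ concludes this direction via Theorem \ref{th.misiurewicz1}.

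For the upper bound $\limsup_n \frac{\log_2 V(f^n)}{n} \leq h(f)$, I would approximate $f$ by piecewise monotone maps $g_\epsilon$ chosen so that the lap numbers $\ell(g_\epsilon^n)$ dominate $V(f^n)$ up to an error controlled by $\epsilon$. For piecewise monotone maps, the Misiurewicz--Szlenk identity $h(g) = \lim_n \frac{\log_2 \ell(g^n)}{n}$ together with the trivial bound $V(g^n) \leq \ell(g^n)$ (each monotone piece contributes at most $1$ to the variation since $g$ maps into $[0,1]$) yields $\limsup_n \frac{\log_2 V(f^n)}{n} \leq h(g_\epsilon)$. The argument is then completed by showing $h(g_\epsilon) \to h(f)$ as $\epsilon \to 0$ via a further horseshoe-based comparison.

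The main obstacle is precisely this upper bound: large variation of $f^n$ can arise from many small-amplitude oscillations which individually fail to produce horseshoes, so a naive counting argument does not work. The resolution, as carried out in detail in Katok--Hasselblatt (Corollary 15.2.14), is to aggregate these small oscillations using the intermediate value theorem into oscillations of amplitude bounded below, and to match the scales of $\epsilon$, the amplitude threshold, and the iterate count $n$ simultaneously. This delicate multi-scale bookkeeping is the technical heart of the argument.
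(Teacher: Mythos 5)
The paper offers no proof of Lemma~\ref{lem.variation}: it is simply quoted as Corollary~15.2.14 of \cite{Katok}, so there is no internal argument to measure yours against. Taken on its own terms, your proposal is a sensible roadmap but not a proof, and both halves have gaps. In the lower bound, iterating a $(p,m)$-horseshoe gives $V(f^{km})\ge \delta\, p^{k}$ only along the arithmetic progression $n=km$, which yields $\limsup_n \tfrac{1}{n}\log_2 V(f^n)\ge \tfrac{\log_2 p}{m}$ but not the claimed bound on the $\liminf$; to conclude you must also treat $n=km+r$ (for instance by observing that $f^{r}(J_i)$ has diameter bounded below independently of $k$, so each of the $p^{k}$ disjoint sub-intervals still contributes a fixed positive oscillation to $f^{km+r}$). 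Without this, combining with the upper bound only shows that the $\limsup$ equals $h(f)$, not that the limit exists.

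The genuine gap is the upper bound, and you acknowledge it yourself: the step that aggregates many small-amplitude oscillations into oscillations that produce horseshoes is exactly the content of Corollary~15.2.14 of \cite{Katok}, so deferring it to that reference makes the argument circular as a proof of the lemma. Moreover, the approximation scheme as described is not sound: topological entropy of interval maps is only lower semicontinuous in the $C^{0}$ topology, so a piecewise monotone $g_\epsilon$ that is merely uniformly close to $f$ can have entropy far exceeding $h(f)$ (tiny steep wiggles create small-scale horseshoes), which undermines the claim $h(g_\epsilon)\to h(f)$; and there is no reason for $\ell(g_\epsilon^{n})$ to dominate $V(f^{n})$ uniformly in $n$ (impossible already for $n=1$ when $V(f)=\infty$). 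For the purposes of this paper the lemma is only ever applied to piecewise linear maps of constant absolute slope $s$, where $V(f^{n})=s^{n}$ and the identity reduces to the elementary piecewise monotone case; but as a standalone proof of the statement for all continuous maps, your proposal leaves the essential step unproved.
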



We are now ready to prove the obstruction part: 

\begin{proposition} 
\label{prop.obstruction.int} 
For all $f : [0,1] \rightarrow [0,1]$, $h (f)$ 
is $\Sigma_1$-computable.
\end{proposition}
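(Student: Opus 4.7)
The plan is to apply Misiurewicz's horseshoe characterization (Theorem~\ref{th.misiurewicz1}), which expresses $h(f)$ as the supremum $\sup_{(p,n)\in\Delta}\log_2(p)/n$ over all pairs $(p,n)$ for which $f$ admits a $(p,n)$-horseshoe. The key observation is that a horseshoe is a \emph{finite} certificate and that its defining condition is an \emph{open} property, so it becomes semi-decidable once one restricts to horseshoes whose intervals have rational endpoints.

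First I would fix the following notion of a \emph{rational} $(p,n)$-horseshoe: a tuple $(J_1,\dots,J_p,n)$ in which the $J_i=[a_i,b_i]$ are pairwise disjoint rational closed subintervals of $[0,1]$ and
\[
\min_{x\in J_i} f^n(x) \; < \; \min_{j} a_j \qquad \text{and} \qquad \max_{x\in J_i} f^n(x) \; > \; \max_{j} b_j
\]
hold for every $i$. Since $f^n(J_i)$ is the compact interval $[\min_{J_i}f^n,\max_{J_i}f^n]$, these strict inequalities are exactly the condition that $f^n(J_i)$ contain an open neighbourhood of $\bigcup_j J_j$. Because $f$ is computable, so is every iterate $f^n$, and the extrema of a computable real function on a rational closed subinterval of $[0,1]$ are uniformly computable real numbers; the strict comparison of such a real to a rational is semi-decidable. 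Hence the set $D$ of valid rational horseshoe data is r.e., and so is the set of rationals $E=\{\log_2(p)/n : (J_1,\dots,J_p,n)\in D\}\cup\{0\}$. Since each element of $D$ witnesses an actual horseshoe, Theorem~\ref{th.misiurewicz1} already gives $\sup E \le h(f)$.

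The remaining step is an approximation argument showing $\sup E \ge h(f)$: any $(p,n)$-horseshoe with arbitrary real endpoints can be perturbed into a rational one with the same $p$ and $n$. Indeed, if $J_1,\dots,J_p$ is a real horseshoe, the strict inequalities above hold with some positive slack; replacing each $J_i$ by a sufficiently close rational subinterval $J'_i\subset J_i$ affects $\min_{J'_i}f^n$, $\max_{J'_i}f^n$, $\min_j a'_j$ and $\max_j b'_j$ by arbitrarily small amounts by continuity of $f^n$, so all strict inequalities survive. Combined with Theorem~\ref{th.misiurewicz1}, this gives $h(f)=\sup E$, which is $\Sigma_1$-computable by construction.

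The only delicate point I foresee is making the semi-decidability of the horseshoe condition rigorous, i.e.\ verifying uniformly in $(J_1,\dots,J_p,n)$ that $\min_{J_i}f^n$ and $\max_{J_i}f^n$ are computable reals and that their strict comparison with a rational is semi-decidable; this is essentially standard computable analysis over the recursively compact space $[0,1]$, but it must be carried out with some care to guarantee uniformity. The approximation step is then essentially routine since the horseshoe relation is defined by finitely many strict inequalities in continuous quantities of the defining parameters.
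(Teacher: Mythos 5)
Your proposal is correct and follows essentially the same route as the paper: both reduce to Misiurewicz's horseshoe characterization (Theorem~\ref{th.misiurewicz1}), semi-decide the horseshoe condition for intervals with rational endpoints (you via computability of $\min_{J_i}f^n$ and $\max_{J_i}f^n$, the paper via effective closedness of $\bigcup_i \overline{I_i}$ and Proposition~\ref{prop.equivalence.comp.closed.subset} --- two equivalent standard facts), and conclude with the same perturbation argument showing rational horseshoes realize the same supremum. No gap to report.
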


\begin{remark}
By convention, $+\infty$ is considered as a $\Sigma_1$-computable number.
\end{remark}

\begin{proof}

For $I$ and $J$ two intervals, we write $I<J$ when for all $x \in I$ 
and $y \in J$, $x<y$. Let $I_{1},\dots,I_{p}$ be a finite collection of rational intervals such that $I_1 < ... < I_p$ (in particular, the compact intervals $\overline{I_i}$ are disjoint). Our goal is to show that there is an algorithm to semi-decide whether $\overline{I_{1}},\dots,\overline{I_{p}}$ is a $(n,p)$ horseshoe of $f$, for some $n$.  That is, whether for all $i\leq p$ there exists $\epsilon>0$ such that $\mathcal{I}^{\epsilon}\subset f^{n}(\overline{I_{i}})$, 
where $$\mathcal{I}=\bigcup_{i=1}^{p} \overline{I_{i}}$$ and $A^{\epsilon}$ stands for an $\epsilon$-neighborhood of a set $A$.  Note that since $f$ is computable, $f^{n}(\overline{I_{i}})$ is simply an interval with computable endpoints, say $a$ and $b$, which may or may not belong to $f^{n}(\overline{I_{i}})$. Since $\mathcal{I}$ is compact, we have that 
$$
\mathcal{I}^{\epsilon}\subset f^{n}(\overline{I_{i}}) \text{ for some } \epsilon>0 \iff \mathcal{I}\subset ]a,b[. 
$$
Moreover,  the set $\mathcal{I}$ is also an effective closed set and therefore by Proposition \ref{prop.equivalence.comp.closed.subset} we can semi-decide whether it is contained in a given finite union of open rational intervals. Since $a$ and $b$ are computable, there is a recursively enumerable $L\subset \N$ such that 
$$
]a,b[=\bigcup_{l\in L}]l_{l},r_{l}[,
$$
from which the desired semi-decidability property for horseshoes follows. We now can use this property to algorithmically enumerate all rational horseshoes of $f$. Since the entropies $\frac{\log_2(p)}{n}$  of these horseshoes are computable numbers, their supremum is a $\Sigma_{1}$-computable number.  To finish the proof, it suffices to note that an arbitrary $(p,n)$ horseshoe can be turned into a rational $(p,n)$ horseshoe by a sufficiently small modification, and therefore this last supremum equals the entropy of $f$ by Theorem~\ref{th.misiurewicz1}. 
\end{proof}

The following proposition gives a realization of all the $\Sigma_1$-computable numbers, thus completing the proof of Theorem \ref{th.characterization.interval}.

\begin{proposition} \label{prop.realisation.int}
Let $h\geq 0$ be a $\Sigma_1$-computable real number. 
There exists a computable function 
$f : [0,1] \rightarrow [0,1]$ 
such that the entropy of $([0,1],f)$ is $h$. 
\end{proposition}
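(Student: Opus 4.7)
My plan is to realise $h$ as a countable supremum of entropies supported on disjoint invariant subintervals of $[0,1]$ accumulating at a fixed point. First, write $h = \sup_{k} q_{k}$ with $(q_{k})_{k \geq 1}$ a computable non-decreasing sequence of non-negative rationals. Partition $[0,1]$ as $\{1\} \sqcup \bigsqcup_{k \geq 1} J_{k}$ with $J_{k} = [1 - 2^{-k+1}, 1 - 2^{-k}]$, and set $f(1) = 1$ and $f|_{J_{k}}$ equal to an affine copy on $J_{k}$ of a piecewise linear self-map $g_{k} : [0,1] \to [0,1]$ satisfying $g_{k}(0) = 0$, $g_{k}(1) = 1$ and $h(g_{k}) \approx q_{k}$. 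The boundary conditions on $g_{k}$ make the endpoints of $J_{k}$ fixed, ensuring continuity across the gluings; continuity at $x^{\ast} = 1$ follows from $f(J_{k}) \subseteq J_{k}$ together with $\mathrm{diam}(J_{k}) = 2^{-k} \to 0$.

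For the building blocks $g_{k}$, I would use piecewise linear zigzags of constant absolute slope. Given rational $q_{k} \geq 0$, pick a rational lower approximation $s_{k} \geq 1$ of $2^{q_{k}}$ satisfying $q_{k} - 2^{-k} \leq \log_{2} s_{k} \leq q_{k}$, and take an odd integer $m_{k} \geq s_{k}$. Then $g_{k}$ is chosen as a symmetric $m_{k}$-lap zigzag with slopes alternating between $\pm s_{k}$, the piece lengths adjusted by an explicit linear computation so that $g_{k}(0) = 0$, $g_{k}(1) = 1$, and the graph stays in $[0,1]$. All iterates $g_{k}^{n}$ are then piecewise linear with constant absolute slope $s_{k}^{n}$, so $V(g_{k}^{n}) = s_{k}^{n}$ and Lemma~\ref{lem.variation} yields $h(g_{k}) = \log_{2} s_{k}$. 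Uniform computability of the family $(g_{k})$ together with the explicit description of the partition delivers computability of $f$ (for a point near $1$, the invariance of the $J_{k}$'s automatically controls the error).

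The entropy computation splits into two halves. The lower bound $h(f) \geq h$ follows from the monotonicity $h(f) \geq h(f|_{J_{k}}) = \log_{2} s_{k}$ for every $k$, combined with $\sup_{k} \log_{2} s_{k} = h$ (which holds because $q_{k} - 2^{-k} \leq \log_{2} s_{k} \leq q_{k}$ and $q_{k} \nearrow h$). For the upper bound, given any finite open cover $\mathcal{U}$ of $[0,1]$, let $\delta > 0$ be a Lebesgue number; then every $J_{k}$ with $2^{-k} < \delta$ is contained in a single element of $\mathcal{U}$ and therefore contributes at most one ``constant-itinerary'' atom to $\bigvee_{j<n} f^{-j}\mathcal{U}$. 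The remaining indices $k \leq K$ yield
\[
N_{n}(f,\mathcal{U}) \leq \sum_{k \leq K} N_{n}\bigl(f|_{J_{k}}, \mathcal{U}|_{J_{k}}\bigr) + |\mathcal{U}| + 1.
\]
Taking logarithms, dividing by $n$, and then supremizing over $\mathcal{U}$ yields $h(f) \leq \sup_{k} h(f|_{J_{k}}) = h$.

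The main technical obstacle is precisely this upper bound: infinite decompositions into invariant pieces do not in general satisfy $h(f) = \sup_{k} h(f|_{J_{k}})$, but here the Lebesgue-number argument driven by $\mathrm{diam}(J_{k}) \to 0$ collapses the infinite sum into a finite one. A subsidiary concern is the case $h = +\infty$, in which the slopes $s_{k}$ are unbounded; this does not obstruct computability of $f$ since near $x^{\ast}$ the modulus of continuity is controlled directly by $\mathrm{diam}(J_{k})$ rather than by $s_{k}$.
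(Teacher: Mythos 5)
Your construction is essentially the paper's own: the same partition of $[0,1]$ into invariant intervals $J_k$ of length $2^{-k}$ accumulating at the fixed point $1$, the same constant-slope piecewise-linear building blocks whose entropy is read off from Lemma~\ref{lem.variation}, and the same computability argument exploiting $f(J_k)\subseteq J_k$ near the accumulation point. The only (minor) divergence is in the upper bound $h(f)\le h$, where the paper computes the variation of $f^n$ globally as $\sum_k 2^{-k}s_k^n\le\sup_k s_k^n$ while you use a Lebesgue-number argument on covers; both are correct.
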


\begin{proof}We show how to realize all computable entropies first. 
\begin{enumerate}
\item \textbf{Realization of computable numbers:} 
our goal here is to prove that given $h$ a positive computable real number, we can compute a function $f_h$ of the interval $[0,1]$ with $f_h(0)=0$, $f_h(1)=1$ and such that the topological entropy of 
$f_h$ is $h$. Since for any map $f$ the equality $h(f^n)=nh(f)$ holds, it is sufficient to prove the result for $h \in [0,1]$ only. The case $h=0$ is trivial, so assume $h>0$ and denote $s=2^h$, which is also a computable real number. Now consider the function $f_h$ defined as follows: 
\begin{itemize}
\item when $x \le \frac{1+s}{4s}$, then 
\[f_h (x)=sx,\]
\item when $\frac{1+s}{4s} \le x \le \frac{3s-1}{4s}$, then 
\[f_h (x)= \frac{1+s}{4}-s\left(x-\frac{1+s}{4s}\right),\]
\item and when $x \ge \frac{3s-1}{4s}$, 
\[f_h (x)= s\left(x-\frac{3s-1}{4s}\right)-\frac{3s-1}{4}.\]
\end{itemize}

See an illustration in Figure~\ref{fig.tent.map}.

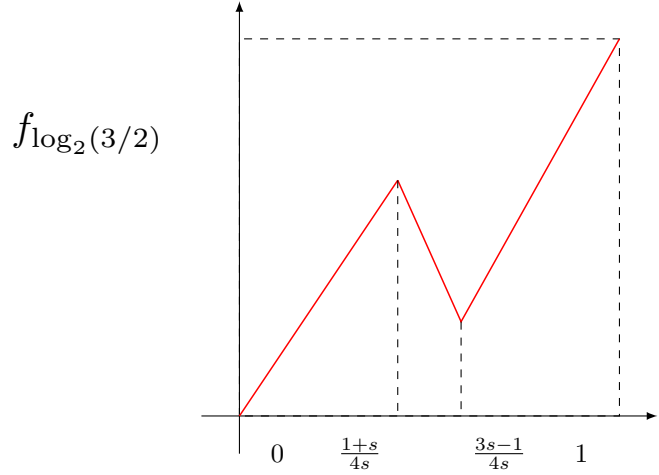
\begin{figure}[ht]
\[\begin{tikzpicture}[scale=0.5]
\draw[-latex] (0,-1) -- (0,11);
\draw[-latex] (-1,0) -- (11,0);
\draw[dashed] (0,0) rectangle (10,10);
\draw[color=red,line width=0.2mm] (0,0) -- (4.166,6.25);
\draw[color=red,line width=0.2mm] (4.166,6.25) -- (5.833,2.5) -- (10,10);
\draw[dashed] (4.166,6.25) -- (4.166,0);
\draw[dashed] (5.833,2.5) -- (5.833,0);
\node at (6.833,-1) {$\frac{3s-1}{4s}$};
\node at (3.166,-1) {$\frac{1+s}{4s}$};
\node at (1,-1) {$0$};
\node at (9,-1) {$1$};
\node[scale=1.5] at (-4,7.5) {$f_{\log_2(3/2)}$};
\end{tikzpicture}\]
\caption{\label{fig.tent.map} Illustration of the map $f_h$ when $s=3/2$.}
\end{figure}

Since on each of the intervals defining $f_h$ the function has slope $s$, we see that, 
for all $n$, the function $f_h^n$ has slope $s^n$. We claim that the variation of $f_h^n$ is $s^n$. Indeed, 
from Lemma~\ref{lem.monotonic.iterate}, there exists a sequence 
$x_1 =0 < ... < x_k = 1$ such that on each of the intervals 
$[x_i,x_{i+1}]$, this function is increasing or decreasing, and 
the slope of the function is $s^n$. Considering a finite sequence 
$y_1 =0 < ... < y_{k'} =1$, 
\[\sum_{j=1}^{k'-1} |f_h^n (y_{j+1})-f_h^n (y_{j+1})| \le 
\sum_{j=1}^{k''-1} |f_h^n ({\tilde{y}}_{j+1})-f_h^n ({\tilde{y}}_{j+1})|,\]
where the sequence $\tilde{y}$ is defined 
to be the increasing sequence such that 
\[\{\tilde{y}_i : i \le k''\} = 
\{x_i : i \le k\} \bigcup \{y_i : 
i \le k'\}.\]
On each of the intervals 
$[{\tilde{y}}_{j+1},{\tilde{y}}_{j+1}]$, 
the slope of the function is $s^n$, hence 
\[|f_h^n ({\tilde{y}}_{j+1})-f_h^n ({\tilde{y}}_{j+1})|= s^n |
{\tilde{y}}_{j+1}- {\tilde{y}}_{j+1}|= s^n ({\tilde{y}}_{j+1}-{\tilde{y}}_{j+1}).\]
This implies that the variation of $f_h^n$ is 
\[V(f_h^n) = \sup s^n \left( \sum_{j} \left( {\tilde{y}}_{j+1}-{\tilde{y}}_{j+1} 
\right) \right) = s^n.\]

From Lemma~\ref{lem.variation}, we deduce that 
the entropy of $f_h$ is 
\[h(f_h) = \lim_n \frac{\log_2 (s^n)}{n} = h.\]

It remains to show the computability of the function $f_h$. But this easily follows from the computability of $s$ and the explicit form of $f$ in terms of $s$.  

\item \textbf{Realization of 
$\Sigma_1$-computable numbers:}
Let now $h$ be a positive $\Sigma_1$-computable real number, 
and $(h_n)$ an increasing 
computable sequence of rational numbers 
such that $h_n \rightarrow h$. 

Let $f_h$ be the map defined by, for all $x, n \ge 0$, 
if $x \in \left[ \sum_{k=1}^{n} \frac{1}{2^k} , 
\sum_{k=1}^{n+1} \frac{1}{2^k}\right]$, 
\[f_h (x) = \sum_{k=1}^{n} \frac{1}{2^k} + 
\frac{1}{2^n} f_{h_n}.\]
See an illustration in Figure~\ref{fig.tent.map2}.

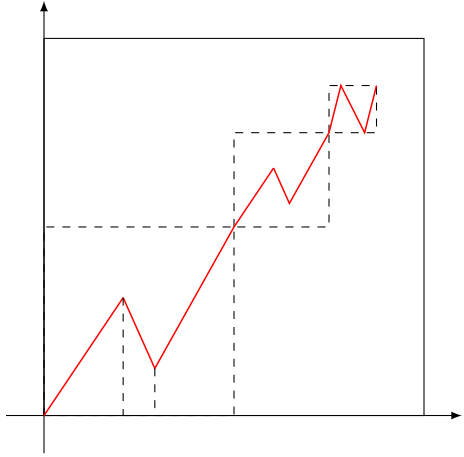
\begin{figure}[ht]
\[\begin{tikzpicture}[scale=0.5]
\draw[-latex] (0,-1) -- (0,11);
\draw[-latex] (-1,0) -- (11,0);
\draw (0,0) rectangle (10,10);
\draw[dashed] (5,5) rectangle (7.5,7.5);
\draw[dashed] (7.5,7.5) rectangle (8.75,8.75);
\draw[dashed] (0,0) rectangle (5,5);
\draw[color=red,line width=0.2mm] (0,0) -- (4.166/2,6.25/2);
\draw[color=red,line width=0.2mm] (4.166/2,6.25/2) -- (5.833/2,2.5/2) -- (5,5);
\draw[color=red,line width=0.2mm] (5,5) -- (5+4.166/4,5+
6.25/4);
\draw[color=red,line width=0.2mm] (5+4.166/4,5+6.25/4) -- (5+5.833/4,5+2.5/4) -- (7.5,7.5);
\draw[color=red,line width=0.2mm] (7.5,7.5) -- (7.5+1.25/4,
8.75) -- (7.5+3.75/4,7.5) -- (8.75,8.75);
\draw[dashed] (4.166/2,6.25/2) -- (4.166/2,0);
\draw[dashed] (5.833/2,2.5/2) -- (5.833/2,0);
\end{tikzpicture}\]
\caption{\label{fig.tent.map2} Illustration 
of an example of map $f_h$, when $h_1 = h_2 = 
\log_2(3/2)$ and $h_3 = 1$.}
\end{figure}

The variation of $f_h^n$ is 
\[V(f_h^n) = \sum_{k=1}^{+\infty} \frac{1}{2^k}
h_k ^n,\]
hence we have the following inequalities, for all 
$p$, since $(h_k)_k$ 
is increasing: 
\[\sum_{k=1}^{p} \frac{1}{2^k} h_k^n 
+ \frac{1}{2^{p-1}} h_{p}^n 
\le V(f_h^n) \le \sup_{k} h_k^n = h^n\]
This implies that the entropy of $f_h$ verifies 
\[h_p \le h(f_h) \le h,\]
and $h(f_h) = h$.

The function $f_h$ is computable. 
Indeed, it is sufficient to see 
that the inequality $f_h (q) < r$
and $f_h (q) > r$ are semi-decidable, 
where $q,r$ are rational numbers. 

An algorithm which allows to 
semi-decide the first one 
(it is similar for the second one) 
works as follows:

\begin{enumerate}
\item it first checks if one of the rational 
numbers is $0$ or $1$. In this case, 
it is trivial to decide the inequality.
\item if none of these rational is $0$ or $1$, 
then the algorithm computes some $n$ 
such that 
\[\sum_{k=1}^n \frac{1}{2^k} \le 
q \le \sum_{k=1}^{n+1} \frac{1}{2^k}.\]
\item then it computes more and more precise 
approximations of the number $h_n$ - 
this is possible since $(h_n)_n$ is uniformly 
computable - checks if the number 
\[2^n (q-\sum_{k=1}^n \frac{1}{2^k})\] 
is in one of the intervals defining 
the function $f_{h_n}$.
If this is the case, the algorithm computes 
the image, and checks if this image is $<r$. 
If this is the case, the algorithm stops.
\end{enumerate}

This algorithm stops when the inequality 
is verified: this comes 
from the fact that $(h_n)_n$ is 
computable.

\end{enumerate}
\end{proof}



We end this section by noting that from the proof of Proposition \ref{prop.realisation.int} we obtain as a by product the following characterization of computable numbers. For a real number $s$, 
we denote $\mathcal{FS}_{s}$ the class of piecewise linear continuous maps  $f : [0,1] \rightarrow [0,1]$ such that the slope of $f$ is $\pm s$ on each of the intervals where it is linear, and let $\mathcal{FS}=\cup_{s} \mathcal{FS}_{s} $. 

\begin{proposition} \label{th.comp}
A real number $h>0$ is computable if and only if it is the entropy of a computable 
map in $\mathcal{FS}$. 
\end{proposition}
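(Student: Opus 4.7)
Proof plan:

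For the forward direction, I invoke the construction in step 1 of the proof of Proposition \ref{prop.realisation.int}: given a positive computable $h$, the map $f_h$ built there is piecewise linear with every slope of magnitude $s := 2^h$, so $f_h \in \mathcal{FS}_s \subset \mathcal{FS}$. Its explicit formula involves only the computable number $s$, making $f_h$ a computable function, and the variation computation presented there yields entropy exactly $h$.

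For the converse, suppose $h = h(f)$ for some computable $f \in \mathcal{FS}_s$ with $s > 1$. The variation computation of step 1 of Proposition \ref{prop.realisation.int} applies verbatim to $f$ and gives $V(f^n) = s^n$ for every $n$, hence $h(f) = \log_2 s$ by Lemma \ref{lem.variation}; it therefore suffices to show that $s$ is computable. By Proposition \ref{prop.obstruction.int}, $h = \log_2 s$ is already $\Sigma_1$-computable, so the task reduces to producing a computable decreasing approximation to $s$.

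To compute $s$, I will exploit that $f$ has only finitely many linear pieces, say $k$ of them ($k$ being fixed but a priori unknown). For each $N \geq 1$, from sufficiently precise approximations of $f$ at the dyadic points $i \cdot 2^{-N}$ I form approximations $\tilde L_i^{(N)}$ of the secant slopes over the intervals $[i \cdot 2^{-N}, (i+1) \cdot 2^{-N}]$. The Lipschitz bound gives $|\tilde L_i^{(N)}| \leq s$ up to the approximation precision for every $i$, while at most $k$ of the $2^N$ subintervals can contain a breakpoint of $f$; on the remaining (at least $2^N - k$) good subintervals $f$ is linear with slope $\pm s$, so $|\tilde L_i^{(N)}|$ equals $s$ up to precision. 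Writing $S_N$ and $M_N$ for the maximum and the median of the $2^N$ numbers $|\tilde L_i^{(N)}|$, both quantities are computable to arbitrary precision from $f$; moreover $S_N \to s$ once $2^N \geq k+1$ (a good subinterval exists) and $M_N \to s$ once $2^N > 2k$ (good subintervals form a majority).

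An algorithm computing $s$ within precision $2^{-n}$ can then iterate $N = 1, 2, \ldots$, computing $S_N$ and $M_N$ to precision $2^{-n-2}$, and halt returning $M_N$ as soon as $|M_N - S_N| \leq 2^{-n-1}$: the Lipschitz bound $S_N \leq s$ (up to precision) combined with the stopping condition forces $|M_N - s| \leq 2^{-n}$, and the limiting behaviour of $S_N$ and $M_N$ guarantees termination. This proves $s$ is computable, and hence $h = \log_2 s$ as well. The main technical obstacle is ensuring, uniformly in $N$, that the internal approximations of $f$ at the $2^N + 1$ grid points are fine enough (precision roughly $2^{-N-n-c}$ for a suitable absolute constant $c$) to propagate to the required precision of the secant slopes; this is routine given the computability of $f$.
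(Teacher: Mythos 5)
Your forward direction is fine and is exactly the paper's: the map $f_h$ from step 1 of Proposition \ref{prop.realisation.int} lies in $\mathcal{FS}_{2^h}$, is computable, and has entropy $h$. The reduction of the converse to ``$s$ is computable'' via $V(f^n)=s^n$ and Lemma \ref{lem.variation} is also correct and matches the paper. The gap is in your algorithm for computing $s$. All of your quantities $\tilde L_i^{(N)}$, and hence both $S_N$ and $M_N$, are \emph{lower} bounds for $s$ (up to precision); your stopping criterion $|M_N-S_N|\le 2^{-n-1}$ only compares two lower bounds with each other and certifies nothing about their distance to $s$. Concretely, take $f\in\mathcal{FS}_{100}$ to be a sawtooth with $100$ teeth of alternating slope $\pm 100$, so that $f(0)=f(1/2)=f(1)=0$: at $N=1$ both secant slopes vanish, $S_1=M_1=0$, your test passes immediately, and the algorithm outputs $0$ while $s=100$. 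The claimed implication ``$S_N\le s$ and $|M_N-S_N|\le 2^{-n-1}$ force $|M_N-s|\le 2^{-n}$'' is simply false; what is missing is a certified \emph{upper} approximation to $s$ (equivalently, a way to know that $2^N$ already exceeds the number of linear pieces), and neither the median trick nor the $\Sigma_1$-computability of $h$ from Proposition \ref{prop.obstruction.int} (which again only gives lower approximations) supplies it.

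The paper avoids this entirely with a non-uniform argument: since $f$ has finitely many breakpoints $0=x_1<\dots<x_k=1$, there \emph{exists} a rational $r$ with $x_1<r<x_2$, and for that $r$ the secant slope over $[0,r]$ equals $\pm s$ exactly, so $s=|f(r)-f(0)|/r$ is a quotient of computable numbers and hence computable (the algorithm may have $r$ hard-coded, since computability of a single real is a non-uniform notion). Your construction can be repaired the same way --- fix, non-uniformly, an $N_0$ with $2^{N_0}$ larger than the number of pieces and output $S_{N_0}$ to the required precision --- but as written, with a purported effective stopping rule, the converse direction does not go through.
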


\begin{proof} If $f$ is 
in $\mathcal{FS}_{s}$ for some real number 
$s$ and is computable,
then $s$ is a computable number.  
Indeed, if $x_1 = 0 < ... x_k = 1$ is 
such that on each $[x_i,x_{i+1}]$, the restriction 
of $f$ is linear, then considering 
a rational number $r$ such that $x_1 < r < x_2$, there is 
an algorithm which on input $n$ outputs some rational numbers $r_n$ and $r'_n$ such that 
$|r_n - f(r)| \le r . 2^{-n-1}$ and $|r'_n - f(0)| \le r . 2^{-n-1}$. Hence 
\[|(r_n - r'_n) - (f(r)-f(0))| \le |r_n - f(r)| + |r'_n - f(0)| \le r. 2^{-n},\]
which means that 
\[\left| \frac{r_n-r'_n}{r} - s \right| \le 2^{-n}.\]
Since $h(f)=\log(s)$, then the entropy of $f$ is computable. 
Reciprocally, since for any function $f$ 
an any integer $n$ the equality $h(f^n)=nh(f)$ holds, it is enough to only prove the result for numbers in $[0,1]$. But this follows directly from the construction presented in the first point of the proof of Proposition~\ref{prop.realisation.int}.
\end{proof}


\subsection{The class of quadratic maps}
\label{quadfam}

The result we will prove in this section is in fact stronger than Theorem D.  Indeed, we will show that the topological entropy $h(f_{r})$ of the logistic map $f_{r}$ is computable \emph{as a function} of $r$. The fact that all computable numbers can be realized within this class will then just follow as a corollary. We start by recalling the material related to the real quadratic family (also known as the logistic family) that will be required for the proof. 

\begin{definition}Let $\mathbb{I}=[0,1]$. The quadratic family is the collection of maps $f_r: \mathbb{I} \to \mathbb{I}$ defined by 
$$f_{r}(x)=rx(1-x); \quad \text{where } r\in[0,4] \,\,\text{ is \emph{the parameter}}.$$ 
\end{definition}

The behaviour of this family as the parameter $r$ varies has been extensively studied, see for instance \cite{Kurka}. Note that the equation $f_r(x)=x$ has two solutions: $p_0=0$ and $p_r=(r-1)/r$. These are the fixed points of $f_r$. The local dynamics near a fixed point is called \textit{stable} when all nearby orbits converge to the fixed point, and \textit{unstable} when nearby orbits escape away from the fixed point. The following is a useful criterion: 

\begin{theorem}[\textbf{Differential criterion of stability}]\label{diffcriterionstability} Let $f_r'(x_0)$ be the derivative of $f_r$ evaluated at a point $x_0$. A fixed point $p$ of $f_r$ is:
\begin{enumerate}[label=(\alph*)]
\item An \textsf{attracting fixed point} if $\vert{f_r'(p)}\vert<1$.
\item An \textsf{unstable fixed point} if $\vert{f_r'(p)}\vert>1$.
\end{enumerate}
\end{theorem}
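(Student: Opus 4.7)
The statement is the classical differential stability criterion and I would prove it with the standard linearization argument. The key observation is that since $f_r$ is smooth, a first-order Taylor expansion around the fixed point $p$ gives
\[
f_r(x) - p = f_r'(p)(x - p) + R(x),
\]
where $R(x) = o(|x-p|)$ as $x \to p$ (in fact $O(|x-p|^2)$ since $f_r$ is polynomial). So the local dynamics is essentially governed by multiplication by $f_r'(p)$, and the whole argument is to make this "essentially" quantitative.

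For part (a), assume $|f_r'(p)| < 1$. The plan is to pick an auxiliary constant $\lambda$ with $|f_r'(p)| < \lambda < 1$. By continuity of $f_r'$ (hence of the remainder term), there exists $\delta > 0$ such that
\[
|f_r(x) - p| \leq \lambda |x - p| \qquad \text{for all } x \in [p-\delta, p+\delta] \cap \mathbb{I}.
\]
In particular, the interval $[p-\delta, p+\delta] \cap \mathbb{I}$ is mapped into itself, so iterates stay in it, and by induction $|f_r^n(x) - p| \leq \lambda^n |x - p| \to 0$. This is exactly the attracting property.

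For part (b), assume $|f_r'(p)| > 1$. Now pick $\mu$ with $1 < \mu < |f_r'(p)|$. By the same Taylor argument there exists $\delta > 0$ such that
\[
|f_r(x) - p| \geq \mu |x - p| \qquad \text{for all } x \in [p-\delta, p+\delta] \cap \mathbb{I}.
\]
The plan is then to argue by contradiction: if $x_0 \neq p$ lies in this neighborhood and the orbit $x_n = f_r^n(x_0)$ stayed inside $[p-\delta, p+\delta]$ for all $n$, iterating the inequality would give $|x_n - p| \geq \mu^n |x_0 - p|$, which tends to infinity since $\mu > 1$ and $x_0 \neq p$, contradicting boundedness. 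Hence the orbit must eventually leave the neighborhood, proving instability.

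The only real subtlety, and the step I would double-check carefully, is the existence of a uniform $\delta$ for which the one-sided Lipschitz estimate holds on the whole neighborhood; this needs the continuity of $f_r'$ and, for case (a), the fact that we can simultaneously ensure the neighborhood is forward-invariant. Both are automatic for the quadratic family $f_r(x) = rx(1-x)$ since it is $C^\infty$, so no genuine obstacle arises.
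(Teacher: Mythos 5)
Your linearization argument is correct and is the standard proof of this classical criterion; the paper itself states Theorem \ref{diffcriterionstability} as a known textbook fact and gives no proof, so there is nothing to diverge from. The only cosmetic remark is that in part (a) the mean value theorem yields the estimate $\vert f_r(x)-p\vert\le\lambda\vert x-p\vert$ directly (choosing $\delta$ so that $\vert f_r'\vert\le\lambda$ on the neighbourhood), which slightly streamlines the handling of the remainder term, and your part (b) establishes exactly the notion of instability (nearby orbits escape the neighbourhood) that the paper uses informally.
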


As the parameter $r$ varies the dynamical behaviour of the trajectories of $f_r$ changes. We summarize part of its behaviour as follows:
\begin{itemize}
\item For $0<r<1$, every trajectory converges to $p_0$, which is the only fixed point and, moreover, is an attracting fixed point.
\item For $1\leq r<3$, $p_0$ looses its stability and another fixed point $p_r$ appears, which behaves as an attracting fixed point.
\item For $3<r<\sqrt6+1$, $p_r$ looses stability and an attracting cycle of period $2$ appears around $p_r$ which attracts all points in $\mathbb{I}=[0,1]$.
\item For  $r$ beyond $\sqrt6+1$, each point in the attracting cycle of period $2$ looses its stability and is splitted into two points, which together, form another attracting cycle of period $4$. This process continues as $r$ increases. Let $r_i$ be the values of $r$ for which an attracting cycle of period $2^i$ appears. We denote as $r_\infty=\lim_ir_i$ the parameter corresponding to the limit of the bifurcation process just described.
\item At $r=r_\infty$ the bifurcations stop and the attracting orbit becomes infinite in a subset of $\mathbb{I}$.
\item For $r_\infty <r\leq4$ the system's behavior seems to alternate between order and disorder, until  $r=4$, where it becomes fully chaotic according to  Devaney's definition (see \cite{devaney}).
\end{itemize}

In Figure \ref{bif} we present a plot of the the bifurcation diagram where the previous observations are represented in a more comprehensive way.
\begin{figure}[H]
\centering
\includegraphics[width=\textwidth, height=10cm]{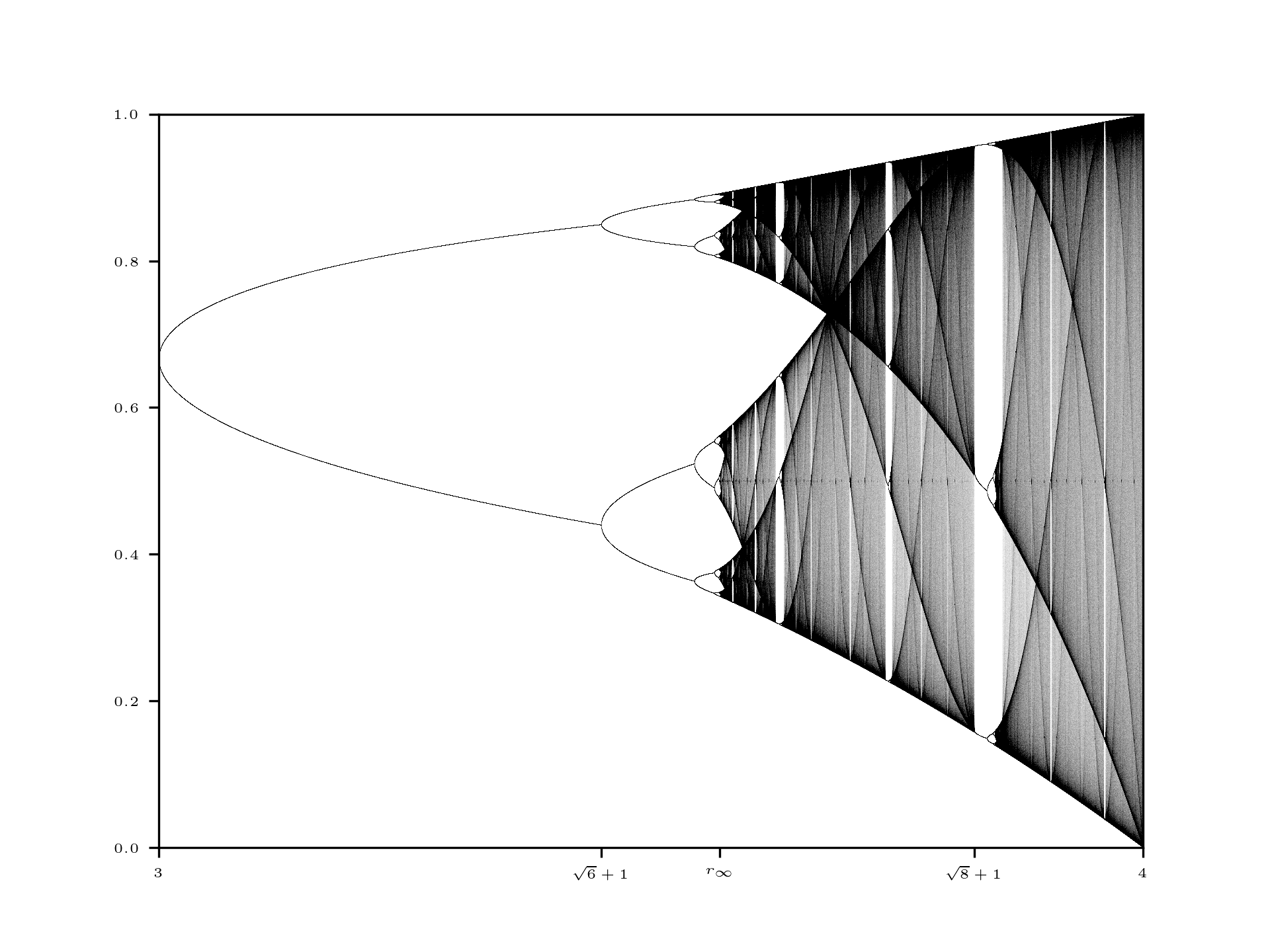}
\caption{Bifurcation Diagram: it plots the  \textit{attractor} of the map $f_r$ as a function of the parameter $r\in[3,4]$.}\label{bif}
\end{figure}

Note the big ``window" at $\sqrt8+1$. This is one of the regions known as \textit{hyperbolic components}, where ordered behavior seems to prevail. The set of  hyperbolic parameters  are,  in fact, dense in the space of parameters (see \cite{swiatek1992hyperbolicity, GraczSwiat}). At the beginning of this big window, it may seem that a periodic cycle of period 3 attracts \textit{all} the possible trajectories. However, the diagram here is misleading, since it is easy to show that if this was the case then the topological entropy would be zero, contradicting the following theorem due to Douady.

From now on, we will write $h(r)$ for the topological entropy of the system $(\mathbb{I},f_r)$. 

\begin{theorem}[Theorem 5.3 in \cite{douady}]\label{douady} The following properties hold for the quadratic family: 
\begin{enumerate}[label=(\alph*)]
\item The function $h:[0,4]\to[0,\ln2]$ is continuous and weakly increasing.
\item $h(r)>0$ if and only if $r>r_\infty$.
\item $h(r)=h(r')$ if and only if $r$ and $r'$ are tuned by the same $r_0$, with $h(r_0)>0$
\end{enumerate}
\end{theorem}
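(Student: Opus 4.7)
The plan is to combine three well-developed tools: kneading theory of Milnor and Thurston for interval maps, Sharkovsky's theorem on the forcing of periodic orbits, and the renormalization theory for unimodal maps developed by Douady--Hubbard, Sullivan, and Lyubich. Each of the three items (a), (b), (c) will be handled by one of these in turn, with the deepest input coming from renormalization.

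For part (a), I would use the classical identity $h(f_r) = \lim_n \frac{1}{n} \log_2 s_n(f_r)$ of Misiurewicz--Szlenk, where $s_n$ denotes the lap number (number of maximal monotonicity intervals) of $f_r^n$. Continuity of $r \mapsto h(r)$ then follows from Misiurewicz's general theorem that topological entropy is $C^0$-continuous on the space of piecewise monotone interval maps, together with the obvious fact that $r \mapsto f_r$ is $C^0$-continuous. Weak monotonicity is obtained by showing, through the kneading sequence $K(f_r)$ of the critical point $c = 1/2$, that the lap numbers $s_n(f_r)$ are monotone in $r$ with respect to the twisted lexicographic order, and then passing to the limit.

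For part (b), if $r \le r_\infty$ the period doubling cascade implies that every periodic orbit of $f_r$ has period a power of $2$; Misiurewicz's theorem that interval maps having only $2^k$-periodic orbits have zero entropy then gives $h(r) = 0$. Conversely, for $r > r_\infty$, new periods appear in the Sharkovsky order, so some iterate of $f_r$ supports a periodic orbit whose period is not a power of $2$. By Misiurewicz's converse, combined with the horseshoe characterization in Theorem~\ref{th.misiurewicz1}, this forces $h(r) > 0$.

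Part (c) is the hardest. The ``if'' direction is a consequence of the tuning (renormalization) construction: a parameter of the form $r = r_0 * r'$ produces a renormalizable map whose recurrent non-periodic dynamics is supported on a small invariant Cantor-like set on which some iterate $f_r^{p}$ is conjugate to $f_{r'}$ (with $p$ the period associated to $r_0$), so that the entropy is entirely determined by $f_{r'}$, the outer periodic combinatorics contributing zero. The main obstacle is the ``only if'' direction, which requires showing that level sets of $h$ coincide \emph{exactly} with maximal renormalization windows. This amounts to the monotonicity of the kneading invariant along the quadratic family, which in turn rests on deep complex-dynamic rigidity results (quadratic-like maps, the connectedness of the Mandelbrot set, and the absence of invariant line fields on Julia sets). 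Without these ingredients one could only conclude weak monotonicity of $h$, not the rigidity of its level sets, and ``phantom'' plateaux outside tuning windows would not be excluded.
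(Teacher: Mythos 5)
The paper does not prove this statement at all: it is imported as Theorem 5.3 of \cite{douady} and used as a black box, so there is no internal argument to compare yours against. Judged on its own terms, your outline is a reasonable map of where the proofs live in the literature --- Misiurewicz--Szlenk lap numbers and kneading theory for (a), the Sharkovsky/Misiurewicz characterization of zero entropy via power-of-two periods for (b), and tuning plus complex-analytic rigidity for (c) --- and you correctly identify that the genuinely deep input is the monotonicity of the kneading invariant, which ultimately rests on quadratic-like maps and the absence of invariant line fields.

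Two caveats are worth flagging. First, in part (b) the step ``for $r>r_\infty$, new periods appear in the Sharkovsky order'' is not free: a priori a parameter beyond $r_\infty$ could still carry only periods that are powers of two, and excluding such ``phantom'' behaviour is precisely the monotonicity/rigidity statement you defer to part (c). Logically the complex-analytic monotonicity should come first, with (a), (b) and (c) derived from it, rather than (b) being handled by Sharkovsky forcing alone. Second, continuity of topological entropy in the $C^0$ topology is delicate for general interval maps (entropy is only lower semicontinuous there); for the quadratic family one needs the unimodal structure and the lap-number formula specifically, not a generic continuity theorem. Neither point is fatal --- you are sketching known theorems, not claiming new proofs --- but as written the proposal is a survey of references rather than a self-contained argument, which is exactly the status the statement has in the paper itself.
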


The main theorem of this section is:

\begin{theorem}\label{entropyquadraticfamily} $h:[0,4]\to[0,\ln2]$ is  a computable function. 
\end{theorem}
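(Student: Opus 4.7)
The plan is to prove computability of $h$ by combining lower-semicomputability, obtained uniformly from Proposition~\ref{prop.obstruction.int}, with upper-semicomputability for rational parameters, obtained from a classical lap-number formula, and then extending to all computable parameters using the monotonicity and continuity granted by Theorem~\ref{douady}. The first ingredient comes almost for free: since $f_r(x) = rx(1-x)$ is uniformly computable in $r$, the proof of Proposition~\ref{prop.obstruction.int} (the enumeration of rational horseshoes of $f_r$) relativizes in the parameter. One thus obtains an algorithm that, given any name of a computable $r\in[0,4]$, outputs an increasing sequence of rationals $q_m(r)\nearrow h(f_r)$.

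For the upper bounds I would invoke the Misiurewicz--Szlenk identity
\[
h(f) \;=\; \inf_{n \geq 1} \frac{\log l(f^n)}{n},
\]
valid for every piecewise monotone interval map $f$, where $l(f^n)$ is the number of maximal intervals of monotonicity of $f^n$. This identity follows from the submultiplicativity $l(f^{m+n})\le l(f^m)\,l(f^n)$ and Fekete's lemma, so each $\log l(f_r^n)/n$ is a genuine upper bound on $h(f_r)$. For \emph{rational} $r$, I claim that $l(f_r^n)$ is effectively computable: its value equals $1$ plus the number of solutions in $(0,1)$ to the equations $f_r^k(x)=1/2$ for $0\le k<n$, counted without multiplicity across $k$. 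Since these equations have rational coefficients, both the root counts in $(0,1)$ and the coincidences between roots of different equations are handled by standard symbolic methods (e.g.\ Sturm sequences and resultants). Combining this with the lower approximations above produces a fully computable value of $h(r)$, uniformly for rational $r$.

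The final step extends computability to arbitrary computable $r\in[0,4]$ at required precision $\epsilon>0$. From the approximations of $r$ one effectively produces a rational $r'>r$ arbitrarily close to $r$; evaluating $h(r')$ to precision $\epsilon/4$ yields a rational $\tilde h$ with $h(r)\le h(r')<\tilde h+\epsilon/4$ by monotonicity. Paired with the lower approximations $q_m(r)$ and continuity (which guarantees that, as $r'\downarrow r$ and $m\to\infty$, the gap $\tilde h+\epsilon/4-q_m(r)$ eventually drops below $\epsilon$), a dovetailed search terminates and outputs an $\epsilon$-approximation of $h(r)$. The principal technical obstacle sits in the rational-parameter case: although routine in spirit, effectively counting the distinct critical points of $f_r^n$ requires careful bookkeeping of coincidences among preimages of the critical value under different iterates of $f_r$; once this is in place, the monotonicity/continuity extension is essentially automatic.
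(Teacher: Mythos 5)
Your proof is correct in outline, but it follows a genuinely different route from the paper's. The paper works from the \emph{parameter} side: it enumerates the superattracting parameters (roots of $f_r^{p}(c)-c$, via Theorem \ref{keriko}), computes the entropy there exactly through the Markov partition induced by the finite critical orbit (Propositions \ref{seqr} and \ref{hunifcomp}), spreads these values to a dense set of rationals using the local constancy of $h$ on hyperbolic components together with the density of hyperbolic parameters (Proposition \ref{sequence}), and then squeezes $h(r)$ from both sides using Douady's monotonicity and continuity. You instead work from the \emph{phase-space} side: lower bounds come uniformly in $r$ by relativizing the rational-horseshoe enumeration of Proposition \ref{prop.obstruction.int} (legitimate, since $f_r$ is computable uniformly in a name of $r$ and Theorem \ref{th.misiurewicz1} guarantees convergence), while upper bounds come from the Misiurewicz--Szlenk lap-number formula evaluated symbolically at rational parameters, transferred to $r$ by monotonicity and continuity from Theorem \ref{douady}(a). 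Your approach buys independence from the density of hyperbolic components (a deep theorem the paper must import) and from the Markov-subshift machinery, reusing instead tools already set up in Section \ref{sec.characterization.interval}; the paper's approach buys exact entropy values on a dense set of parameters and a cleaner conceptual link to the combinatorics of the critical orbit. Two small points deserve attention in your write-up: (i) your formula $l(f_r^n)=1+\#\{x\in(0,1):\exists k<n,\ f_r^k(x)=\tfrac12\}$ does hold, but only after checking that every such $x$ is genuinely a turning point of $f_r^n$ (when the critical point is periodic a single $x$ solves the equation for several $k$; one must verify the total vanishing order of $(f_r^n)'$ there is odd, which follows since the minimal such $k$ contributes a simple zero and each subsequent one doubles the previous order); alternatively, just count sign changes of the rational polynomial $(f_r^n)'$ on $(0,1)$ directly, which sidesteps the bookkeeping entirely. (ii) The endpoint $r=4$ admits no rational $r'>r$ in $[0,4]$; this is handled by always keeping $\log 2$ as an admissible upper bound, since $h(4)=\log 2$ and the horseshoe lower bounds then close the gap.
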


Before presenting its proof, we note that Theorem D follows from Theorem \ref{entropyquadraticfamily}. via the following corollary. 

\begin{corollary}[Theorem D] A real number in $[0,\ln2]$ is computable if and only if it is the topological entropy of a computable logistic map. 
\end{corollary}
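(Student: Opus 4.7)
My plan is to prove the two implications separately, with the bulk of the work on the ``only if'' direction.

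The ``if'' direction is immediate. If $h$ is the topological entropy of a computable logistic map $f_r$, then by definition $r$ is a computable real number. Theorem~\ref{entropyquadraticfamily} asserts that the entropy function $r \mapsto h(r)$ is computable, so applying it to the computable input $r$ yields the computable real $h = h(r)$.

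For the ``only if'' direction, let $h^* \in [0,\ln 2]$ be computable. The goal is to produce a computable $r^* \in [0,4]$ satisfying $h(r^*) = h^*$. By Theorem~\ref{douady}, $h$ is continuous, weakly increasing, and surjective onto $[0,\ln 2]$, so the preimage $h^{-1}(h^*)$ is a nonempty closed subinterval $[a,b] \subset [0,4]$. I would split the analysis into two cases. If $a < b$, then $h^*$ is a plateau value and any rational in the open interval $(a,b)$ is a trivially computable preimage. If $a = b$, the preimage is the singleton $\{a\}$, and for every rational $q \neq a$ we have $h(q) \neq h^*$; in this case a standard bisection on $[0,4]$ that compares computable approximations of $h$ at the midpoint with computable approximations of $h^*$ will eventually decide the sign of $h(m) - h^*$ at each midpoint $m$, producing a Cauchy sequence converging to $a$.

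The main obstacle is combining these two case analyses into a single uniform algorithm, since a priori we do not know which case $h^*$ falls into. My approach would be to dovetail the bisection with an enumeration of rationals $q \in \Q \cap [0,4]$, each tested to ever-increasing precision for membership in $h^{-1}(h^*)$: in the plateau case a rational witness is eventually found, while in the singleton case the bisection converges on its own. Concretely, one can maintain at each stage $n$ an enclosure $[u_n, v_n] \supseteq h^{-1}(h^*)$ obtained by lower-approximating $a$ through rationals semi-verified to satisfy $h(q) < h^*$ and upper-approximating $b$ through rationals semi-verified to satisfy $h(q) > h^*$, and output whichever witness---a bisection midpoint or a rational in the shrinking enclosure---first certifies the required precision. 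The technical crux is verifying that these outputs assemble into a single computable Cauchy sequence whose limit is a valid preimage in $h^{-1}(h^*)$.
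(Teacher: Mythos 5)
Your proof is correct and follows the same overall skeleton as the paper's: the ``if'' direction is immediate from Theorem~\ref{entropyquadraticfamily}, and the ``only if'' direction splits on whether $h^{-1}(h^*)$ is a nondegenerate interval (take a rational in its interior) or a singleton. Where you diverge is in the singleton case: you exploit the monotonicity of $h$ from Theorem~\ref{douady} to run a bisection, deciding at each rational midpoint $m$ whether $h(m)<h^*$ or $h(m)>h^*$ (decidable since $h(m)\neq h^*$ there), which locates $a$ at a known rate. The paper instead observes that the singleton $\{r\}=h^{-1}(\{h^*\})$ is an effective closed set because $h$ is computable and $h^*$ is computable, and then invokes Proposition~\ref{prop.equivalence.comp.closed.subset} to semi-decide inclusions $\{r\}\subset(a,b)$, from which computability of $r$ follows. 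Your bisection is more elementary and self-contained but uses monotonicity essentially; the paper's argument is more abstract and would survive without monotonicity (it only needs the preimage to be a single point). Both are fine.

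One caution about your last paragraph: the uniformity you worry about is not required by the statement, which only asserts that for each fixed computable $h^*$ \emph{there exists} a computable preimage; a non-effective case distinction is perfectly legitimate here, and the paper makes exactly that non-uniform split. Moreover, the dovetailing you sketch does not quite work as stated: membership of a rational $q$ in $h^{-1}(h^*)$, i.e.\ the exact equality $h(q)=h^*$, is only refutable (semi-decidable in the negative), never certifiable from finite-precision approximations, so ``a rational witness is eventually found'' in the plateau case cannot be verified by your procedure. Since the uniform version is not needed, this does not damage the proof --- but you should either drop that paragraph or present the case split frankly as non-constructive.
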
 
\begin{proof}
By Theorem \ref{entropyquadraticfamily} it is clear that computable maps have computable topological entropy.  Let's show the converse. Note that since $h$ is nondecreasing and onto, given a computable $y\in[0,\ln2]$, the set $h^{-1}(\{y\})$ is either a singleton or an interval. The latter case is trivial since any interval must contain computable points. In case it is a singleton $\{r\}$, since $h$ is computable by Theorem \ref{entropyquadraticfamily}, $\{r\}$ must be an effective set. By Proposition \ref{prop.equivalence.comp.closed.subset}, we can then semi-decide whether $\{r\}\subset (a,b)$ uniformly in $a$ and $b$, from which computability of $r$ follows. 
\end{proof}

The proof of Theorem \ref{entropyquadraticfamily} will follow from Propositions \ref{seqr}, \ref{hunifcomp} and \ref{sequence} below, but the main strategy is based on three facts
\begin{itemize}
\item In every hyperbolic component there is an attracting cycle which contains the critical point $c=1/2$.
\item The topological entropy is locally constant inside every hyperbolic component
\item The hyperbolic components are dense in the space of parameters (see \cite{GraczSwiat}).
\end{itemize}
We use the following known result as well.

\begin{theorem}[Theorem 4.11 \cite{keriko}]\label{keriko}All roots of an analytical computable function on $[0,1]$ are computable.
\end{theorem}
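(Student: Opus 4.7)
The plan is to reduce computing an individual root of $f$ to a controlled bisection applied to a suitable derivative where that root becomes simple. If $f \equiv 0$ on $[0,1]$, then every point is a root and is trivially computable, so assume $f \not\equiv 0$. By real-analyticity on the compact interval $[0,1]$, the zero set $Z(f)$ is then finite, and each root has a finite multiplicity.

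Fix a root $x_{0}$ with multiplicity $m \geq 1$, meaning $f(x_{0}) = f'(x_{0}) = \dots = f^{(m-1)}(x_{0}) = 0$ while $f^{(m)}(x_{0}) \neq 0$. Set $g := f^{(m-1)}$; then $g(x_{0}) = 0$ and $g'(x_{0}) \neq 0$, so $g$ has a simple zero at $x_{0}$ and strictly changes sign there. Since $f$ is analytic and computable, all derivatives $f^{(k)}$ are uniformly computable by standard results on computable analytic functions; in particular $g$ is computable and, being analytic and not identically zero, has a finite zero set in $[0,1]$. One may therefore choose rationals $a < x_{0} < b$ such that $x_{0}$ is the unique zero of $g$ in $[a,b]$ and $g(a), g(b)$ have opposite signs. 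The constants $a$, $b$, and $m$ are hard-wired into the algorithm computing $x_{0}$: the theorem only asks that each root be a computable real individually, not that the procedure be uniform in $x_{0}$. If $x_{0}$ happens to be rational, the algorithm simply outputs its rational code, so from now on assume $x_{0}$ is irrational.

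With $a, b, m$ fixed, the plan is to run the following refinement. Starting from $[a_{0}, b_{0}] := [a,b]$, at step $k$ we select a rational $c_{k}$ within distance $(b_{k}-a_{k})/4$ of the midpoint of $[a_{k}, b_{k}]$; since $x_{0}$ is irrational, $c_{k} \neq x_{0}$, so $g(c_{k}) \neq 0$ (as $x_{0}$ is the only zero of $g$ in $[a,b]$). The sign of $g(c_{k})$ is then decidable by running in parallel two semi-decision procedures for $g(c_{k}) > 0$ and $g(c_{k}) < 0$, exactly one of which must halt. We set $[a_{k+1}, b_{k+1}]$ to whichever of $[a_{k}, c_{k}]$ or $[c_{k}, b_{k}]$ has endpoint $g$-values of opposite sign. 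The lengths $b_{k} - a_{k}$ then contract by a factor of at most $3/4$ at each step, so the nested intervals $[a_{k}, b_{k}]$ shrink geometrically to $\{x_{0}\}$, exhibiting $x_{0}$ as a computable real.

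The main obstacle is that testing $g(c) = 0$ at a rational $c$ is not algorithmically decidable in general; analyticity rescues the argument by forcing the zero set of $g$ to be so sparse (finite in $[a,b]$, reduced to the single point $x_{0}$) that rational test points $c_{k}$ with $g(c_{k}) \neq 0$ can always be found near each midpoint, and for such $c_{k}$ the sign of $g(c_{k})$ is effectively computable.
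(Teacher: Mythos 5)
The paper offers no proof of this statement at all: it is imported as Theorem 4.11 of \cite{keriko} and used as a black box, so there is no internal argument to compare yours against; it must be judged on its own merits. On those merits it is essentially correct, and it is the natural root-isolation argument: since the theorem is non-uniform (a separate algorithm is permitted for each root), you may hard-wire the multiplicity $m$, the isolating rational interval $[a,b]$ and the endpoint signs; replacing $f$ by $g=f^{(m-1)}$ turns $x_{0}$ into a simple, hence sign-changing, zero, which is exactly what makes interval subdivision work at even-multiplicity roots where $f$ itself does not change sign; and the non-constructive case split on the rationality of $x_{0}$ legitimately sidesteps the impossibility of testing $g(c_{k})=0$. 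The invariants you maintain (sign change at the endpoints, uniqueness of the zero in $[a,b]$, contraction by the factor $3/4$) do exhibit $x_{0}$ as a computable real.

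Two repairs are needed, one of which concerns a genuinely false sentence. First, your disposal of the case $f\equiv 0$ is wrong: every point of $[0,1]$ is then a root, and $[0,1]$ contains non-computable reals (there are only countably many computable ones), so in this degenerate case the theorem is literally \emph{false}, not trivially true. The statement must be read with the implicit hypothesis $f\not\equiv 0$ (equivalently, as a claim about isolated roots), and your sentence should say exactly that rather than assert that every point is computable. Second, the appeal to \emph{uniform} computability of all derivatives of a computable analytic function is heavier than what you use and requires care: uniformity in $k$ holds for a fixed $f$ only after hard-wiring a radius of analyticity and a bound, and it fails uniformly in $f$. All your proof needs is that the single, fixed derivative $f^{(m-1)}$ is computable, which already follows, by induction with a hard-wired integer bound on the next derivative, from the elementary fact that a computable $C^{2}$ function has a computable derivative --- no analytic machinery is required, and this is perfectly in keeping with the non-uniform character of the rest of your argument. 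With these two adjustments the proof is complete.
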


Recall that a subshift of finite type with pruning of words of length two is known as a Markov subshift and the computability of its topological entropy, given its alphabet $A$ and the set of forbidden words $\mathcal{F}$, is a known result that can be found, for instance, in Theorem 6.1 of \cite{spandl2007computing}.

With all these ingredients we start with the following result, which provides a list of parameters where the attracting cycle contains the critical point $c$.

\begin{proposition}\label{seqr}
There exists an algorithm that enumerates a sequence $(r_i,p_i)_{i}\subseteq\mathbb{R}\times\mathbb{N}$  such that the critical point $c$, under iterations by $f_{r_i}$, has a attracting periodic orbit of period $p$. 
\end{proposition}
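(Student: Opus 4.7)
The plan is to exploit the fact that $c = 1/2$ is the unique critical point of $f_r$, so that $f_r'(c) = r(1-2c) = 0$. Consequently, if $c$ lies on a periodic orbit of $f_r$, then by the chain rule the derivative of $f_r^p$ at $c$ equals $\prod_{i=0}^{p-1} f_r'(f_r^i(c)) = 0$, making the cycle super-attracting (and hence attracting by Theorem~\ref{diffcriterionstability}). It will therefore suffice to algorithmically enumerate pairs $(r,p)$ for which $c$ is periodic of period $p$ under $f_r$.

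The key observation is that the condition $f_r^p(1/2) = 1/2$ is a polynomial equation in $r$ with rational coefficients. Indeed, since $f_r(x) = rx(1-x)$ is polynomial in both variables, the iterate $f_r^p(x)$ is too, and evaluating at $x = 1/2$ yields a polynomial $g_p(r) := f_r^p(1/2) - 1/2 \in \Q[r]$ whose construction is uniform in $p$. By Theorem~\ref{keriko}, all real roots of $g_p$ are computable numbers. Moreover, for polynomials with rational coefficients in a single variable, the roots can be \emph{effectively isolated and enumerated} uniformly in $p$, for instance via Sturm sequences to locate each root inside a rational interval, followed by refinement of the isolating intervals to obtain arbitrarily precise rational approximations.

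The algorithm will therefore proceed as follows: for each $p = 1, 2, 3, \ldots$, symbolically build the polynomial $g_p(r)$, compute isolating intervals for its real roots in $[0,4]$, and for each such root $r$ output the pair $(r,p)$. By construction, the orbit of $c$ under $f_r$ has period $p$ and, as explained above, is automatically super-attracting, hence attracting.

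The main (rather mild) subtlety I anticipate is that $p$ need not be the minimal period of the orbit of $c$: a root of $g_{p'}$ with $p' \mid p$ is also a root of $g_p$, so the same parameter will reappear in the enumeration with non-minimal periods. This does not invalidate the statement, since the orbit genuinely has period $p$ in the sense $f_r^p(c) = c$, and it is attracting. Restricting to minimal periods would require additional post-processing and is obstructed by the fact that equality of computable reals is not decidable in general; however, this is not needed for the result as stated.
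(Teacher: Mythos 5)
Your proposal is correct and follows essentially the same route as the paper: both form the polynomial $f_r^p(c)-c$ in $r$, invoke the computability of roots of polynomials (Theorem~\ref{keriko}), and enumerate roots in the parameter range in parallel over $p$. In fact you are slightly more careful than the paper's own proof, which does not explicitly justify why the cycle through $c$ is attracting (your super-attracting observation via $f_r'(c)=0$) nor address the non-minimality of the period $p$.
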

\begin{proof}
First note that the equation $f_{r}(c)$, iterated $p-1$ times, is a polynomial in $r$ of degree $2^p-1$, we define the polynomial $P_p(r)=f_{r}^{p}(c)-c$ of degree $2^p-1$. Recall from Theorem \ref{keriko} that all the roots of analytic functions, such as polynomials, are computable. Let us denote by $\mathcal{K}$ the algorithm that on input $P_p$ outputs all the roots of $P_p$ -- we consider a single algorithm $\mathcal{K}$ since the roots are uniformly computable for these parameters.\\
We describe an algorithm $\mathcal{A}$ which enumerates the required sequence. $\mathcal{A}$ starts a parallel simulation of $\mathcal{K}$ with inputs $P_p$ for $p=1,2,3,\ldots$. Every time one of the parallel simulations of $\mathcal{K}$ halts, for some $p$, $\mathcal{A}$ checks whether $r\in(0,4)$, whenever this is satisfied $\mathcal{A}$ label $p$ with $i$ and outputs the pair $(r_i,p_i)$, where $r_i$ is a root of $P_{p_i}$. This procedure provides the desired sequence.
\end{proof}

\textbf{Observation:} in the proof of Proposition \ref{seqr}, $r_i$ is a computable real number ``provided" by algorithm $\mathcal{K}$ -- what $\mathcal{K}$ really outputs is, for each $r_i$, a sequence $$\vert{q_j- r_i}\vert\leq 2^{-j},\quad q_j\in\mathbb{Q}.$$
If the critical point $c$ is part of a finite attracting cycle, a Markov partition can be induced over the unit interval $\mathbb{I}$ and the corresponding Markov subshift can be defined. The algorithmic procedure for the construction of the required subshift is described in the proof  Proposition  \ref{hunifcomp} below, which asserts that the topological entropies of the systems corresponding to the parameters obtained by the means of Proposition \ref{seqr}, are uniformly computable.

\begin{proposition}\label{hunifcomp}
The sequence $\left(h(f_{r_i})\right)_i$ is uniformly computable.
\end{proposition}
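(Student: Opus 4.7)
The key observation is that when the critical point $c=1/2$ lies on a periodic orbit of $f_{r_i}$ of period $p_i$, the orbit of $c$ together with $\{0,1\}$ yields a Markov partition of $\mathbb{I}$ on which $f_{r_i}$ acts as a piecewise monotone Markov map. By a classical theorem on such maps (see \cite{Ruette15}), $h(f_{r_i})$ coincides with the topological entropy of the one-step subshift of finite type $\Sigma_{M_i}$ determined by the induced transition matrix $M_i$. Since Theorem 6.1 of \cite{spandl2007computing} gives uniform computability of the topological entropy of Markov subshifts from a finite presentation, the whole plan reduces to producing $M_i$ uniformly in $i$.

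First I would compute the orbit $\mathcal{O}_i=\{c,f_{r_i}(c),\ldots,f_{r_i}^{p_i-1}(c)\}$, which is possible uniformly in $i$ since the sequence $(r_i,p_i)$ is enumerated with uniform rational names by Proposition~\ref{seqr}. Because $\mathcal{O}_i$ has exact period $p_i$, its $p_i$ points are pairwise distinct, so the finite list $\{0,1\}\cup\mathcal{O}_i$ consists of distinct computable reals and can be sorted by the standard algorithm that refines rational approximations until all pairwise comparisons between distinct computable reals are resolved. Writing the sorted list as $0=a_0<a_1<\cdots<a_{k_i}=1$ and setting $J_j=[a_{j-1},a_j]$, the critical point $c$ is one of the endpoints, so $f_{r_i}|_{J_j}$ is strictly monotone; since $f_{r_i}(\mathcal{O}_i\cup\{0,1\})\subseteq\mathcal{O}_i\cup\{0\}$, each image $f_{r_i}(J_j)$ is a union of consecutive $J_k$, confirming the Markov property.

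The transition matrix is then obtained by setting $M_i[j,k]=1$ iff $J_k\subseteq f_{r_i}(J_j)$, a condition that, once the sorted ordering of the $a_\ell$ and of their images is known, reduces to a finite combinatorial check. The main technical hurdle is the sorting/comparison step; this is precisely where distinctness of the orbit points is essential, ensuring that every pairwise comparison terminates in finite time (a priori, equality of two computable reals is undecidable, so without the periodicity assumption this step would fail). Everything after that is manipulation of a finite object, and the uniform computability of $(h(f_{r_i}))_i$ follows by applying Theorem 6.1 of \cite{spandl2007computing} uniformly in $i$ to the presentation $M_i$.
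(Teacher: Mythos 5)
Your proof follows essentially the same route as the paper: build the Markov partition of $[0,1]$ from the critical orbit together with $\{0,1\}$, extract the transition structure of the induced Markov subshift, and invoke Theorem 6.1 of \cite{spandl2007computing} uniformly in $i$. You are in fact slightly more careful than the paper in making explicit both the identification $h(f_{r_i})=h(\Sigma_{M_i})$ and the termination of the sorting step for the (distinct) computable orbit points, but these are refinements of the same argument, not a different one.
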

\begin{proof}
Given the sequence $(r_i,p_i)_{i}$ from Proposition \ref{seqr}, we describe an algorithm $\mathcal{A}_{sh}$ which uniformly computes the sequence $\left(h(f_{r_i})\right)_i$.

The algorithm $\mathcal{A}_{sh}$ starts, for each input $(r_i,p_i)$, by computing the set  $\mathcal{O}=\cbra{f_{r_i}^{n}(c):0<n\leq p_i}$ which is the orbit of $c$. Then $\mathcal{A}_{sh}$ orders the elements in $\mathcal{O}$ with respect to the relation ``$<$, \textit{less than}" in an ordered set $\mathcal{P}$, and adds the numbers $0$ and $1$ as the smallest and biggest elements of $\mathcal{P}$. Then, the elements of $\mathcal{P}$:
$$0=x_0<x_1<\ldots<x_k=c<\ldots<x_{p_i}<x_{p_i+1}=1.$$
form a partition of $\mathbb{I}$, with atoms
$$A_{\mathcal{P}}=\cbra{[x_j,x_{j+1}): 0\leq j\leq p_i}.$$
Later, $\mathcal{A}_{sh}$ creates the set $A=\cbra{a_j:0\leq j\leq p_i}$ that labels the atoms in $\mathcal{P}$, and computes the set $A^2=\cbra{a_ia_j: a_i, a_j\in A}$, by concatenating the letters of $A$ in all possible combinations. Next $\mathcal{A}_{sh}$ computes the set $\mathcal{F}$ as follows: for the first atom $a_0$, $\mathcal{A}_{sh}$ computes $f_{r_i}(x_1)=x_m$, which is true for some $0\leq m\leq p_i$. Then $f_{r_i}(a_0)=\cup_{n=0}^m\cbra{a_n}$. With this computation done, $\mathcal{A}_{sh}$ creates the collection of words $\mathcal{F}_0=\cbra{a_1a_n\in A^2:n>m}$. 
Suppose that $x_k=c$. For $a_j$, $0<j< k$, $\mathcal{A}_{sh}$ computes $f_{r_i}(x_j)=x_s$ and $f_{r_i}(x_{j+1})=x_t$, which are true for some $0\leq s,t\leq p_i$. Therefore $f_{r_i}(a_j)=\cup_{n=s}^t\cbra{a_n}$. This holds since $\mathcal{P}$ is a Markov partition and the image, under $f_{r_i}$, of every atom in the partition is a union of atoms in the partition. Once the previous computations are completed, $\mathcal{A}_{sh}$ computes the collection of words $\mathcal{F}_j=\cbra{a_ja_n\in A^2: 0<n<s,\ t<n<1}$ for each $0<j\leq k$.

Similarly, for $k<j\leq p_i$, $\mathcal{A}_{sh}$ computes $f_{r_i}(x_j)=x_u$ and $f_{r_i}(x_{j+1})=x_v$, which are true for some $0\leq u,v\leq p_i$. Since the slope of $f_{r_i}$, for $x>c$, is negative $v<u$. Then the image of $a_j$ is an union of atoms  -- $f_{r_i}(a_j)=\cup_{n=v}^u\cbra{a_n}$. Now $\mathcal{A}_{sh}$ proceeds to compute the sets $\mathcal{F}_j=\cbra{a_ja_n\in T^2: 0\leq n<v, u<n<1}$. Note that the sets $\left(\mathcal{F}_j\right)_{0\leq j\leq p_i}$ are disjoint and represents, symbolically, all the trajectories forbidden in terms of the atoms that are visited by each point under iterations of $f_{r_i}$.

Finally, with all the previous computations done (which is a finite number of computations), $\mathcal{A}_{sh}$ computes $$\mathcal{F}=\cup_{j=0}^{c_i+p_i+1}\cbra{\mathcal{F}_j}.$$

Note that $\mathcal{F}$ defines a Markov subshift $\Sigma_\mathcal{F}$, since every element in $\mathcal{F}$ has length $2$ and is a forbidden word in the subshift. 
From Theorem 6.1 of \cite{spandl2007computing} the computability follows. This computation can be done in a parallel simulation for each input $(r_i,p_i)$ given by the algorithm $\mathcal{A}_{sh}$ from Proposition \ref{seqr}. Since this is the same for any sequence outputted by $\mathcal{A}_{sh}$, the result  follows.
\end{proof}

Now we have the topological entropy for a collection of parameters $r_i$. But this collection is not dense in $[0,4]$ yet (each $r_i$ is at the center of a hyperbolic component). Since a dense sequence is needed for our purposes we will use this collection to find a dense set of parameters for which we know how to compute the entropy.

\begin{proposition}\label{sequence}
For each pair $(r_i,p_i)_{i}$ there exists an algorithm that enumerates a sequence $(d_k,h(r_i))_k$, such that $f_{d_k}^{p_i}$ has an attracting cycle of period $p_i$ and topological entropy $h(r_i)$.
\end{proposition}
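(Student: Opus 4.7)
The plan is to identify $r_i$ as the center of a maximal open interval $W_i \subset [0,4]$ on which every $f_r$ has an attracting cycle of exact period $p_i$, show that the endpoints of $W_i$ are computable, and enumerate rationals in $W_i$. Theorem~\ref{douady}(c) will guarantee the entropy is constant throughout $W_i$, hence equal to $h(r_i)$.

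First, I would locate $W_i$. Since $c=1/2$ is a critical point of $f_r$ for every $r$, and $c$ lies in the period-$p_i$ cycle of $f_{r_i}$, the chain rule gives $(f_{r_i}^{p_i})'(c)=0$. Setting $G(r,x)=f_r^{p_i}(x)-x$, we have $\partial_x G(r_i,c)=-1\neq 0$, so the implicit function theorem supplies a smooth curve $x(r)$, defined near $r_i$, with $x(r_i)=c$ and $f_r^{p_i}(x(r))=x(r)$. The multiplier $\mu(r)=(f_r^{p_i})'(x(r))$ is continuous with $\mu(r_i)=0$, hence $|\mu(r)|<1$ near $r_i$; moreover the $p_i$ iterates $f_r^k(x(r))$, $0\le k<p_i$, remain pairwise distinct in a neighborhood of $r_i$ by continuity, so the period stays exactly $p_i$. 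Let $W_i$ be the connected component containing $r_i$ of the open set of parameters where $f_r$ admits an attracting cycle of exact period $p_i$; then $W_i=(r_-,r_+)$ is an open interval.

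Second, I would show that $r_-$ and $r_+$ are computable. At each endpoint the attracting cycle becomes neutral, so $(r_\pm,x(r_\pm))$ solves $f_r^{p_i}(x)=x$ together with $(f_r^{p_i})'(x)=\pm 1$. Define
$$R(r)=\mathrm{Res}_x\!\bigl(f_r^{p_i}(x)-x,\,(f_r^{p_i})'(x)-1\bigr)\cdot \mathrm{Res}_x\!\bigl(f_r^{p_i}(x)-x,\,(f_r^{p_i})'(x)+1\bigr),$$
a polynomial in $r$ with integer coefficients that is effectively computable from $p_i$. By Theorem~\ref{keriko} its finitely many real roots in $[0,4]$ are uniformly computable. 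The key geometric observation is that $R$ has no zero strictly inside $W_i$: by hyperbolicity, the period-$p_i$ attractor has $|\mu|<1$ for every $r\in W_i$, while every other periodic orbit of $f_r$ of period dividing $p_i$ is repelling (multiplier of modulus strictly greater than $1$). Consequently $r_-$ and $r_+$ are, respectively, the largest root of $R$ below $r_i$ and the smallest root above it. Since $\mu(r_i)=0\neq\pm 1$, $r_i$ is not itself a root, so approximating $r_i$ and all real roots of $R$ to increasing precision eventually separates $r_i$ from each root and determines which side each lies on; the nearest roots on each side are then identified.

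Third, since $r_-<r_+$ are computable, an algorithm enumerates rationals $d_k\in(r_-,r_+)$ by producing sharper and sharper rational approximations of $r_\pm$ and accepting any rational certified to lie strictly between. Each such $d_k$ lies in $W_i$, so $f_{d_k}$ has an attracting cycle of period $p_i$, and by Theorem~\ref{douady}(c) we have $h(d_k)=h(r_i)$. The algorithm outputs $(d_k,h(r_i))_k$. The main obstacle lies in Step~2: real-number comparison is not decidable in general, and the proof hinges on the gap provided by $\mu(r_i)=0\neq\pm 1$, which guarantees that finite-precision comparisons of $r_i$ with each root terminate with a definite answer, together with the hyperbolicity argument ensuring that no stray root of $R$ lies between $r_i$ and the true endpoints $r_\pm$.
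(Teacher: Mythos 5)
Your argument is correct in substance but follows a genuinely different route from the paper. The paper's algorithm enumerates \emph{all} rationals $d_j\in[0,4]$, computes the roots of $f_{d_j}^{p_i}(x)-x$ for each, and semi-decides whether some root has multiplier in $(-1,1)$, outputting $d_j$ when the test succeeds; density and the entropy identification are then obtained by appealing to Theorem~\ref{douady}(c). You instead compute the two endpoints of the hyperbolic window $W_i$ containing $r_i$ as roots of an explicit resultant polynomial, and only then enumerate rationals certified to lie strictly inside $(r_-,r_+)$. Your version costs more machinery (resultants, the multiplier-$\pm 1$ characterization of window boundaries, and the fact that all cycles other than the attractor are repelling on $W_i$), but it buys a guarantee the paper's test does not actually provide: every $d_k$ you output provably lies in the \emph{same} component as $r_i$. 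The paper's test accepts any rational carrying an attracting periodic point of period \emph{dividing} $p_i$, which in general includes parameters from entirely different windows (e.g.\ for $p_i=6$, parameters in the period-$1$, $2$ or $3$ regions), so its concluding sentence ``therefore is in the same hyperbolic component as $f_{r_i}$'' is unjustified as written; your construction repairs exactly this point. Two small things you should tighten: the claim that every non-attracting cycle of period dividing $p_i$ has multiplier of modulus strictly greater than $1$ throughout $W_i$ is not a consequence of ``hyperbolicity'' per se but of Singer's theorem (negative Schwarzian derivative: any non-repelling cycle must attract the critical point or a boundary point, and on $W_i$ the critical point is already captured by the period-$p_i$ attractor while $0$ is repelling), and you should note that $R\not\equiv 0$ (which holds since, e.g., $f_4$ has only repelling cycles) so that Theorem~\ref{keriko} applies to finitely many roots.
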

\begin{proof}
We describe an algorithm $\mathcal{A}_s$ that enumerates the desired sequence. As in Proposition \ref{seqr}, we invoke an algorithm $\mathcal{K}_s$ which computes uniformly the roots $x_n$ of the polynomial $f_{r_i}^{p_i}(x)-x$, for every $(r_i,p_i)$. Recall that rational numbers are enumerable, let $\mathcal{E}$ be the algorithm that enumerates the sequence $d_j$, with $d_j\in\mathbb{Q}\cap[0,4]$.

Upon input $(r_i,p_i)$, $\mathcal{A}_s$ simulates $\mathcal{E}$ and for every one of its outputs $d_j$, $\mathcal{A}_s$ starts, in parallel, a simulation of algorithm $\mathcal{K}_s$ with input $(d_j,p_i)$ for all $j\geq0$. For every output $x_j$ of $\mathcal{K}_s$, $\mathcal{A}_s$ computes $\vert{{f_{d_j}^{p_i}}'(x_j)}\vert$ and semi-decides the inclusion ${f_{d_j}^{p_i}}'(x_j)\in(-1,1)$. If this condition holds, we output $d_j$. By Theorem \ref{diffcriterionstability}, $f_{d_j}^{p_i}$ has an attracting cycle of period $p_i$ and therefore is in the same hyperbolic component as $f_{r_i}$, as desired. Moreover by Theorem \ref{douady}(c) we have that $h(r_i)=h(d_k)$ for all such $d_k$, as it was to be shown. 
\end{proof}

We are now equipped to prove that given any parameter $r$ as input, there is an algorithmic procedure which outputs an arbitrarily good approximation of $h(r)$ when provided with arbitrarily good approximations of $r$. The details are as follows: 

\begin{proof}[Proof of Theorem \ref{entropyquadraticfamily}]
We describe and algorithm $\mathcal{A}_h$ which computes the topological entropy $h(r)$ for any given parameter $r\in[0,4]$ at any desired precision $\epsilon$. Note that, since $r\in[0,4]$ is a real number, we assumed that better approximations of $r$ are provided whenever they are required.
On input $(r,\epsilon)$, $\mathcal{A}_h$ starts a parallel simulation of algorithm $\mathcal{A}_s$ from Proposition \ref{sequence}. For every output $(d_i,h(d_i))$ of $\mathcal{A}_s$, $\mathcal{A}_h$ checks in a parallel way with better and better approximations of $r$ whether $d_i<r$ or $d_i>r$. Whenever one of these conditions is verified, $\mathcal{A}_h$ stores $(d_i,h(d_i))$ in sequences $(d_u,h(d_u))_u$, if $d_i>r$, or $(d_l,h(d_l))_l$ if $d_i<r$. For both sequences $\mathcal{A}_h$ can relabel every term to ensure that $(d_u)_u$ is decreasing and $(d_l)_l$ is increasing.  As the sequences $(d_u,h(d_u))_u$ and $(d_l,h(d_l))_l$ start to be enumerated, $\mathcal{A}_h$ checks whether the condition
$$h(d_u)-h(d_l)<\epsilon-\frac\epsilon{10}$$
is satisfied, for some $u,l$. Once this is achieved, $\mathcal{A}_h$ halts and outputs the value $\left(h(d_u)+h(d_l)\right)/2$, which is the desired approximation of $h(r)$.
\end{proof}


%
%

\bibliographystyle{abbrv}
\bibliography{ref,biblio}
\end{document}